\newenvironment{myproof}[1][\proofname]{\proof[#1]\mbox{}}{\endproof}
\newtheorem{axi}{Axiom}[section]
\newtheorem{theorem}[axi]{Theorem}
\newtheorem{lema}[axi]{Lemma}
\newtheorem{corolary}[axi]{Corollary}
\newtheorem{definition}[axi]{Definition}
\newtheorem{observation}[axi]{Remark}
\newtheorem{conjecture}[axi]{Conjecture}
\begin{document}

\setcounter{page}{01}
\title{\vspace{-1in}\parbox{\linewidth}{\footnotesize\noindent
}
 \vspace{\bigskipamount} \\
 The Caffarelli-Kohn-Nirenberg Inequalities on Metric Measure Spaces
   \renewcommand{\thefootnote}{}
\footnotetext{2000 {\it Mathematics Subject Classification }: 53C21; 35R01; 31C12; 53C24 \\ \hspace*{2ex}\ \ \
Key words and phrases:
CKN inequalities, metric measure spaces, Riemannian manifolds.
 }
}
\date{}
\author{ Willian Isao Tokura\thanks{Supported by Capes} ,  Levi Adriano\thanks{Supported in part by FAPEG} and  Changyu Xia}
\maketitle



{\Large}

\begin{abstract}
In this paper, we prove that if a metric measure space satisfies the
volume doubling condition and the Caffarelli-Kohn-Nirenberg
inequality with same exponent $n(n\geq2)$, then it has exactly
$n$-dimensional volume growth. As application, we obtain geometric
and topological properties of Alexandrov space, Riemannian manifold
and Finsler space which support a Caffarelli-Kohn-Nirenberg
inequality.
\end{abstract}

\begin{section}{Introduction}

\hspace{0,6cm}Let $\mathbb{R}^{n}$ be the Euclidean space, denote by
$dx$ the volume element associated with canonical metric $g_{0}$ of
$\mathbb{R}^{n}$ and consider
$\mathcal{C}_{0}^{\infty}(\mathbb{R}^{n})$ the space of the smooth
functions in $\mathbb{R}^{n}$ with compact support.

Among a much more general family of inequalities, Caffarelli, Kohn
and Nirenberg proved the following result

 \begin{theorem}\label{eq:0}(\cite{Caff})Let $n\geq2$ and $p,q,r, \alpha, \beta,\gamma, \sigma, a$ be fixed real numbers satisfying:

 $$p,q\geq1, r>0, 0\leq a\leq1$$
 $$\frac{1}{p}+\frac{\alpha}{n}, \frac{1}{q}+\frac{\beta}{n},\frac{1}{r}+\frac{\gamma}{n}>0,$$
where
$$\gamma=a\sigma+(1-a)\beta,$$
$$\frac{1}{r}+\frac{\gamma}{n}=a\left(\frac{1}{p}+\frac{\alpha-1}{n}\right)+(1-a)\left(\frac{1}{q}+\frac{\beta}{n}\right)$$
and

$$0\leq \alpha-\sigma \hspace{0,2cm}\text{if}\hspace{0,2cm} a>0 \hspace{0,2cm}\text{and}$$
$$\alpha-\sigma \leq1 \hspace{0,2cm}\text{if}\hspace{0,2cm} a>0 \hspace{0,2cm}\text{and}\hspace{0,2cm}\frac{1}{p}+\frac{\alpha-1}{n}=\frac{1}{r}+\frac{\gamma}{n}.$$
There exists a positive constant $C=C(n,p,q,r,\alpha,\beta,\gamma)$
such that the following inequality holds for all $
u\in\mathcal{C}_{0}^{\infty}(\mathbb{R}^{n})$

\begin{equation}\label{eq:29}\left(\int_{\mathbb{R}^{n}}|x|^{\gamma r}|u|^{r}dx\right)^{\frac{1}{r}}\leq
C\left(\int_{\mathbb{R}^{n}}|x|^{\alpha p}|\nabla
u|^{p}dx\right)^{\frac{a}{p}}\left(\int_{\mathbb{R}^{n}}|x|^{\beta
q}|u|^{q}dx\right)^{\frac{1-a}{q}}.
\end{equation}
\end{theorem}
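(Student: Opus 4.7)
The plan is to prove the inequality in three steps, exploiting its scale-invariance: first handle the two endpoints $a=0$ and $a=1$, then interpolate by H\"older's inequality. The scaling relations in the hypotheses are precisely the conditions under which both sides transform identically under the dilation $u(x)\mapsto u(\lambda x)$, so it suffices to work with a single normalization.

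\emph{Endpoint $a=0$.} Here $\gamma=\beta$ and the scaling identity forces $1/r=1/q$, so $r=q$ and the claimed inequality holds as an equality with $C=1$.

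\emph{Endpoint $a=1$ (the weighted Sobolev inequality).} Here $\gamma=\sigma$ and the inequality reduces to
\[
\left(\int_{\mathbb{R}^{n}}|x|^{\sigma r}|u|^{r}dx\right)^{1/r}\le C\left(\int_{\mathbb{R}^{n}}|x|^{\alpha p}|\nabla u|^{p}dx\right)^{1/p},
\]
with $\frac{1}{r}+\frac{\sigma}{n}=\frac{1}{p}+\frac{\alpha-1}{n}$ and $0\le\alpha-\sigma\le1$. I would first reduce to radial non-increasing $u$ by Schwarz symmetrization, using the fact that the radial weights $|x|^{\sigma r}$ and $|x|^{\alpha p}$ behave well under rearrangement, and then prove the one-dimensional inequality
\[
\left(\int_{0}^{\infty}t^{\sigma r+n-1}f(t)^{r}dt\right)^{1/r}\le C\left(\int_{0}^{\infty}t^{\alpha p+n-1}(-f'(t))^{p}dt\right)^{1/p}
\]
for monotone $f\ge0$. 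This is obtained by writing $t^{\sigma r+n-1}=\frac{1}{\sigma r+n}\frac{d}{dt}(t^{\sigma r+n})$, integrating by parts, and applying H\"older's inequality with exponents $p,p'$; the hypothesis $\alpha-\sigma\le1$ is exactly what makes the residual $L^{p'}$ integral absorbable back into the left-hand side.

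\emph{Interpolation, $0<a<1$.} Using $\gamma=a\sigma+(1-a)\beta$ we factor
\[
|x|^{\gamma r}|u|^{r}=\bigl(|x|^{\sigma}|u|\bigr)^{ar}\bigl(|x|^{\beta}|u|\bigr)^{(1-a)r}
\]
and apply H\"older's inequality with conjugate exponents $\lambda=q/(q-(1-a)r)$ and $\lambda'=q/((1-a)r)$. Setting $r_{1}:=ar\lambda=arq/(q-(1-a)r)$, this yields
\[
\int_{\mathbb{R}^{n}}|x|^{\gamma r}|u|^{r}dx\le\left(\int_{\mathbb{R}^{n}}|x|^{\sigma r_{1}}|u|^{r_{1}}dx\right)^{1/\lambda}\left(\int_{\mathbb{R}^{n}}|x|^{\beta q}|u|^{q}dx\right)^{(1-a)r/q}.
\]
A direct computation from the CKN scaling identity yields $1/r_{1}+\sigma/n=1/p+(\alpha-1)/n$, so the weighted Sobolev inequality of the previous step applies and bounds the first factor by $\bigl(\int|x|^{\alpha p}|\nabla u|^{p}dx\bigr)^{ar/p}$. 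Taking $r$-th roots gives the claim.

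The main obstacle is the weighted Sobolev endpoint $a=1$. Its full parameter range is delicate: the symmetrization step needs extra care when the gradient itself is weighted (i.e.\ $\alpha\ne0$), and the constraint $\alpha-\sigma\le1$ in the critical case is exactly what closes the one-dimensional H\"older estimate. Once this endpoint is in place, the interpolation step is essentially bookkeeping.
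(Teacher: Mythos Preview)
The paper does not actually prove this theorem: it is quoted from \cite{Caff} as a background result, with no argument given. There is therefore no proof in the paper to compare your attempt against, and I comment only on the proposal itself.

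Your overall architecture (settle the $a=1$ weighted Sobolev endpoint, then H\"older-interpolate to reach $0<a<1$) is indeed the strategy of the original Caffarelli--Kohn--Nirenberg paper. The genuine gap is in the $a=1$ endpoint. Your plan is to pass to the radial decreasing rearrangement $u^{*}$ and then prove a one-dimensional inequality, but Schwarz symmetrization does \emph{not} in general decrease the weighted Dirichlet energy $\int_{\mathbb{R}^{n}}|x|^{\alpha p}|\nabla u|^{p}\,dx$ when $\alpha\ne0$: the P\'olya--Szeg\H{o} principle applies to the unweighted gradient, and its analogue for power weights $|x|^{\alpha p}$ is false. (On the other side, the Hardy--Littlewood rearrangement inequality only guarantees that $\int|x|^{\sigma r}|u^{*}|^{r}\ge\int|x|^{\sigma r}|u|^{r}$ when the weight $|x|^{\sigma r}$ is radially non-increasing, i.e.\ $\sigma r\le0$; part of the parameter range violates this too.) You flag this as ``needing extra care'' but do not resolve it, so the argument does not close.

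For reference, the original proof avoids symmetrization entirely: the $a=1$ case is itself obtained by H\"older-interpolating between the unweighted Sobolev inequality ($\alpha=\sigma=0$) and a Hardy-type inequality coming from integrating $|u|^{r}$ against $\operatorname{div}(x|x|^{-b})=(n-b)|x|^{-b}$ and applying H\"older. That direct route also covers the full parameter range, whereas your interpolation for $0<a<1$ always lands on an $a=1$ endpoint with the critical scaling (your $r_{1}$ satisfies $1/r_{1}+\sigma/n=1/p+(\alpha-1)/n$ identically), forcing the constraint $\alpha-\sigma\le1$ even though the theorem does not assume it outside the degenerate case.
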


Let us denote by $C_{opt}(\mathbb{R}^{n})$ the best constant for
this inequality, that is,

\begin{equation}C_{opt}(\mathbb{R}^{n})^{-1}=\inf_{u\in\mathcal{C}_{0}^{\infty}(\mathbb{R}^{n})-\{0\}}\frac{{\left(\int_{\mathbb{R}^{n}}|x|^{\alpha p}|\nabla
u|^{p}dx\right)^{\frac{a}{p}}\left(\int_{\mathbb{R}^{n}}|x|^{\beta
q}|u|^{q}dx\right)^{\frac{1-a}{q}}}}{\left(\int_{\mathbb{R}^{n}}|x|^{\gamma
r}|u|^{r}dx\right)^{\frac{1}{r}}}.\nonumber
\end{equation}

Recently, in \cite{lam} the authors consider the change of
exponent in \eqref{eq:29}

\begin{equation}\alpha=-\frac{\mu}{p},\hspace{0,3cm}\beta=-\frac{\theta}{q},\hspace{0,3cm}\gamma=-\frac{s}{r}\nonumber
\end{equation}
and get the following result:

\begin{theorem}\label{eq:30}(\cite{lam}, Theorem $1.2$)Let $n\geq2$ and $p,q,\mu$ be fixed real numbers satisfying
\begin{equation}
\label{eq1} 1<p<p+\mu<n,\hspace{0,1cm} 1\leq
q<\frac{p(q-1)}{p-1}<\frac{np}{n-p},
\end{equation}
and let $r$, $\theta$, $s$ and $a$ given by
\begin{equation}
\label{eq2}
r=\frac{p(q-1)}{p-1},\hspace{0,1cm}\theta=s=\frac{n\mu}{n-p},
\hspace{0,1cm}a=\frac{n(q-p)}{(q-1)[np-q(n-p)]},
\end{equation}
Then, with $\nu=np-q(n-p),$
\begin{equation}C_{opt}(\mathbb{R}^{n})=\left(\frac{n-p}{n-p-\mu}\right)^{\frac{1}{r}+\frac{p-1}{p}-\frac{1-a}{q}-\frac{(p-1)(1-a)}{p}}\left(\frac{q-p}{p\sqrt{\pi}}\right)^{a}
\left(\frac{pq}{n(q-p)}\right)^{\frac{a}{p}}\left(\frac{\nu}{pq}\right)^{\frac{1}{r}}\left(\frac{\Gamma(q\frac{p-1}{q-p})\Gamma(\frac{n}{2}+1)}{\Gamma(\frac{p-1}{p}\frac{\delta}{q-p})\Gamma(n\frac{p-1}{p}+1)}\right)^{\frac{a}{n}},\nonumber
\end{equation}
and all minimal functions are of the form
\begin{equation}V_{0}(x)=A(1+B|x|^{\frac{n-p-\mu}{n-p}\frac{p}{p-1}})^{-\frac{p-1}{q-p}},\hspace{0,3cm} A\in\mathbb{R},
B>0.\nonumber
\end{equation}
\end{theorem}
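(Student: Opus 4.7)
My plan is to reduce the weighted CKN inequality to the sharp classical Gagliardo-Nirenberg inequality of Del Pino and Dolbeault via a two-stage process: first show that the infimum defining $C_{opt}(\mathbb{R}^n)^{-1}$ is attained on radial, non-increasing profiles, then absorb the radial weights into the Jacobian of an explicit power substitution. For the reduction to radial functions, I would argue---either by weighted Schwarz rearrangement combined with the Catrina-Wang weighted P\'olya-Szeg\H{o} principle, or by a direct mass-transport argument in the style of Cordero-Erausquin, Nazaret and Villani---that because the three weights $|x|^{-\mu},|x|^{-\theta},|x|^{-s}$ are radial and the parameters \eqref{eq1}--\eqref{eq2} sit in the subcritical Gagliardo-Nirenberg regime, every minimizing sequence can be replaced by a radial, radially decreasing one without increasing the quotient.

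For a radial profile $u=u(r)$ the ratio becomes, up to a factor of $\omega_{n-1}$,
$$\frac{\left(\int_0^\infty r^{n-1-\mu}|u'|^p\,dr\right)^{a/p}\left(\int_0^\infty r^{n-1-\theta}|u|^q\,dr\right)^{(1-a)/q}}{\left(\int_0^\infty r^{n-1-s}|u|^r\,dr\right)^{1/r}}.$$
I would then introduce the substitution $\rho=r^{(n-p-\mu)/(n-p)}$, whose exponent is tuned so that the Del Pino-Dolbeault extremal in the new variable $\rho$ pulls back exactly to the profile $V_0(x)$. A short computation using the specific values $\theta=s=n\mu/(n-p)$ from \eqref{eq2} shows that this substitution simultaneously absorbs all three weights into Jacobian factors; writing $v(\rho)=u(r)$, the three integrals become constant multiples of $\int_0^\infty |v'|^p\rho^{n-1}d\rho$, $\int_0^\infty |v|^q\rho^{n-1}d\rho$, and $\int_0^\infty |v|^r\rho^{n-1}d\rho$. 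In other words, the quotient reduces to the classical radial Gagliardo-Nirenberg quotient on $\mathbb{R}^n$ with exponent relation $r=p(q-1)/(p-1)$. The Del Pino-Dolbeault theorem then supplies the sharp constant $K$ for $\|v\|_{L^r}\le K\|\nabla v\|_{L^p}^a\|v\|_{L^q}^{1-a}$ together with its extremals $v_0(y)=A(1+B|y|^{p/(p-1)})^{-(p-1)/(q-p)}$. Pulling $v_0$ back produces $V_0(x)$, and tracking the Jacobian factors together with $\omega_{n-1}=2\pi^{n/2}/\Gamma(n/2)$ and the beta-function integrals that evaluate $\|v_0\|_{L^q}$ reassembles the ratio of Gamma functions appearing in $C_{opt}(\mathbb{R}^n)$.

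The main obstacle is the radial reduction. The weighted P\'olya-Szeg\H{o} principle is not universally valid in the presence of a gradient weight $|x|^{-\mu}$, and the Catrina-Wang symmetry-breaking phenomenon shows that radial extremals can fail outside a sharp region of parameters. I therefore expect most of the real work to go into verifying that the range \eqref{eq1}--\eqref{eq2} lies inside the radial-symmetry region, or, where this fails, replacing the symmetrization argument by an optimal-transport proof in which $V_0$ enters as the target measure of the Brenier map between $|u|^q|x|^{-\theta}dx$ and $V_0^q|x|^{-\theta}dx$. Once radiality is secured, the remainder of the argument is essentially bookkeeping of Jacobian factors and standard beta/Gamma identities.
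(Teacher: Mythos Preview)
The paper does not give its own proof of this statement: Theorem~\ref{eq:30} is quoted verbatim from \cite{lam} (Lam--Lu, Theorem~1.2) and used as a black box throughout. So there is no proof in the paper to compare your proposal against.

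That said, your outline is essentially the standard route to results of this type, and the key computation is correct: the power substitution $\rho=r^{(n-p-\mu)/(n-p)}$ together with the specific choice $\theta=s=n\mu/(n-p)$ does transform all three weighted radial integrals into the unweighted ones with Jacobian $\rho^{n-1}\,d\rho$, reducing the problem to the Del~Pino--Dolbeault sharp Gagliardo--Nirenberg inequality with exponent $r=p(q-1)/(p-1)$, whose extremals pull back to $V_0$. You have also correctly located the only nontrivial step, namely the reduction to radial competitors in the presence of the gradient weight $|x|^{-\mu}$; this is exactly where Catrina--Wang symmetry breaking could bite, and it is the part of the argument that requires genuine justification rather than bookkeeping. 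If you want to turn this into a self-contained proof you should either (i) verify explicitly that the parameter range \eqref{eq1}--\eqref{eq2} lies in the symmetry region (the condition $0<\mu<n-p$ and the subcritical GN scaling are favourable here), or (ii) bypass symmetrization altogether via the Cordero-Erausquin--Nazaret--Villani transport argument applied directly to the weighted functional, which produces $V_0$ as extremal without passing through radial reduction.
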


In \cite{Adr, Adr2, doCarmo, Heb, Led, Xia1, Xia}, the authors consider the study of Riemannian
manifolds with non-negative Ricci curvature supporting some of the
particular classes of CKN. In particular, in \cite{Adr, Adr2, doCarmo, Xia1, Xia}, the authors
obtain some metric and topological rigidity results.

In the case of CKN inequality type, Xia in \cite{Xia4} considered
the case
\begin{equation}q=\frac{p(r-1)}{p-1},\hspace{0,2cm} 1<p<r,\hspace{0,2cm}
n-\theta<\left(1+\frac{\mu}{p}-\frac{\theta}{p}\right),\hspace{0,2cm}s=\frac{\mu}{p}+1+\frac{\theta(p-1)}{p}\nonumber
\end{equation}
and obtained the extremal functions, which are
$u(x)=(\lambda+|x|^{1+\frac{\mu}{p}-\frac{\theta}{p}})^{-\frac{p-1}{r-p}}$.
Furthermore,  metrical and topological theorems were obtained.

For metric measure spaces, Kristály and Ohta in \cite{Kris} and
\cite{Kris1}, study metric measure spaces supporting the
Gagliardo-Nirenberg inequality and a particular class of
Caffarelli-Kohn-Nirenberg and obtains that the metric space has
exactly the $n$-dimensional volume growth, as application they get
some rigidity theorems on Finsler geometry.


In this paper, assuming the hypotheses of Theorem \ref{eq:30} we
extend the main result of Kristály and Ohta in \cite{Kris} for class
of Caffarelli-Kohn-Nirenberg inequality and obtain some rigidity
result in Alexandrov, Riemannian and Finsler geometry. We estate our main result in the sequel.

\begin{theorem}\label{eq:45}Consider $n$, $a$, $p$, $q$, $r$, $s$, $\mu$, $\theta$ as in Theorem
\ref{eq:30}. Let (X,d,\textsf{m}) be a proper metric measure space
and assume that for some $x_{0}\in X$, $C\geq
C_{opt}(\mathbb{R}^{n})$, $C_{0}\geq 1$, the
Caffarelli-Kohn-Nirenberg \eqref{eq:31} hold on $X$ with the
following conditions

\begin{equation}\label{eq:32}\frac{\textsf{m}(B_{R}(x))}{\textsf{m}(B_{\rho}(x))}\leq
C_{0}\left(\frac{R}{\rho}\right)^{n},\hspace{0,3cm}\forall x\in
X,\hspace{0,2cm} e\hspace{0,2cm} 0<\rho<R
\end{equation}
and
\begin{equation}\label{eq:33}\liminf_{\rho\rightarrow0}\frac{\textsf{m}(B_{\rho}(x_{0}))}{\textsf{m}_{E}(\mathbb{B}_{\rho}(0))}=1,
\end{equation}
where $B_{\rho}(x):=\{y\in X: d(x,y)<\rho\}$,
$\mathbb{B}_{\rho}(0):=\{x\in \mathbb{R}^{n}: |x|<\rho\}$ and
$\textsf{m}_{E}$ is the $n$-dimensional Lebesgue measure. Then , we
have

\begin{equation}\label{eq:54}\textsf{m}(B_{\rho}(x))\geq
C_{0}^{-1}\left(\frac{C_{opt}(\mathbb{R}^{n})}{C}\right)^{\frac{n}{a}}\textsf{m}_{E}(\mathbb{B}_{\rho}(0)),\hspace{0,2cm}
\forall \rho>0, \hspace{0,2cm} x\in X.
\end{equation}
In particular
\begin{equation}
C_{0}^{-1}\left(\frac{C_{opt}(\mathbb{R}^{n})}{C}\right)^{\frac{n}{a}}w_{n}\rho^{n}\leq
\textsf{m}(B_{\rho}(x_{0}))\leq C_{0}w_{n}\rho^{n},\nonumber
\end{equation}
for all $\rho>0$, where $w_{n}$ denotes the volume of the unit ball
in $\mathbb{R}^{n}$.
\end{theorem}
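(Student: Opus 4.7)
\begin{myproof}[Proof plan]
My plan is to take the Euclidean extremal $V_0$ of Theorem \ref{eq:30} as a test function in the CKN inequality on $X$, reduce the resulting comparison to a one-dimensional problem in $V_x(t):=\textsf{m}(B_t(x))$, and exploit the exact equality achieved by $V_0$ on $\mathbb{R}^n$ to extract a lower bound on the volume ratio $f_x(t):=V_x(t)/(\omega_n t^n)$.

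\emph{Setup and reduction to one dimension.} For $\rho>0$ and $x\in X$ I take $u_{\rho,x}(y):=V_0(d(x,y)/\rho)$, which admits $|\nabla u_{\rho,x}|(y)=\rho^{-1}|V_0'|(d(x,y)/\rho)$ as an upper gradient by the $1$-Lipschitz property of $d(x,\cdot)$. The layer-cake identity
\[
\int_X G(d(x,\cdot))\,d\textsf{m}=\int_0^\infty G(t)\,dV_x(t)
\]
applied to each integral in the CKN inequality, together with the substitution $t=\rho s$, gives for $i\in\{r,p,q\}$
\[
\frac{I_i^X(\rho)}{I_i^E(\rho)}\;=\;\frac{\int_0^\infty W_i(s)\,f_x(\rho s)\,ds}{\int_0^\infty W_i(s)\,ds}\;=:\;\mathbb{E}_i^\rho[f_x],
\]
with $W_i\geq 0$ explicit in terms of $V_0,V_0'$ and the Euclidean weights. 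Combining the hypothesized CKN on $X$ with the Euclidean equality attained by $V_0$ (Theorem \ref{eq:30}) yields, for every $\rho>0$,
\[
\bigl(\mathbb{E}_r^\rho[f_x]\bigr)^{1/r}\;\leq\;\frac{C}{C_{opt}(\mathbb{R}^n)}\,\bigl(\mathbb{E}_p^\rho[f_x]\bigr)^{a/p}\bigl(\mathbb{E}_q^\rho[f_x]\bigr)^{(1-a)/q}.
\]

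\emph{Large-scale limit at $x_0$.} Applying \eqref{eq:32} at $x_0$ and letting the smaller radius tend to $0$ gives, by \eqref{eq:33}, the uniform bound $f_{x_0}\leq C_0$, which is the announced upper estimate $\textsf{m}(B_\rho(x_0))\leq C_0\omega_n\rho^n$. Let $L_\infty:=\liminf_{R\to\infty}f_{x_0}(R)$ and let $\rho_k\to\infty$ realize it. By the uniform bound $f_{x_0}\leq C_0$, a diagonal extraction produces a sub-subsequence along which $f_{x_0}(\rho_k s)$ converges for every $s$ in a countable dense set; refining $\{\rho_k\}$ jointly over countably many scale factors $\lambda_j$ so that $f_{x_0}(\lambda_j\rho_k)\to L_\infty$ for each $j$, and invoking doubling to pass to all $s$, the pointwise limit $g(s)$ is forced to be constant equal to $L_\infty$. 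Dominated convergence then gives $\mathbb{E}_i^{\rho_k}[f_{x_0}]\to L_\infty$ for $i\in\{r,p,q\}$, and passing to the limit in the ratio inequality together with the algebraic identity $\tfrac{1}{r}-\tfrac{a}{p}-\tfrac{1-a}{q}=-\tfrac{a}{n}$ (verified directly from the exponents in Theorem \ref{eq:30}) rearranges to
\[
L_\infty\;\geq\;\bigl(C_{opt}(\mathbb{R}^n)/C\bigr)^{n/a}.
\]
A final application of doubling, $f_{x_0}(R)\geq C_0^{-1}\liminf_{R'\to\infty}f_{x_0}(R')=C_0^{-1}L_\infty$, delivers \eqref{eq:54} at $x=x_0$.

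\emph{Extension to arbitrary $x$ and main obstacle.} Running the same argument with centre at arbitrary $x\in X$ gives the same ratio inequality for $f_x$; the uniform upper bound $f_x\leq C_0$ needed for dominated convergence is obtained by combining doubling at $x$ with the comparison $V_x(R+d(x,x_0))\geq V_{x_0}(R)$ and the bound on $f_{x_0}$ just proven. The same limit argument then gives $\liminf_{R\to\infty}f_x(R)\geq(C_{opt}(\mathbb{R}^n)/C)^{n/a}$, and a final doubling step at $x$ yields \eqref{eq:54} in full generality. The delicate step is the joint scale invariance of the subsequential limit $g$ in the argument at $x_0$: one must choose $\{\rho_k\}$ to realize the liminf jointly across countably many scale factors and then use doubling to interpolate, which is what pins the three distinct averages $\mathbb{E}_i^\rho[f_{x_0}]$ to the common value $L_\infty$. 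Preserving the sharp constant $C_0^{-1}$ rather than a higher power of $C_0$ also depends on applying doubling exactly once after the limit is taken.
\end{myproof}
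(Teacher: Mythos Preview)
Your plan has a genuine gap at the step you yourself flag as ``delicate'': the claim that one can refine $\{\rho_k\}$ so that $f_{x_0}(\lambda_j\rho_k)\to L_\infty$ for \emph{every} scale factor $\lambda_j$ is unjustified. Choosing $\rho_k$ to realize the liminf at scale $1$ gives no control over the subsequential limits at scale $\lambda\neq 1$; under the doubling hypothesis with $C_0>1$ the ratio $f_{x_0}$ may oscillate between $L_\infty$ and $C_0 L_\infty$ along geometric scales, so the pointwise limit $g(s)$ you extract satisfies only $L_\infty\le g(s)\le C_0 L_\infty$ and need not be constant. Without $g\equiv L_\infty$ the three weighted averages $\mathbb{E}_r[g],\mathbb{E}_p[g],\mathbb{E}_q[g]$ can differ, and the algebraic identity $\tfrac1r-\tfrac a p-\tfrac{1-a}{q}=-\tfrac a n$ no longer collapses the ratio inequality to a bound on a single power of $L_\infty$; at best you get $L_\infty\ge C_0^{-\kappa}(C_{opt}/C)^{n/a}$ with an extra positive power $\kappa$ of $C_0$, which after the final doubling step is strictly weaker than \eqref{eq:54}. (A secondary point: the CKN weights are $d(\cdot,x_0)$--based, so you cannot rerun the test-function computation centred at a general $x$; the extension to all $x$ must go through the observation that $\liminf_{R\to\infty}f_x(R)=\liminf_{R\to\infty}f_{x_0}(R)$ and one doubling at $x$.)

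The paper avoids this limit-identification problem altogether. It uses the \emph{same} family of extremals but parametrizes by $\lambda$ in $u_\lambda=(\lambda+d^{\,p(n-p-\mu)/[(n-p)(p-1)]})^{-(p-1)/(q-p)}$ and observes that the three CKN integrals are, up to constants, $-F'(\lambda)$, $F(\lambda)$, and $\tfrac{r(p-1)-(q-p)}{q-p}F(\lambda)+\lambda F'(\lambda)$ for $F(\lambda)=\tfrac{q-p}{r(p-1)-(q-p)}\int_X d^{\gamma r}(\lambda+d^{\cdots})^{-q(p-1)/(q-p)}\,d\textsf m$. Plugging into CKN therefore yields a first-order differential \emph{inequality} for $F$; the Euclidean counterpart $G$ satisfies the corresponding \emph{equality}, and an ODE comparison (monotonicity of $t\mapsto t^{p/(ar)}+t\lambda\Gamma H_0^{p(1-a)/(aq)}$) gives $F(\lambda)\ge (C_{opt}/C)^{n/a}G(\lambda)$ for all $\lambda>0$. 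Rewriting via layer-cake as $\int_0^\infty[\textsf m(B_h(x_0))-d_1\omega_n h^n]\psi_\lambda(h)\,dh\ge 0$ and letting $\lambda\to\infty$ then forces $\limsup_{R\to\infty}\textsf m(B_R(x_0))/(\omega_n R^n)\ge d_1$, from which doubling gives \eqref{eq:54} with the sharp $C_0^{-1}$. The differential structure linking the three integrals is exactly what replaces your unavailable ``common limit''.
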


\noindent In the above theorem, we consider in $(X,d)$ the Borel
measure $\textsf{m}$ such that $0<\textsf{m}(U)<\infty$ for all not
empty open set $U\subset X$ and for fixed $x_{0}\in X$ and $C>0$ we
consider the Caffarelli-Kohn-Nirenberg inequality on
$(X,d,\textsf{m})$ of the form, $\forall u\in Lip_{0}(X)$

\begin{equation}\label{eq:31}\left(\int_{X}d(x,x_{0})^{\gamma r}|u|^{r}d\textsf{m}(x)\right)^{\frac{1}{r}}\leq
C\left(\int_{X}d(x,x_{0})^{\alpha p}|D
u|^{p}d\textsf{m}(x)\right)^{\frac{a}{p}}\left(\int_{X}d(x,x_{0})^{\beta
q}|u|^{q}d\textsf{m}(x)\right)^{\frac{1-a}{q}}
\end{equation}

\noindent where $Lip_{0}(X)$ denote the space of Lipschitz functions
with compact support and

\begin{equation}|Du|(x):=\limsup_{y\rightarrow x}\frac{|u(x)-u(y)|}{d(x,y)}\nonumber
\end{equation}
is the local Lipschitz constant of $u$ at $x$.

\begin{observation}As pointed out in [\cite{Kris}, Remark 1.3(2)] if
$(X,d,\textsf{m})$ satisfies the volume doubling condition
\begin{equation}\textsf{m}(B_{2\rho}(x))\leq\Lambda\textsf{m}(B_{\rho}(x)),\hspace{0,2cm}
\text{for some} \hspace{0,2cm}\Lambda\geq1,\hspace{0,2cm} \text{and
all}\hspace{0,2cm} x\in X, \hspace{0,2cm}\rho>0\nonumber
\end{equation}
then it is easy to get that the volume condition \eqref{eq:32} is
satisfied with, e.g., $n\geq\log_{2}\Lambda$ and $C=1$. Thus
\eqref{eq:32} can be interpreted as the volume doubling condition
with the explicit exponent $n$.
\end{observation}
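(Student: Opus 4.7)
The plan is to derive \eqref{eq:32} from the doubling hypothesis by a dyadic iteration. Fix $x\in X$ and $0<\rho<R$, and let $k$ be the unique non-negative integer with $2^{k-1}\rho\leq R<2^{k}\rho$, so that $k-1\leq \log_{2}(R/\rho)<k$. Since $B_{R}(x)\subset B_{2^{k}\rho}(x)$, monotonicity of $\textsf{m}$ yields
\[
\textsf{m}(B_{R}(x))\leq \textsf{m}(B_{2^{k}\rho}(x)),
\]
and applying the doubling inequality $k$ consecutive times to the right-hand side bounds it by $\Lambda^{k}\textsf{m}(B_{\rho}(x))$. That is the main computational step.

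I would then convert the integer exponent $k$ into a polynomial bound in $R/\rho$. Using $k<1+\log_{2}(R/\rho)$,
\[
\Lambda^{k}<\Lambda\cdot\Lambda^{\log_{2}(R/\rho)}=\Lambda\,(R/\rho)^{\log_{2}\Lambda}\leq \Lambda\,(R/\rho)^{n},
\]
where the last inequality uses $R/\rho\geq 1$ together with $n\geq \log_{2}\Lambda$. Chaining the two displays produces
\[
\frac{\textsf{m}(B_{R}(x))}{\textsf{m}(B_{\rho}(x))}\leq \Lambda\,\left(\frac{R}{\rho}\right)^{n},
\]
which is \eqref{eq:32}.

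The only mildly subtle point is the value of the constant appearing in front of $(R/\rho)^{n}$. The dyadic argument delivers $C_{0}=\Lambda$, not $C_{0}=1$: indeed, letting $R/\rho\to 1^{+}$ one sees that no non-trivial inequality of the form $\textsf{m}(B_{R})/\textsf{m}(B_{\rho})\leq (R/\rho)^{n}$ can hold universally, so the "$C=1$" appearing in the remark is best read as a harmless abuse and replaced by $C_{0}=\Lambda\geq 1$. This rephrasing is immaterial for the application: Theorem \ref{eq:45} is stated for arbitrary $C_{0}\geq 1$, so the main result applies to any metric measure space satisfying the doubling condition with any $\Lambda\geq 1$, for the choice $n=\log_{2}\Lambda$ (or any larger value).
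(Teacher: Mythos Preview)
Your argument is correct and is the standard dyadic iteration one would expect here. The paper itself does not supply a proof of this observation---it is stated as a remark with a reference to \cite{Kris}---so there is no alternative approach in the paper to compare against; your derivation is precisely the ``easy'' computation the authors allude to.

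Your critique of the constant is also well taken. The dyadic bound genuinely yields $C_{0}=\Lambda$, and the claim $C_{0}=1$ cannot hold for arbitrary doubling spaces: for instance, in the two-point space $X=\{0,1\}$ with counting measure one has $\Lambda=2$, yet $\textsf{m}(B_{1.01}(0))/\textsf{m}(B_{1}(0))=2>1.01=(R/\rho)^{\log_{2}\Lambda}$. So the ``$C=1$'' in the remark (presumably a typo for $C_{0}=1$) should indeed be read as $C_{0}=\Lambda$. One small refinement: your justification ``letting $R/\rho\to 1^{+}$'' is a bit imprecise as stated, since in spaces where spheres have measure zero both sides of the inequality tend to $1$ and no contradiction arises; the obstruction really comes from jump discontinuities of $\rho\mapsto\textsf{m}(B_{\rho}(x))$, as in the discrete example above. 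This does not affect the correctness of your main argument or your conclusion.
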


In the Riemannian case, we show that the constant in the
Caffarelli-Kohn-Nirenberg inequality on a complete open Riemannian
manifold should be bigger than or equal to the optimal one on the
Euclidean space of the same dimension, that is, we have the
following

\begin{theorem}\label{eq:18}Let $(M^{n},g)$ be a complete non-compact Riemannian manifold with volume element $dv$, distance function $d(x)=d(x,x_{0})$ for fixed point $x_{0}\in M$, and $n$, $a$, $\alpha$, $\beta$,
$\gamma$, $p$, $q$, $r$ constants as in Theorem \ref{eq:0}. Suppose
that there exists a constant $C\in\mathbb{R}$, such that for all
$u\in \mathcal{C}_{0}^{\infty}(M)$,

\begin{equation}\left(\int_{M}d(x)^{\gamma r}|u|^{r}dv\right)^{\frac{1}{r}}\leq C\left(\int_{M}d(x)^{\alpha p}|\nabla
u|^{p}dv\right)^{\frac{a}{p}}\left(\int_{M}d(x)^{\beta q}|u|^{q}dv\right)^{\frac{1-a}{q}}.\nonumber\\
\end{equation}
Then $C_{opt}(\mathbb{R}^{n})\leq C$.
\end{theorem}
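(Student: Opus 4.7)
The plan is a blow-up argument at the fixed point $x_0$. Given any nonzero $v\in\mathcal{C}_{0}^{\infty}(\mathbb{R}^n)$ with $\mathrm{supp}(v)\subset \mathbb{B}_R(0)$, I would transplant $v$ to small geodesic balls around $x_0$ via the exponential map and, in the limit of small scales, recover the Euclidean CKN inequality with constant $C$. Concretely, fix $r_0>0$ smaller than the injectivity radius of $M$ at $x_0$, and for $\epsilon\in(0,r_0/R)$ define
\begin{equation}
u_\epsilon(x) = v\bigl(\epsilon^{-1}\exp_{x_0}^{-1}(x)\bigr) \quad \text{for } x \in \exp_{x_0}(\mathbb{B}_{r_0}(0)),
\nonumber\end{equation}
extended by zero elsewhere. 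Then $u_\epsilon\in\mathcal{C}_{0}^\infty(M)$ with support contained in $B_{\epsilon R}(x_0)$, so the CKN inequality on $M$ applies to $u_\epsilon$.

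The next step is to rewrite each of the three integrals in normal coordinates $y=\exp_{x_0}^{-1}(x)$ and then perform the scaling $z=y/\epsilon$. Since $\exp_{x_0}$ is a radial isometry, $d(x,x_0)=|y|$ inside the injectivity radius, while in normal coordinates $dv=\sqrt{\det g(y)}\,dy$ with $\sqrt{\det g(y)}=1+O(|y|^2)$ and $g^{ij}(y)=\delta^{ij}+O(|y|^2)$ near the origin. Using $\partial_i u_\epsilon(y)=\epsilon^{-1}(\partial_i v)(y/\epsilon)$, the three integrals acquire the prefactors $\epsilon^{\gamma r+n}$, $\epsilon^{\alpha p+n-p}$, and $\epsilon^{\beta q+n}$, respectively, together with error terms of order $\epsilon^2$ coming from the deviation of $g$ from the flat metric on the compact set $\mathrm{supp}(v)$. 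The exponent-matching identity
\begin{equation}
\frac{1}{r}+\frac{\gamma}{n}=a\left(\frac{1}{p}+\frac{\alpha-1}{n}\right)+(1-a)\left(\frac{1}{q}+\frac{\beta}{n}\right)
\nonumber\end{equation}
from Theorem~\ref{eq:0} guarantees that these powers of $\epsilon$ cancel exactly when the three terms are combined. Letting $\epsilon\to 0$ and applying dominated convergence yields
\begin{equation}
\left(\int_{\mathbb{R}^n}|z|^{\gamma r}|v|^{r}\,dz\right)^{\frac{1}{r}}\leq C\left(\int_{\mathbb{R}^n}|z|^{\alpha p}|\nabla v|^{p}\,dz\right)^{\frac{a}{p}}\left(\int_{\mathbb{R}^n}|z|^{\beta q}|v|^{q}\,dz\right)^{\frac{1-a}{q}}.
\nonumber\end{equation}
Since $v$ was arbitrary, the definition of $C_{opt}(\mathbb{R}^n)$ forces $C_{opt}(\mathbb{R}^n)\leq C$.

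The main technical obstacle I anticipate is the dominated-convergence step when the exponents $\alpha$, $\beta$, $\gamma$ are negative, because the weights $|z|^{\alpha p}$, $|z|^{\beta q}$, $|z|^{\gamma r}$ are then singular at the origin and must still be integrable against the smooth but $O(\epsilon^2)$-perturbed quantities $g^{ij}(\epsilon z)\partial_i v\,\partial_j v$, $|v|^q$ and $|v|^r$. This is resolved by the strict positivity conditions $\frac{1}{p}+\frac{\alpha}{n},\frac{1}{q}+\frac{\beta}{n},\frac{1}{r}+\frac{\gamma}{n}>0$ from Theorem~\ref{eq:0}, which ensure local integrability of each weight on bounded sets; the convergence argument may require splitting the support of $v$ into a small ball around the origin, where integrability of the weight dominates the bounded metric perturbation, and its complement, where $g^{ij}(\epsilon z)-\delta^{ij}$ and $\sqrt{\det g(\epsilon z)}-1$ are uniformly $O(\epsilon^2)$.
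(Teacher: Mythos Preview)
Your proposal is correct and follows essentially the same blow-up/scaling strategy as the paper: both arguments transplant Euclidean test functions into small geodesic balls around $x_0$ via normal coordinates and exploit the exponent identity $\frac{1}{r}+\frac{\gamma}{n}=a\bigl(\frac{1}{p}+\frac{\alpha-1}{n}\bigr)+(1-a)\bigl(\frac{1}{q}+\frac{\beta}{n}\bigr)$ to cancel the scaling factors. The only organizational difference is that the paper argues by contradiction and separates the two steps---first obtaining a local Euclidean CKN inequality with constant $C'=(1+\epsilon)^{\frac{n}{2r}+\frac{an}{2p}+\frac{n(1-a)}{2q}+\frac{a}{2}}C$ via the uniform metric bounds $(1+\epsilon)^{-1}\delta_{ij}\leq g_{ij}\leq(1+\epsilon)\delta_{ij}$, and then scaling exactly on $\mathbb{R}^n$---which sidesteps the dominated-convergence issue you flag, whereas you combine both steps into a single limit; your handling of that limit via the integrability conditions $\frac{1}{p}+\frac{\alpha}{n},\frac{1}{q}+\frac{\beta}{n},\frac{1}{r}+\frac{\gamma}{n}>0$ is correct.
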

Now, recall first the definition of asymptotically non-negative
Ricci curvature.

\begin{definition}A complete open manifold $M^{n}$ is said to have
asymptotically non-negative Ricci curvature with base point
$x_{0}\in M$ if
\begin{equation}\label{eq:21}Ric_{M,g}(x)\geq-(n-1)G(d(x)),\hspace{0,2cm}\forall
x\in M
\end{equation}
where $d(x)$ is the distance function on $M$ from $x_{0}$ and $G\in
C^{1}([0,\infty))$ is a non-negative function satisfying
\begin{equation}\int_{0}^{\infty}tG(t)dt=b_{0}<\infty.\nonumber
\end{equation}
\end{definition}

In this case, $M^{n}$ satisfies the following volume growth
property(see Corollary $2.17$ in \cite{Rig}):

\begin{equation}\label{eq:19}\frac{Vol[B_{R}(p)]}{Vol[B_{\widetilde{R}}(p)]}\leq
e^{(n-1)b_{0}}\left(\frac{R}{\widetilde{R}}\right)^{n},\hspace{0,3cm}
0<\widetilde{R}<R
\end{equation}
which implies easily that $M^{n}$ has doubling volume property at
$p$ and

\begin{equation}Vol[B_{R}(p)]\leq
e^{(n-1)b_{0}}\omega_{n}R^{n},\hspace{0,3cm} \forall R>0.\nonumber
\end{equation}
Then, as a corollary of Theorem \ref{eq:45}, we have

\begin{corolary}\label{eq:17}Let $(M^{n}, g)$ be a complete non-compact Riemannian manifold with Ricci curvature satisfying \eqref{eq:21} and suppose that for some positive constant $C>0$
\begin{equation}\label{eq:16}\left(\int_{M}d(x)^{\gamma r}|u|^{r}dv\right)^{\frac{1}{r}}\leq C\left(\int_{M}d(x)^{\alpha p}|\nabla
u|^{p}dv\right)^{\frac{a}{p}}\left(\int_{M}d(x)^{\beta
q}|u|^{q}dv\right)^{\frac{1-a}{q}}, \hspace{0,3cm}\forall
u\in\mathcal{C}_{0}^{\infty}(M)
\end{equation}
Then for all $R>0$ we have

\begin{equation}e^{-(n-1)b_{0}}\left(\frac{C_{opt}(\mathbb{R}^{n})}{C}\right)^{\frac{n}{a}}V(R)\leq Vol[B_{R}(p)]\leq e^{(n-1)b_{0}}V(R)\nonumber
\end{equation}
where $V(R)$ denotes the volume of the Euclidean ball of radius $R$
in $\mathbb{R}^{n}$.
\end{corolary}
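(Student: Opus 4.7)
\begin{myproof}[Proof plan for Corollary \ref{eq:17}]
The strategy is to cast the Riemannian manifold $(M^n,g)$ as a metric measure space $(M,d_g,\mathrm{vol}_g)$ and apply Theorem \ref{eq:45}. First I would verify that this triple is a proper metric measure space: completeness plus the Hopf--Rinow theorem gives properness of $(M,d_g)$, and $\mathrm{vol}_g$ is a Radon measure assigning positive finite mass to every bounded open set. Then I would check the two structural hypotheses of Theorem \ref{eq:45}. The doubling bound \eqref{eq:32} is exactly the content of \eqref{eq:19}, with $C_0=e^{(n-1)b_0}$; note here that \eqref{eq:19} is stated at a specific base point $p$, but the same argument (Bishop--Gromov comparison under asymptotically non-negative Ricci, from Corollary $2.17$ of \cite{Rig}) applies at every point, so \eqref{eq:32} holds uniformly on $M$. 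For the infinitesimal normalization \eqref{eq:33}, I would invoke the standard fact that on a smooth Riemannian $n$-manifold
\begin{equation}
\lim_{\rho\to 0}\frac{\mathrm{vol}_g(B_\rho(x_0))}{\omega_n\rho^n}=1,\nonumber
\end{equation}
which is immediate from the local Euclidean structure (normal coordinates plus the expansion of the volume density at a point).

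Second, I would upgrade the CKN inequality from the space $\mathcal{C}_0^\infty(M)$ in which \eqref{eq:16} is assumed to the space $\mathrm{Lip}_0(M)$ required in \eqref{eq:31}. For $u\in\mathrm{Lip}_0(M)$ with support $K$, one mollifies using a smooth partition of unity subordinate to a finite chart cover of a slightly larger compact set, producing a family $u_\varepsilon\in\mathcal{C}_0^\infty(M)$ with $u_\varepsilon\to u$ uniformly, $\nabla u_\varepsilon\to\nabla u$ a.e.\ and uniformly bounded by a multiple of the global Lipschitz constant of $u$. Since Rademacher's theorem gives $|Du|=|\nabla u|$ a.e.\ for Lipschitz $u$, a dominated convergence argument on each weighted integral (the weight $d(x,x_0)^{\alpha p}$ etc.\ being locally integrable) passes \eqref{eq:16} to all $u\in\mathrm{Lip}_0(M)$, with the same constant $C$.

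Third, having verified all hypotheses, Theorem \ref{eq:45} applied with $x_0=p$ yields
\begin{equation}
\mathrm{vol}_g(B_R(p))\geq e^{-(n-1)b_0}\Bigl(\frac{C_{opt}(\mathbb{R}^n)}{C}\Bigr)^{\!n/a}\!\omega_n R^n=e^{-(n-1)b_0}\Bigl(\frac{C_{opt}(\mathbb{R}^n)}{C}\Bigr)^{\!n/a}V(R),\nonumber
\end{equation}
which is the lower bound. The matching upper bound $\mathrm{vol}_g(B_R(p))\leq e^{(n-1)b_0}V(R)$ is exactly the consequence of \eqref{eq:19} already recorded in the paper (obtained by letting $\widetilde R\to 0$ in the Bishop--Gromov style quotient and using $\mathrm{vol}_g(B_{\widetilde R}(p))\sim\omega_n\widetilde R^n$ as $\widetilde R\to 0$).

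The only genuinely non-formal step is the density argument in the second paragraph: one has to be careful that the mollification preserves compact support, that the weight $d(\cdot,x_0)^{\alpha p}$ (singular at $x_0$) does not destroy dominated convergence, and that the cutoff does not introduce uncontrolled gradient mass. Once that approximation is in place, the rest is purely a check of hypotheses; the inequality \eqref{eq:32} and the rigidity of the tangent cone at $x_0$ are both classical consequences of Riemannian geometry, so the corollary reduces cleanly to the already-established Theorem \ref{eq:45}.
\end{myproof}
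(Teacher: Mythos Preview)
Your plan is correct and coincides with the paper's treatment: the paper presents this result as an immediate corollary of Theorem~\ref{eq:45} combined with the volume comparison \eqref{eq:19}, and gives no separate proof beyond that remark. The one hypothesis of Theorem~\ref{eq:45} you did not explicitly address is $C\geq C_{opt}(\mathbb{R}^{n})$; this is supplied by Theorem~\ref{eq:18}, which the paper states just before the corollary.
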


A theorem due to Cheeger and Colding \cite{Chee} states that given
an integer $n\geq2$ there exists a constant $\delta(n)>0$ such that
any $n$-dimensional complete Riemannian manifold with non-negative
Ricci curvature and $Vol[B_{r}(x)]\geq(1-\delta(n))V(r)$ for all
$x\in M$ and all $r>0$ is diffeomorphic to $\mathbb{R}^{n}$. Then
combining this result with Corollary \ref{eq:17}, we have the
following rigidity result.

\begin{corolary}Give an integer $n\geq2$, exist $\epsilon(n)>0$ such that any complete non-compact Riemannian manifold $(M^{n},g)$
with non-negative Ricci curvature in which the inequality

\begin{equation}\left(\int_{M}d(x)^{\gamma r}|u|^{r}dv\right)^{\frac{1}{r}}\leq( C_{opt}(\mathbb{R}^{n})+\epsilon(n))\left(\int_{M}d(x)^{\alpha p}|\nabla
u|^{p}dv\right)^{\frac{a}{p}}\left(\int_{M}d(x)^{\beta q}|u|^{q}dv\right)^{\frac{1-a}{q}},\hspace{0,2cm}\forall u\in\mathcal{C}_{0}^{\infty}(M)\nonumber\\
\end{equation}
is satisfied, is diffeomorphic to $\mathbb{R}^{n}$.
\end{corolary}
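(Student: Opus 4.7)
The plan is to apply Corollary \ref{eq:17} with the threshold constant and then invoke the Cheeger--Colding almost-maximal-volume theorem cited just above the statement. Since $\mathrm{Ric}_{M,g}\geq 0$, the asymptotic Ricci condition \eqref{eq:21} holds with $G\equiv 0$, hence $b_{0}=0$ and the prefactors $e^{\pm(n-1)b_{0}}$ in Corollary \ref{eq:17} collapse to $1$. Writing $C=C_{opt}(\mathbb{R}^{n})+\epsilon(n)$ and applying the lower half of Corollary \ref{eq:17} at the base point $x_{0}$ of the hypothesized CKN inequality yields
$$\mathrm{Vol}[B_{R}(x_{0})]\geq \left(\frac{C_{opt}(\mathbb{R}^{n})}{C_{opt}(\mathbb{R}^{n})+\epsilon(n)}\right)^{n/a} V(R)\quad\text{for every } R>0.$$

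The main technical step is to upgrade this one-point lower bound to the uniform form $\mathrm{Vol}[B_{r}(x)]\geq (1-\delta(n))V(r)$ for \emph{all} $x\in M$ and all $r>0$ required by Cheeger--Colding. Under non-negative Ricci curvature, Bishop--Gromov comparison guarantees that the function $r\mapsto \mathrm{Vol}[B_{r}(x)]/V(r)$ is non-increasing, so it possesses a limit $\alpha(x)\in[0,1]$, the asymptotic volume ratio at $x$. A standard triangle-inclusion argument, using $B_{r-d(x,y)}(y)\subset B_{r}(x)$ together with $V(r-d(x,y))/V(r)\to 1$ as $r\to\infty$, shows that $\alpha(x)$ is independent of $x$. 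The displayed bound at $x_{0}$ then forces the common value $\alpha$ to satisfy $\alpha\geq (C_{opt}(\mathbb{R}^{n})/(C_{opt}(\mathbb{R}^{n})+\epsilon(n)))^{n/a}$, and the Bishop--Gromov monotonicity immediately propagates this estimate back to every finite radius: $\mathrm{Vol}[B_{r}(x)]\geq \alpha\, V(r)$ for all $x\in M$ and all $r>0$.

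Finally, I choose $\epsilon(n)>0$ so that the Cheeger--Colding threshold is met, namely
$$\left(\frac{C_{opt}(\mathbb{R}^{n})}{C_{opt}(\mathbb{R}^{n})+\epsilon(n)}\right)^{n/a}\geq 1-\delta(n),$$
which holds as soon as $\epsilon(n)\leq C_{opt}(\mathbb{R}^{n})\bigl((1-\delta(n))^{-a/n}-1\bigr)$. For any such choice the Cheeger--Colding hypothesis is in force on $M$, and the theorem of Cheeger and Colding cited above delivers that $M$ is diffeomorphic to $\mathbb{R}^{n}$. The only non-routine ingredient in this argument is the point-independence of the asymptotic volume ratio, but this is classical in the non-negative Ricci setting; the remainder consists of bookkeeping with the explicit constants coming out of Corollary \ref{eq:17}.
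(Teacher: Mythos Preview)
Your argument is correct and follows the same route the paper intends: specialize Corollary \ref{eq:17} to the non-negative Ricci case ($b_{0}=0$), obtain a Euclidean volume lower bound with factor $\bigl(C_{opt}(\mathbb{R}^{n})/(C_{opt}(\mathbb{R}^{n})+\epsilon(n))\bigr)^{n/a}$, and then choose $\epsilon(n)$ so that this factor exceeds $1-\delta(n)$ to invoke Cheeger--Colding. One small simplification: the ``upgrading'' step from $x_{0}$ to every $x\in M$ via the asymptotic volume ratio, while perfectly valid, is not needed if you appeal directly to Theorem \ref{eq:45}, whose conclusion \eqref{eq:54} is already stated for all $x\in X$ (the doubling hypothesis \eqref{eq:32} is assumed at every point, and Bishop--Gromov supplies it here with $C_{0}=1$).
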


From Bishop comparison theorem \cite{Chavel, Schoen}, we have that
if a complete Riemannian manifold $(M^{n},g)$ has non-negative Ricci
curvature, then for all $x\in M$, $Vol[B_{R}(x)]\leq V(R)$ and
equality hold if, and only if, $B_{R}(x)$ is isometric to Euclidean
ball $V(R)$. Thus by Corollary \ref{eq:17}, we have:

\begin{corolary}Let $(M^{n},g)$ be a complete non-compact Riemannian manifold with non-negative Ricci curvature
and suppose that the following Caffarelli-Kohn-Nirenberg inequality
hold
\begin{equation}\left(\int_{M}d(x)^{\gamma r}|u|^{r}dv\right)^{\frac{1}{r}}\leq C_{opt}(\mathbb{R}^{n})\left(\int_{M}d(x)^{\alpha p}|\nabla
u|^{p}dv\right)^{\frac{a}{p}}\left(\int_{M}d(x)^{\beta q}|u|^{q}dv\right)^{\frac{1-a}{q}},\hspace{0,3cm}u\in\mathcal{C}_{0}^{\infty}(M)\nonumber\\
\end{equation}
Then $M$ is isometric to Euclidean space $\mathbb{R}^{n}$.

\end{corolary}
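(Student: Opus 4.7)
The plan is to invoke Corollary \ref{eq:17} with the optimal constant $C=C_{opt}(\mathbb{R}^{n})$ and pinch the resulting lower bound against the Bishop volume comparison upper bound, so that the rigidity case of Bishop forces the manifold to be Euclidean.

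First, I would observe that the non-negative Ricci hypothesis $\operatorname{Ric}_{M,g}\geq 0$ is the degenerate case of \eqref{eq:21} obtained by taking $G\equiv 0$, for which
\begin{equation*}
b_{0}=\int_{0}^{\infty} tG(t)\,dt=0.
\end{equation*}
Hence the hypotheses of Corollary \ref{eq:17} are satisfied with $C=C_{opt}(\mathbb{R}^{n})$. Since both $e^{(n-1)b_{0}}=1$ and $(C_{opt}(\mathbb{R}^{n})/C)^{n/a}=1$, the conclusion of that corollary collapses to
\begin{equation*}
V(R)\;\leq\;\operatorname{Vol}[B_{R}(x_{0})]\;\leq\;V(R)\qquad \text{for every } R>0,
\end{equation*}
so $\operatorname{Vol}[B_{R}(x_{0})]=V(R)$ for all $R>0$.

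Next, the rigidity clause of the Bishop volume comparison theorem recalled just before the statement says that equality $\operatorname{Vol}[B_{R}(x_{0})]=V(R)$ forces $B_{R}(x_{0})$ to be isometric to the Euclidean ball $\mathbb{B}_{R}(0)$. To promote this family of local isometries to a single global one, I would construct compatible representatives: any isometry $\phi_{R}\colon B_{R}(x_{0})\to\mathbb{B}_{R}(0)$ must send $x_{0}\mapsto 0$, so for $R'<R$ the restriction $\phi_{R}|_{B_{R'}(x_{0})}$ and $\phi_{R'}$ are two such isometries onto $\mathbb{B}_{R'}(0)$ fixing the origin, and therefore differ by an element of $O(n)$. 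Exhausting $M$ by balls $B_{R_{k}}(x_{0})$ with $R_{k}\to\infty$ and inductively adjusting the $\phi_{R_{k}}$ by orthogonal transformations produces a coherent sequence whose pointwise limit is a global isometry $M\to\mathbb{R}^{n}$.

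Conceptually the argument is purely a rigidity statement: all analytic content has been packaged into Corollary \ref{eq:17}, and what remains is only to verify that the sharp choice $C=C_{opt}(\mathbb{R}^{n})$ closes the gap between the CKN-derived lower bound and the Bishop upper bound. I do not anticipate any serious obstacle; the mildly delicate point is the diagonal construction of the global isometry from the ball-by-ball Bishop rigidity, which is a standard $O(n)$-alignment argument.
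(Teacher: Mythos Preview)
Your proposal is correct and follows exactly the route the paper indicates: specialize Corollary \ref{eq:17} to $G\equiv 0$ (hence $b_{0}=0$) with $C=C_{opt}(\mathbb{R}^{n})$ to force $\operatorname{Vol}[B_{R}(x_{0})]=V(R)$ for all $R>0$, then invoke the equality case of Bishop's comparison theorem. The only remark is that your $O(n)$-alignment step, while fine, is more than necessary: Bishop rigidity already tells you that $\exp_{x_{0}}$ restricted to each ball of radius $R$ in $T_{x_{0}}M$ is an isometry onto $B_{R}(x_{0})$, and these are automatically compatible as $R$ varies, so $\exp_{x_{0}}\colon T_{x_{0}}M\to M$ is the global isometry directly.
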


It has been shown by Zhu \cite{Zhu}, that given $\delta>0$, there is
an $\epsilon(n,\delta)$ such that if a complete non-compact
Riemannian manifold $(M^{n},g)$ with sectional curvature satisfying

\begin{equation}K(x)\geq -G(d(x)),\hspace{0,2cm}
\int_{0}^{\infty}tG(t)dt\leq\epsilon\nonumber
\end{equation}
and
\begin{equation}Vol[B_{R}(p)]\geq\left(\frac{1}{2}+\delta\right)V(R),\hspace{0,2cm}\forall
R>0,\nonumber
\end{equation}
then the distance function $d=d(x_{0},.):M\rightarrow\mathbb{R}$ has
no critical points and hence $M$ is diffeomorphic to
$\mathbb{R}^{n}$. Combining this Zhu's theorem with Corollary
\ref{eq:17}, we have

\begin{corolary}Let $(M^{n},g)$ be a complete non-compact Riemannian manifold. Fix a $\delta\in(0,\frac{1}{2})$,
there exist a $b_{0}(n,\delta)>0$ such that, if the sectional
curvature of $M$ satisfies

\begin{equation}K(x)\geq -G(d(x)),\hspace{0,2cm}
\int_{0}^{\infty}tG(t)dt\leq b_{0}\nonumber
\end{equation}
and the inequality \eqref{eq:31} holds on $M$ with
$C<(\frac{1}{2}+\delta)^{-\frac{a}{n}}C_{opt}(\mathbb{R}^{n})$, then
$M$ is diffeomorphic to Euclidean space $\mathbb{R}^{n}$.
\end{corolary}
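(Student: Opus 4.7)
The plan is to combine Corollary \ref{eq:17} with the Zhu diffeomorphism theorem recalled just above the statement. Since $K(x)\geq-G(d(x))$ forces $Ric_{M,g}(x)\geq-(n-1)G(d(x))$, the manifold $M^{n}$ has asymptotically non-negative Ricci curvature with the same function $G$ and bound $\int_{0}^{\infty}tG(t)\,dt\leq b_{0}$. Combined with the assumed inequality \eqref{eq:31}, Corollary \ref{eq:17} then yields
\[
Vol[B_{R}(p)]\ \geq\ e^{-(n-1)b_{0}}\left(\frac{C_{opt}(\mathbb{R}^{n})}{C}\right)^{n/a}V(R)\qquad\forall\,R>0.
\]

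Next I convert the hypothesis $C<(1/2+\delta)^{-a/n}C_{opt}(\mathbb{R}^{n})$ into a usable volume lower bound. It is equivalent to $(C_{opt}(\mathbb{R}^{n})/C)^{n/a}>1/2+\delta$, so the previous estimate improves to $Vol[B_{R}(p)]\geq e^{-(n-1)b_{0}}(1/2+\delta)V(R)$. Since Zhu's theorem requires a strict excess over $V(R)/2$ by a fixed positive amount, I apply it with the smaller parameter $\delta/2\in(0,1/2)$, whose Zhu constant is $\epsilon(n,\delta/2)>0$, and I set
\[
b_{0}(n,\delta)\ :=\ \min\!\left(\epsilon(n,\delta/2),\ \frac{1}{n-1}\log\frac{1+2\delta}{1+\delta}\right)\ >\ 0.
\]
With this choice $e^{-(n-1)b_{0}}(1/2+\delta)\geq 1/2+\delta/2$, so $Vol[B_{R}(p)]\geq(1/2+\delta/2)V(R)$ for every $R>0$.

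At this stage all hypotheses of Zhu's theorem hold with parameter $\delta/2$: the sectional curvature bound $K\geq-G$, the integral estimate $\int_{0}^{\infty}tG(t)\,dt\leq b_{0}\leq\epsilon(n,\delta/2)$, and the volume growth $Vol[B_{R}(p)]\geq(1/2+\delta/2)V(R)$. Zhu's theorem then guarantees that $d(x_{0},\cdot)$ has no critical points, whence $M$ is diffeomorphic to $\mathbb{R}^{n}$. The only delicate point is the calibration of $b_{0}$: the hypothesis on $C$ gives strict excess of $(C_{opt}(\mathbb{R}^{n})/C)^{n/a}$ over $1/2+\delta$ but with no uniform gap as $C$ approaches the threshold, so Zhu must be invoked with a slightly smaller parameter $\delta'<\delta$ and $b_{0}$ must simultaneously absorb the exponential loss $e^{-(n-1)b_{0}}$ in passing from $1/2+\delta$ down to $1/2+\delta'$ and remain below $\epsilon(n,\delta')$. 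Both constraints depend only on $n$ and $\delta$, so a single $b_{0}(n,\delta)>0$ suffices.
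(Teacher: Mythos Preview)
Your argument is correct and follows exactly the route the paper indicates: combine Corollary~\ref{eq:17} (applicable since the sectional curvature bound $K\geq -G$ implies $Ric\geq -(n-1)G$) with Zhu's theorem. The paper itself gives no details beyond ``Combining this Zhu's theorem with Corollary~\ref{eq:17}''; your explicit calibration of $b_{0}(n,\delta)$---passing to the parameter $\delta/2$ in Zhu's theorem so that the exponential loss $e^{-(n-1)b_{0}}$ can be absorbed---fills in precisely the technical step the paper leaves implicit.
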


It is interesting to know under what kind of conditions a complete
$n$-dimensional metric measure space has finite topological type or
is isometric to Euclidean space $\mathbb{R}^{n}$. In the context of
Alexandrov spaces, as application of Theorem \ref{eq:45}, we prove
the following results

\begin{theorem}\label{111}Consider $n$, $a$, $p$, $q$, $r$, $s$, $\mu$, $\theta$ as in Theorem \ref{eq:30}. Let $(X,
d)$ be a complete, locally compact non-compact Alexandrov space with
non-negative curvature and measure $\lambda\mathcal{H}^{n}$, with
\linebreak$\lambda=\frac{\omega_{n}}{\mathcal{H}^{n}(B_{1}(o_{x_{0}}))}$,
where $o_{x_{0}}$ denote the vertix of the tangent cone $K_{x_{o}}M$
at $x_{o}$ and $\mathcal{H}^{n}$ is a $n$-dimensional Hausdorff
measure of $X$. Suppose that $X$ supports the CKN inequality with
\linebreak$C=C_{opt}(\mathbb{R}^{n})$ for some point $x_{0}\in X$.
Then $(X,d)$ is isometric to Euclidean space $\mathbb{R}^{n}$.
\end{theorem}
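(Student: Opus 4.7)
The plan is to apply Theorem \ref{eq:45} with $C=C_{opt}(\mathbb{R}^{n})$ and $C_{0}=1$ to force Euclidean volumes at $x_{0}$, to use the rigidity of Bishop--Gromov on Alexandrov spaces to obtain a metric cone structure, and then to promote this cone to $\mathbb{R}^{n}$ by exploiting the sharpness of the CKN constant.

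For the first step I verify the hypotheses of Theorem \ref{eq:45}. On an $n$-dimensional Alexandrov space of non-negative curvature, the Bishop--Gromov volume comparison gives $\mathcal{H}^{n}(B_{R}(x))/\mathcal{H}^{n}(B_{\rho}(x))\leq(R/\rho)^{n}$ for $x\in X$ and $0<\rho<R$, which is \eqref{eq:32} with $C_{0}=1$ for $\textsf{m}=\lambda\mathcal{H}^{n}$. The small-ball asymptotic $\mathcal{H}^{n}(B_{\rho}(x_{0}))/\rho^{n}\to\mathcal{H}^{n}(B_{1}(o_{x_{0}}))$ as $\rho\to 0^{+}$, combined with the choice $\lambda=\omega_{n}/\mathcal{H}^{n}(B_{1}(o_{x_{0}}))$, gives \eqref{eq:33}. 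Theorem \ref{eq:45} then squeezes $\textsf{m}(B_{\rho}(x_{0}))=\omega_{n}\rho^{n}$ for every $\rho>0$, so the monotone quotient $\rho\mapsto\mathcal{H}^{n}(B_{\rho}(x_{0}))/\rho^{n}$ is constant on $(0,\infty)$. The equality case of the Bishop--Gromov theorem for non-negatively curved Alexandrov spaces then forces $X$ to be isometric to the Euclidean cone $K_{x_{0}}X=C(\Sigma_{x_{0}}X)$, whose link $\Sigma_{x_{0}}X$ is an $(n-1)$-dimensional Alexandrov space of curvature $\geq 1$.

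To finish, I would argue that $\Sigma_{x_{0}}X\cong\mathbb{S}^{n-1}$, which gives $X=C(\mathbb{S}^{n-1})=\mathbb{R}^{n}$. The idea is to test the CKN inequality \eqref{eq:31} on the cone $X=C(\Sigma_{x_{0}}X)$ using functions that separate the radial and spherical variables, and to compare the resulting sharp constant against $C_{opt}(\mathbb{R}^{n})$ using the explicit extremal $V_{0}$ of Theorem \ref{eq:30}. Because $V_{0}$ is radial, it already saturates the one-dimensional part of \eqref{eq:31} exactly, and the residue is a sharp Sobolev/Poincar\'e-type comparison on the link whose equality case, under the curvature assumption $\geq 1$, forces $\Sigma_{x_{0}}X$ to achieve the maximal $(n-1)$-volume $n\omega_{n}$ of $\mathbb{S}^{n-1}$. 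By the Bishop maximal-volume rigidity, $\Sigma_{x_{0}}X\cong\mathbb{S}^{n-1}$.

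The main obstacle I foresee is this last step, and in particular the correct handling of the local slope $|Du|$ on the non-smooth cone $C(\Sigma_{x_{0}}X)$. On the regular locus one has the decomposition $|Du|^{2}=(\partial_{t}u)^{2}+t^{-2}|\nabla_{\sigma}u|^{2}$, but for test functions approaching the apex one needs a careful truncation to stay in $\mathrm{Lip}_{0}(X)$ while still preserving the sharpness of the one-dimensional CKN. Once this book-keeping is done, the identification $\Sigma_{x_{0}}X\cong\mathbb{S}^{n-1}$ is a direct consequence of the rigidity of the spherical inequality, and the theorem follows.
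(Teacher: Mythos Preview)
Your verification of the hypotheses of Theorem~\ref{eq:45} (Bishop--Gromov for \eqref{eq:32} with $C_0=1$, the tangent-cone density for \eqref{eq:33} after the $\lambda$-normalization) and the resulting squeeze $\lambda\mathcal{H}^n(B_\rho(x_0))=\omega_n\rho^n$ follow the paper's argument essentially verbatim. Where you diverge is in the final step: the paper simply asserts that this volume identity forces $X\cong\mathbb{R}^n$, whereas you factor the conclusion through Bishop--Gromov rigidity ($X\cong C(\Sigma_{x_0}X)$) followed by a separate identification $\Sigma_{x_0}X\cong\mathbb{S}^{n-1}$ extracted from the CKN inequality itself.

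The genuine gap is in that last identification. Since $\lambda=\omega_n/\mathcal{H}^n(B_1(o_{x_0}))=n\omega_n/\mathcal{H}^{n-1}(\Sigma_{x_0}X)$, the product $\lambda\,\mathcal{H}^{n-1}(\Sigma_{x_0}X)$ equals $n\omega_n$ \emph{independently of the link}. Consequently, for any radial test function $u=u(d(\cdot,x_0))$ on the cone, each of the three weighted integrals in \eqref{eq:31} coincides with the corresponding Euclidean integral, and the extremal $V_0$ of Theorem~\ref{eq:30} saturates \eqref{eq:31} with constant exactly $C_{opt}(\mathbb{R}^n)$ on $(C(\Sigma_{x_0}X),\lambda\mathcal{H}^n)$ for \emph{every} admissible link. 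There is therefore no ``residue'' Sobolev/Poincar\'e comparison on the link coming out of this computation, and the maximal-volume conclusion $\mathcal{H}^{n-1}(\Sigma_{x_0}X)=n\omega_n$ does not follow from plugging in $V_0$. To separate $\mathbb{S}^{n-1}$ from a smaller link you would need genuinely non-radial competitors and control of the tangential contribution $t^{-1}|D_\sigma u|$ inside $|Du|^p$; for $p\neq 2$ this does not decouple, and you have not shown that \eqref{eq:31} with $C=C_{opt}(\mathbb{R}^n)$ actually fails on a non-Euclidean normalized cone. The apex-truncation issue you flag is minor by comparison; the missing ingredient is a mechanism that forces $\lambda=1$.
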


\begin{theorem}\label{112}Consider $n$, $a$, $p$, $q$, $r$, $s$, $\mu$, $\theta$ as in Theorem \ref{eq:30}. Then exist a
$\delta(n)>0$ such that any locally compact $n$-dimensional complete
Alexandrov space $(X, d)$ with curvature $\geq0$ and $n$-dimensional
Hausdorff measure $\mathcal{H}^{n}$ satisfying

\begin{equation}\liminf_{\rho\rightarrow0}\frac{\mathcal{H}^{n}(B_{\rho}(x_{0}))}{\omega_{n}\rho^{n}}=1\nonumber
\end{equation}
in which the inequality
\begin{equation}\left(\int_{X}d(x,x_{0})^{\gamma r}|u|^{r}d\mathcal{H}^{n}\right)^{\frac{1}{r}}\leq
(C_{opt}(\mathbb{R}^{n})+\delta(n))\left(\int_{X}d(x,x_{0})^{\alpha
p}|D
u|^{p}d\mathcal{H}^{n}\right)^{\frac{a}{p}}\left(\int_{X}d(x,x_{0})^{\beta
q}|u|^{q}d\mathcal{H}^{n}\right)^{\frac{1-a}{q}}\nonumber
\end{equation}
is satisfied for all $u\in Lip_{0}(X)$, has Finite topological type.
\end{theorem}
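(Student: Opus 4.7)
The plan is to combine the volume lower bound furnished by Theorem \ref{eq:45} with the Bishop--Gromov upper bound available on non-negatively curved Alexandrov spaces, and then to feed the resulting pinched volume growth into a finiteness theorem for Alexandrov spaces of almost maximal volume.

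First, I verify the hypotheses of Theorem \ref{eq:45}. Since $(X,d)$ is an $n$-dimensional Alexandrov space of curvature $\geq 0$ equipped with $\mathcal{H}^{n}$, Bishop--Gromov implies that $\rho\mapsto \mathcal{H}^{n}(B_{\rho}(x))/(\omega_{n}\rho^{n})$ is non-increasing for every $x\in X$. This simultaneously yields the volume doubling condition \eqref{eq:32} with exponent $n$ and constant $C_{0}=1$, together with the sharp upper bound $\mathcal{H}^{n}(B_{\rho}(x))\leq \omega_{n}\rho^{n}$. The asymptotic normalization \eqref{eq:33} is explicitly assumed. Applying Theorem \ref{eq:45} with $C = C_{opt}(\mathbb{R}^{n})+\delta(n)$ and $C_{0}=1$ therefore gives
\begin{equation*}
\left(\frac{C_{opt}(\mathbb{R}^{n})}{C_{opt}(\mathbb{R}^{n})+\delta(n)}\right)^{n/a}\omega_{n}\rho^{n} \;\leq\; \mathcal{H}^{n}(B_{\rho}(x)) \;\leq\; \omega_{n}\rho^{n}
\end{equation*}
for every $x\in X$ and every $\rho>0$. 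Writing the lower coefficient as $1-\eta(\delta(n))$, one has $\eta(\delta)\to 0$ as $\delta\to 0$, so by taking $\delta(n)$ small the volume of every ball in $X$ is pinched arbitrarily close to its Euclidean value, uniformly in scale and base point.

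The final step invokes a topological finiteness theorem tailored to non-compact Alexandrov spaces with curvature $\geq 0$ and almost-maximal volume growth: there exists $\eta_{0}(n)>0$ such that any complete, locally compact, non-compact $n$-dimensional Alexandrov space of non-negative curvature with $\mathcal{H}^{n}(B_{\rho}(x))\geq (1-\eta_{0})\omega_{n}\rho^{n}$ for all $x$ and $\rho$ has finite topological type. This is the Alexandrov-geometric counterpart of the Cheeger--Colding/Perelman stability circle of ideas, combined with a Gromov precompactness argument applied to the family of rescaled pointed balls. Choosing $\delta(n)>0$ so that $\eta(\delta(n))<\eta_{0}(n)$ then concludes the proof.

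The main obstacle is the last step, namely identifying and correctly applying the right Alexandrov-space input. One must either cite a stability statement customized for non-compact Alexandrov spaces whose blow-downs are forced by the pinching to be $\mathbb{R}^{n}$, or else reconstruct the argument by combining Perelman's stability theorem on rescaled annular regions with a diagonal extraction controlling the topology at infinity. The pinched volume estimate delivered by Theorem \ref{eq:45} is precisely the quantitative hypothesis that makes such a topological conclusion available.
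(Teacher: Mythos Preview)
Your approach is correct and matches the paper's strategy exactly: verify the hypotheses of Theorem~\ref{eq:45} via Bishop--Gromov (giving $C_{0}=1$), obtain the pinched volume lower bound, and then feed this into an Alexandrov-space finiteness theorem. The paper resolves what you flag as the main obstacle by citing a specific result of Munn (Theorem~\ref{eq:61} in the paper), which requires that $\mathcal{H}^{n}$ satisfy the infinitesimal Bishop--Gromov condition $BG(0,n)$; this hypothesis is supplied for Alexandrov spaces of curvature $\geq 0$ by Kuwae--Shioya (Theorem~\ref{eq:60}), so no reconstruction via Perelman stability is needed.
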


As pointed in \cite{Kris}, on Finsler manifolds with non-negative
$n$-Ricci curvature, the condition \eqref{eq:32} holds with
$C_{0}=1$. In particular, for Finsler manifolds in which a
particular class of Caffarelli-Kohn-Nirenberg inequality holds, they
get some metric rigidity theorem. Motivated by work \cite{Kris} we
obtain similar results on Finsler manifolds for a class of
Caffarelli-Kohn-Nirenberg given by Theorem \ref{eq:45}. That is, we
have

\begin{theorem}\label{eq:58}Consider $n$, $a$, $p$, $q$, $r$, $s$, $\mu$, $\theta$ as in Theorem \ref{eq:30}. Let $(M, F)$ be a complete $n$-dimensional
Finsler manifold. Fix a positive smooth measure on $M$ and assume
that the $n$-Ricci curvature $Ric_{n}$ of $(M,F,\textsf{m})$ is
non-negative, the sharp Caffarelli-Kohn-Nirenberg inequality
\eqref{eq:31} holds for some $x_{0}\in M$, and in addition

\begin{equation}\liminf_{\rho\rightarrow 0}\frac{\textsf{m}(B_{\rho}(x))}{\omega_{n}\rho^{n}}=1\nonumber
\end{equation}
for all $x\in M$. Then the Flag Curvature of $(M,F)$ is identically
zero.
\end{theorem}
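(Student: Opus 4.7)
The plan is to follow the strategy used by Krist\'aly and Ohta in \cite{Kris}, replacing their class of CKN inequalities by the one from Theorem \ref{eq:30}. Under the assumption $\mathrm{Ric}_{n}\geq 0$ of Ohta's $n$-Ricci curvature on the weighted Finsler manifold $(M,F,\mathsf{m})$, the Finslerian Bishop--Gromov volume comparison gives that the function $\rho\mapsto \mathsf{m}(B_{\rho}(x))/\rho^{n}$ is non-increasing on $(0,\infty)$ for every $x\in M$. Combined with the hypothesis $\liminf_{\rho\to 0}\mathsf{m}(B_{\rho}(x))/(\omega_{n}\rho^{n})=1$, this immediately yields both the volume doubling condition \eqref{eq:32} with exponent $n$ and constant $C_{0}=1$, and the global upper bound $\mathsf{m}(B_{\rho}(x))\leq \omega_{n}\rho^{n}$ for all $x\in M$ and $\rho>0$. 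The hypothesis at $x_{0}$ also gives the normalization \eqref{eq:33} needed to invoke Theorem \ref{eq:45}.

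With these ingredients in hand I would apply Theorem \ref{eq:45} directly with $C=C_{opt}(\mathbb{R}^{n})$ and $C_{0}=1$, obtaining the reverse inequality
\begin{equation}
\mathsf{m}(B_{\rho}(x))\;\geq\;\omega_{n}\rho^{n},\qquad \forall\, x\in M,\ \rho>0.\nonumber
\end{equation}
Combining the two bounds forces the equality $\mathsf{m}(B_{\rho}(x))=\omega_{n}\rho^{n}$ for every $x\in M$ and every $\rho>0$. In other words, the Bishop--Gromov comparison is saturated at every point and every scale.

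The final step is to turn this Euclidean volume growth into a statement about the flag curvature. Here I would invoke the rigidity portion of Ohta's Finslerian Bishop--Gromov theorem (used in the same way by Krist\'aly--Ohta): equality in the volume comparison for all $x$ and all $\rho$ forces the Jacobi field equation along every geodesic issuing from any point to coincide with the Euclidean one, which in turn forces $\mathrm{Ric}_{n}\equiv 0$, and then, through the weighted curvature identities for the $n$-Ricci tensor on a Finsler manifold, forces the flag curvature itself to vanish identically.

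The main obstacle is this last rigidity step. While the analytic inequality \eqref{eq:54} in Theorem \ref{eq:45} is a soft output, extracting flag curvature information from saturated volume growth requires the fine Finslerian rigidity machinery (the equality case in Ohta's Bishop--Gromov and the structure of the $n$-Ricci tensor); care must be taken to ensure that the hypothesis $\liminf\mathsf{m}(B_{\rho}(x))/(\omega_{n}\rho^{n})=1$ at every $x$ — not just at $x_{0}$ — feeds into the rigidity argument so that flag curvature vanishes globally rather than only along geodesics emanating from $x_{0}$.
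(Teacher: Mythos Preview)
Your proposal is correct and follows essentially the same route as the paper: Ohta's Bishop--Gromov comparison under $\mathrm{Ric}_{n}\geq 0$ gives \eqref{eq:32} with $C_{0}=1$; the density hypothesis at every $x$ yields both the upper bound $\mathsf{m}(B_{\rho}(x))\leq\omega_{n}\rho^{n}$ and the normalization \eqref{eq:33} at $x_{0}$; Theorem \ref{eq:45} with $C=C_{opt}(\mathbb{R}^{n})$ then supplies the matching lower bound, forcing $\mathsf{m}(B_{\rho}(x))=\omega_{n}\rho^{n}$ for all $x$ and $\rho$; and the equality case of Bishop--Gromov is invoked for rigidity.

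One point to sharpen in the final step: the paper does \emph{not} pass through ``$\mathrm{Ric}_{n}\equiv 0$ and weighted curvature identities'' to reach vanishing flag curvature, and indeed vanishing (weighted) Ricci curvature does not by itself force vanishing flag curvature. Rather, the equality case of Ohta's theorem (Theorem \ref{eq:56}) says directly that every Jacobi field along every geodesic $\tau$ has the form $J(t)=tP(t)$ with $P$ parallel; inserting this into the Jacobi equation \eqref{eq:57} gives $R^{\dot{\tau}}(P,\dot{\tau})\dot{\tau}=0$, hence $K(\mathrm{span}\{P,\dot{\tau}\},\dot{\tau})=0$. Since $\tau$ and $P$ are arbitrary, the flag curvature vanishes identically. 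Your phrase ``the Jacobi field equation coincides with the Euclidean one'' already captures this; the detour through $\mathrm{Ric}_{n}\equiv 0$ should simply be dropped.
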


\begin{theorem}\label{114}Consider $n$, $a$, $p$, $q$, $r$, $s$, $\mu$, $\theta$ as in Theorem \ref{eq:30}. Let $(M, F)$ be a complete $n$-dimensional Berwald
space with Busemann-Hausdorff measure $\textsf{m}_{BH}$ and
non-negative Ricci curvature. If for some $x_{0}\in M$ the sharp
Caffarelli-Kohn-Nirenberg inequality \eqref{eq:31} holds, then
$(M,F)$ is isometric to a  Minkowski space.
\end{theorem}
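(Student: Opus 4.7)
The plan is to derive the conclusion from Theorem \ref{eq:45} together with the sharp Finsler Bishop--Gromov volume comparison and a rigidity argument that exploits the Berwald structure. First, on a Berwald space $(M,F,\textsf{m}_{BH})$ with non-negative Ricci curvature, the observation of Krist\'aly and Ohta (see \cite{Kris}, Remark 1.3) shows that the volume doubling hypothesis \eqref{eq:32} holds with exponent $n$ and constant $C_0=1$, and that the infinitesimal normalization
\begin{equation}
\liminf_{\rho\to 0}\frac{\textsf{m}_{BH}(B_\rho(x))}{\omega_n \rho^n}=1
\end{equation}
holds at every $x\in M$. Since \eqref{eq:31} is assumed with $C=C_{opt}(\mathbb{R}^n)$, Theorem \ref{eq:45} applied at $x_0$ yields the sharp lower bound $\textsf{m}_{BH}(B_\rho(x))\geq \omega_n \rho^n$ for every $x\in M$ and every $\rho>0$.

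Second, I would invoke the Finsler Bishop--Gromov comparison for Berwald spaces with non-negative Ricci curvature and Busemann--Hausdorff measure, which provides the reverse inequality $\textsf{m}_{BH}(B_\rho(x))\leq \omega_n \rho^n$. Combining the two bounds, the volume of every metric ball attains the Euclidean value:
\begin{equation}
\textsf{m}_{BH}(B_\rho(x))=\omega_n\rho^n,\qquad \forall\, x\in M,\ \rho>0.
\end{equation}

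Third, I would run the rigidity step in two stages. Equality for all base points and all radii in Bishop--Gromov forces, by the standard monotonicity argument for the relative volume ratio, that the flag curvature of $(M,F)$ vanishes identically; this is the Finsler analogue of the Riemannian equality case and parallels what is obtained in Theorem \ref{eq:58}. A Berwald space whose flag curvature vanishes identically is locally Minkowski, because the Chern connection is linear and parallel transport preserves the fundamental tensor, so the tangent Minkowski norms $F_x$ glue coherently into a global parallel field. To globalize, the universal cover $\widetilde{M}$ is then isometric to the Minkowski space $(T_{x_0}M, F_{x_0})$, and the identity $\textsf{m}_{BH}(B_\rho(x))=\omega_n\rho^n$ for all $\rho>0$ rules out any nontrivial deck action, since any nontrivial quotient of a Minkowski space has strictly sub-Euclidean volume growth at large scales. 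Hence $(M,F)$ itself must be isometric to a Minkowski space.

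The main technical obstacle is the rigidity step: upgrading equality in the Finsler Bishop--Gromov inequality to identically vanishing flag curvature, and then passing from the local Minkowski structure to a global Minkowski isometry. The first part relies on the Shen--Ohta nonlinear Laplacian comparison together with the Berwald property, which forces the weighted Ricci tensor built from $\textsf{m}_{BH}$ to coincide with the ordinary Ricci curvature, so that the Riemannian-style monotonicity carries over. The second part is where both the non-compactness of $M$ and the Euclidean volume growth of all metric balls enter essentially, because together they force the fundamental group of $M$ to be trivial and thereby turn the development map into a global isometry $(M,F)\to (T_{x_0}M,F_{x_0})$.
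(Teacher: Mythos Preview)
Your proposal is correct and follows essentially the same route as the paper: reduce to the hypotheses of Theorem \ref{eq:45} via the Berwald property (which identifies Ricci with $n$-Ricci for $\textsf{m}_{BH}$) and the density normalization of $\textsf{m}_{BH}$, deduce the exact Euclidean volume growth, use the equality case of the Finsler Bishop--Gromov comparison to force vanishing flag curvature, and then combine the Berwald-plus-zero-flag-curvature local Minkowski rigidity with the global volume identity to conclude. The paper simply packages the middle step by invoking Theorem \ref{eq:58} directly, whereas you unfold that argument; the ingredients and logic are the same.
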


Finally, in \cite{Lak} the Author define the concept of large volume
growth on Finsler space, and conjectured the following

\begin{conjecture}A geodesically complete Berwald space $(M,F)$ of non-negative
flag curvature with large volume growth is diffeomorphic to
Euclidean space $\mathbb{R}^{n}$.
\end{conjecture}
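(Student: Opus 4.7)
The statement is a conjecture, so I will propose a plan inspired by the Riemannian model theorem (non-negative sectional curvature plus maximal volume growth implies diffeomorphism to $\mathbb{R}^{n}$) and try to identify which Riemannian tools already have Berwald analogues and which would have to be developed.

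The plan is to import the critical-point-theory-of-distance-functions strategy (as in Grove--Shiohama, and in the Riemannian large-volume-growth theorems of Perelman, Cheeger--Gromoll, Shen--Sormani) into the Berwald setting. First, I would fix a basepoint $p\in M$ and use the Bishop--Gromov comparison with Busemann--Hausdorff measure, which is available on Berwald spaces with $\mathrm{Ric}\geq 0$ (and in particular with non-negative flag curvature, since flag$\,\geq 0$ implies $\mathrm{Ric}\geq 0$ in the Berwald setting). This gives that the function $\rho\mapsto\textsf{m}_{BH}(B_{\rho}(p))/\omega_{n}\rho^{n}$ is non-increasing, and the large volume growth hypothesis
\[
\alpha_{M}:=\lim_{\rho\to\infty}\frac{\textsf{m}_{BH}(B_{\rho}(p))}{\omega_{n}\rho^{n}}>0
\]
pins the ratio between $\alpha_{M}$ and $1$ for all radii.

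Next, I would adapt the excess-type estimate of Abresch--Gromoll to the Berwald setting. Because a Berwald space has all tangent spaces linearly isometric to a common Minkowski norm, the first and second variation formulas behave as in the Riemannian case along geodesics, and Toponogov-type comparison for triangles is known in the non-negatively curved Berwald case (Shen, Kristály--Ohta, and others). The plan is to combine the volume ratio lower bound $\alpha_M>0$ with this comparison to bound the Abresch--Gromoll excess $e_{x_{+}x_{-}}(y):=d(x_{+},y)+d(y,x_{-})-d(x_{+},x_{-})$ by $C\bigl(d(y,\overline{x_{+}x_{-}})\bigr)^{2}$ for suitable rays, and thus rule out critical points of $d(\cdot,p)$ outside a compact set. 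Since $F$ is not reversible in general, I would work with forward and reverse distance separately and use the uniform convexity on the indicatrix, which is automatic on a Berwald space of class $C^\infty$.

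Once no critical points of $d(\cdot,p)$ exist outside some ball $B_{R_{0}}(p)$, the Finsler isotopy lemma (gradient flow of a smoothed distance is well-defined away from critical points in the Berwald setting since the Chern connection preserves the parallel Minkowski structure) yields a deformation retraction of $M\setminus B_{R_{0}}(p)$ onto $\partial B_{R_{0}}(p)$. Combined with a Berwald soul theorem argument (the convex exhaustion by sublevel sets of Busemann functions still works because those functions are convex in the flag-non-negative case), one concludes that the soul is a point and that the Sharafutdinov retraction is a diffeomorphism $M\cong T_{p}M\cong\mathbb{R}^{n}$.

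The hardest step will be the excess estimate, because the proof of Abresch--Gromoll in the Riemannian case relies on Hessian comparison for $r^{2}$, and in the Berwald case the appropriate substitute involves the $S$-curvature and mean-covariation terms which only vanish identically for the Busemann--Hausdorff measure in special cases. If those $S$-curvature corrections can be absorbed using the fact that a Berwald metric has vanishing $S$-curvature with respect to $\textsf{m}_{BH}$ (Shen's theorem), then the Riemannian proof should go through verbatim; establishing this in full generality is, in my view, where the real work of the conjecture lies.
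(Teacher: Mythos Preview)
The paper does not prove this statement: it is stated as a conjecture (due to Lakzian), and the immediately following Remark simply records that Kell resolved it affirmatively (Corollary~27 of \cite{Kell}). There is therefore no in-paper proof to compare your proposal against; the authors invoke the result only as a black box when proving Theorem~\ref{117}.

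That said, your outline and Kell's actual argument take genuinely different routes. Kell does not rebuild critical-point theory of distance functions or Abresch--Gromoll excess estimates inside the Finsler category. Instead he exploits the structure theory of Berwald spaces: every Berwald metric admits an affinely equivalent Riemannian metric $g$ (same geodesics, same connection), non-negative flag curvature of $F$ forces non-negative sectional curvature of $g$, and the large-volume-growth hypothesis transfers to $(M,g)$; the diffeomorphism to $\mathbb{R}^{n}$ then follows from the classical Riemannian theorem. This reduction sidesteps precisely the step you flag as hardest. Your direct approach is not unreasonable---in particular your remark that the $S$-curvature with respect to $\textsf{m}_{BH}$ vanishes on a Berwald space is correct and is exactly what makes the comparison geometry tractable---but carrying it out would amount to reproving the Riemannian large-volume-growth theorem inside Finsler geometry, which is substantially more work than the problem requires once the Berwald structure theorem is on the table.
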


\begin{observation}\label{eq4}Kell in \cite{Kell} gave an affirmative answer to this
conjecture, see (\cite{Kell}, Corollary $27$).
\end{observation}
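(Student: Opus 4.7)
The final statement is an \emph{Observation} of purely bibliographic character: it asserts that the preceding conjecture has already been settled by Kell, and points the reader to Corollary~27 of \cite{Kell}. There is consequently no theorem to prove here in the usual sense; the content of the remark is a citation, and what a reader needs to do is verify that the cited corollary does indeed supply an affirmative answer to the conjecture as stated in the paper.

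The verification I would carry out has three steps. First, I would transcribe the conjecture's hypotheses precisely: a geodesically complete Berwald space $(M,F)$ whose flag curvature is non-negative and which has large volume growth in the sense used earlier in the paper (with respect to the relevant Busemann--Hausdorff or smooth measure fixed by the citation context). Second, I would open \cite{Kell} at Corollary~27, read its statement, and match its hypotheses against those in the conjecture, paying particular attention to (a) the class of spaces (Berwald versus the broader classes Kell treats), (b) the curvature assumption (non-negative flag curvature and how it is phrased in Kell's framework of metric measure spaces satisfying a synthetic curvature--dimension or measure-contraction condition), and (c) the definition of ``large volume growth'' employed, since different authors use slightly different normalizations. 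Third, I would confirm that Kell's conclusion is diffeomorphism with $\mathbb{R}^{n}$ (and not merely, say, homeomorphism or finite topological type), so that the conjecture's conclusion is fully captured.

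The only point where care is actually required is the translation of vocabulary between the two papers, since Finsler geometry admits several inequivalent notions of curvature bound and of volume growth. Once the hypotheses and conclusion are matched, the remark is justified by direct appeal to \cite[Corollary~27]{Kell}, and no further argument is necessary.
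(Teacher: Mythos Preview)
Your reading is correct: the paper offers no proof of this observation, since it is a bibliographic remark pointing to \cite[Corollary~27]{Kell}, and your proposed verification of hypotheses and conclusion is exactly the due diligence such a citation calls for. There is nothing further to compare.
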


As consequence of this fact, we prove that

\begin{theorem}\label{117}Consider $n$, $a$, $p$, $q$, $r$, $s$, $\mu$, $\theta$ as in Theorem \ref{eq:30}. Let $(M, F, \textsf{m}_{BH})$ be a complete $n$-dimensional Berwald
space with Busemann-Hausdorff measure $\textsf{m}_{BH}$ and
non-negative flag curvature. If for some $x_{0}\in M$ the
Caffarelli-Kohn-Nirenberg inequality \eqref{eq:31} holds on $M$ for
some constant $C\geq C_{opt}(\mathbb{R}^{n})$, then $(M,F)$ is
diffeomorphic to Euclidean space $\mathbb{R}^{n}$.
\end{theorem}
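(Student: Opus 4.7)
My plan is to verify the hypotheses of Theorem \ref{eq:45} with $C_{0}=1$ for the triple $(M,F,\textsf{m}_{BH})$, obtain $n$-dimensional lower volume growth, and then invoke Kell's affirmative answer to the Lakzian conjecture (Observation \ref{eq4}) to upgrade this to a diffeomorphism with $\mathbb{R}^{n}$.

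For the volume doubling bound \eqref{eq:32}, I would use that on a Berwald manifold the Busemann-Hausdorff measure has identically vanishing $S$-curvature, so the weighted $n$-Ricci curvature of $(M,F,\textsf{m}_{BH})$ coincides with the Finsler Ricci curvature. The non-negative flag curvature assumption therefore yields $Ric_{n}\geq 0$, and the Bishop-Gromov comparison for Finsler manifolds with non-negative $n$-Ricci (as recalled e.g.\ in \cite{Kris}) produces
$$\frac{\textsf{m}_{BH}(B_{R}(x))}{\textsf{m}_{BH}(B_{\rho}(x))}\leq \left(\frac{R}{\rho}\right)^{n},\qquad 0<\rho<R,\ x\in M,$$
which is exactly \eqref{eq:32} with $C_{0}=1$. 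For the infinitesimal Euclidean-type condition \eqref{eq:33}, I would exploit that the BH density $\sigma_{BH}(x)$ is defined precisely so that the Minkowski unit ball of $(T_{x}M,F(x,\cdot))$ has BH volume $\omega_{n}$. Combined with the fact that the Finsler exponential map at $x_{0}$ is a $C^{1}$-diffeomorphism near the origin and with the continuity of $F$, a standard change-of-variables and rescaling argument yields
$$\lim_{\rho\to 0}\frac{\textsf{m}_{BH}(B_{\rho}(x_{0}))}{\omega_{n}\rho^{n}}=1.$$

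With both hypotheses established, Theorem \ref{eq:45} applied to $(M,F,\textsf{m}_{BH})$ gives
$$\textsf{m}_{BH}(B_{\rho}(x_{0}))\geq \left(\frac{C_{opt}(\mathbb{R}^{n})}{C}\right)^{\tfrac{n}{a}}\omega_{n}\rho^{n},\qquad \rho>0,$$
so that
$$\liminf_{\rho\to\infty}\frac{\textsf{m}_{BH}(B_{\rho}(x_{0}))}{\rho^{n}}\geq \left(\frac{C_{opt}(\mathbb{R}^{n})}{C}\right)^{\tfrac{n}{a}}\omega_{n}>0.$$
This is exactly the large volume growth condition introduced in \cite{Lak}. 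Since $(M,F)$ is a geodesically complete Berwald space of non-negative flag curvature with large volume growth, Observation \ref{eq4} (Kell's theorem) concludes that $M$ is diffeomorphic to $\mathbb{R}^{n}$.

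The main obstacle will be the careful justification of \eqref{eq:33}: the tangent space at $x_{0}$ is only a Minkowski normed space (not Euclidean), and the Finsler exponential map is in general merely $C^{1}$, so the asymptotic expansion of $\textsf{m}_{BH}(B_{\rho}(x_{0}))$ has to be handled with care. The crucial point that makes this limit equal to $1$ is precisely the calibration built into the Busemann-Hausdorff density $\sigma_{BH}$, which absorbs the non-Euclidean shape of the indicatrix; this is why the argument works with $\textsf{m}_{BH}$ and would fail, without further normalization, for arbitrary Finsler measures such as the Holmes-Thompson one.
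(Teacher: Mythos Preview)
Your proposal is correct and follows essentially the same route as the paper's own proof: reduce to Theorem \ref{eq:45} by checking \eqref{eq:32} (via $Ric_{n}\geq 0$ on a Berwald space with the Busemann--Hausdorff measure, using vanishing $S$-curvature and the Bishop--Gromov comparison, exactly as in the paper) and \eqref{eq:33} (the paper simply cites Shen's density lemma where you sketch the exponential-map argument), then feed the resulting uniform lower volume bound into Kell's resolution of Lakzian's conjecture (Observation \ref{eq4}). The only cosmetic difference is that the paper explicitly invokes Hopf--Rinow to record that $(M,d_{F},\textsf{m}_{BH})$ is proper before applying Theorem \ref{eq:45}, which you should also state.
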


\end{section}


\begin{section}{Proof of Theorem 1.3}
Before proving Theorem $1.3$ we need the following lemma, the proof
of which is similar to the arguments used by Ledoux and Xia
(cf.\cite{Led},\cite{Xia4}-\cite{Xia}). For the sake of
completeness, we will include it.

\begin{lema}\label{eq:46}Let $(X,d,\textsf{m})$ be a proper metric measure space with measure satisfying the conditions \eqref{eq:32} and
\eqref{eq:33} in Theorem \ref{eq:45} for some point $x_{0}\in X$.
Let $d(x)=d(x,x_{0})$, and suppose that the inequality \eqref{eq:31}
holds on $X$ for some constant $C>C_{opt}(\mathbb{R}^{n})$. Then,
for all $\lambda>0$
\begin{equation}F(\lambda)\geq\left(\frac{C_{opt}(\mathbb{R}^{n})}{C}\right)^{\frac{n}{a}}G(\lambda)\nonumber
\end{equation}
where
\begin{equation}\label{eq:36}
F(\lambda)=\frac{q-p}{r(p-1)-(q-p)}\int_{X}\frac{d(x)^{ \gamma
r}}{(\lambda+d(x)^{\frac{n-p-\mu}{n-p}\frac{p}{p-1}})^{\frac{q(p-1)}{q-p}}}d\textsf{m}(x)
\end{equation}
and

\begin{equation}
G(\lambda)=\frac{q-p}{r(p-1)-(q-p)}\int_{\mathbb{R}^{n}}\frac{|x|^{
\gamma
r}}{(\lambda+|x|^{\frac{n-p-\mu}{n-p}\frac{p}{p-1}})^{\frac{q(p-1)}{q-p}}}d\textsf{m}_{E}(x).\nonumber
\end{equation}
\end{lema}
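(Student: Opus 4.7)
The plan is to plug the Euclidean extremal $V_0$ from Theorem~\ref{eq:30} into the CKN inequality on $X$ and compare the three resulting weighted integrals with their Euclidean counterparts, for which CKN is an equality. I would define the test function
\[u_\lambda(x):=f\bigl(d(x)\bigr),\quad f(t)=\bigl(\lambda+t^\beta\bigr)^{-(p-1)/(q-p)},\quad \beta:=\frac{n-p-\mu}{n-p}\cdot\frac{p}{p-1},\]
which on $\mathbb{R}^n$ (with $A=1$, $B=1/\lambda$) coincides with the extremizer $V_0$ of Theorem~\ref{eq:30}. Since $u_\lambda\notin Lip_0(X)$ in general, one first truncates by a cutoff $\eta_R$ supported in $B_{2R}(x_0)$ and equal to $1$ on $B_R(x_0)$, applies \eqref{eq:31} to $\eta_R u_\lambda$, and lets $R\to\infty$ by dominated convergence; the integrability at $0$ and at $\infty$ is guaranteed by the exponent conditions \eqref{eq1}.

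Because $d(\cdot,x_0)$ is $1$-Lipschitz, $|Dd|(x)\leq 1$ everywhere, so one has the pointwise bound $|Du_\lambda|(x)\leq |f'(d(x))|$. This reduces each of the three integrals appearing in \eqref{eq:31} for $u_\lambda$ to a purely radial one of the form $\int_X \Phi_i(d(x))\,d\textsf{m}(x)$, and the layer-cake identity
\[\int_X \Phi(d(x))\,d\textsf{m}(x)=-\int_0^\infty \Phi'(t)\,\textsf{m}(B_t(x_0))\,dt\]
rewrites it as $-\int_0^\infty \Phi_i'(t)\,\textsf{m}(B_t(x_0))\,dt$. The very same calculation carried out on $\mathbb{R}^n$ for $V_0$ produces the identical integrand paired with $\omega_n t^n$ in place of $\textsf{m}(B_t(x_0))$, and the gradient bound there is an equality because $V_0$ is radial.

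Combining the CKN inequality on $X$ applied to $u_\lambda$,
\[\bigl(\tilde I_1(\lambda)\bigr)^{1/r}\leq C\bigl(\tilde I_2(\lambda)\bigr)^{a/p}\bigl(\tilde I_3(\lambda)\bigr)^{(1-a)/q},\]
with the Euclidean equality $\bigl(I_1^E(\lambda)\bigr)^{1/r}=C_{opt}(\mathbb{R}^n)\bigl(I_2^E(\lambda)\bigr)^{a/p}\bigl(I_3^E(\lambda)\bigr)^{(1-a)/q}$, and dividing, one can isolate $\tilde I_3/I_3^E$. The exponent identity $\gamma r=\beta q$ (valid for the parameters of Theorem~\ref{eq:30}, since $\gamma r=-s=-\theta=\beta q$) makes $\tilde I_1$ and $\tilde I_3$ share the weight $d^{\gamma r}$ and differ only in the exponent on $(\lambda+d^\beta)$. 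A Pohozaev-type identity for the weighted $p$-Laplace Euler--Lagrange equation satisfied by $V_0$ expresses $I_1^E$ and $I_2^E$ as explicit scalar multiples of $I_3^E$, with the $\lambda$-exponents dictated by the CKN scaling relation; substituting these into the Euclidean equality and matching with the inequality on $X$ yields $\tilde I_3\geq (C_{opt}/C)^{n/a}\,I_3^E$. Since $F(\lambda)$ and $G(\lambda)$ are both $(q-p)/[r(p-1)-(q-p)]$ times $\tilde I_3(\lambda)$ and $I_3^E(\lambda)$ respectively, the claim follows.

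The main technical point will be that the inequality $|Du_\lambda|\leq|f'(d)|$ on $X$ is only one-sided, whereas $|\nabla V_0|=|f'(|x|)|$ holds with equality on $\mathbb{R}^n$. Propagating this asymmetry through the H\"older-type combination of the three integrals while producing the sharp exponent $n/a$ in the final inequality requires careful bookkeeping of the Beta-function factors arising in the Euclidean integrals and of their interaction with the CKN scaling relation in Theorem~\ref{eq:30}.
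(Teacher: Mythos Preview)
Your plan contains a genuine gap at the ``matching'' step. Applying the CKN inequality on $X$ to $u_\lambda$ yields only a relation among the three $X$-integrals $\tilde I_1,\tilde I_2,\tilde I_3$; in the paper's notation these are $-F'(\lambda)$, a constant times $\tfrac{r(p-1)-(q-p)}{q-p}F(\lambda)+\lambda F'(\lambda)$, and a constant times $F(\lambda)$, respectively. So what you obtain from CKN on $X$ is a \emph{differential inequality} for $F$, not a pointwise comparison with the Euclidean quantity $G$. The Pohozaev-type relations $I_1^E=c_1(\lambda)I_3^E$ and $I_2^E=c_2(\lambda)I_3^E$ you invoke are consequences of the exact Euclidean volume growth $\textsf{m}_E(\mathbb{B}_t(0))=\omega_n t^n$; the analogous relations \emph{fail} on $X$ because $\textsf{m}(B_t(x_0))$ is unknown, so dividing the $X$-inequality by the Euclidean equality leaves three independent ratios $\tilde I_i/I_i^E$, and there is no way to isolate $\tilde I_3/I_3^E$ by algebra or bookkeeping alone.

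What the paper actually does, and what your sketch is missing, is an ODE comparison together with the density hypothesis~\eqref{eq:33}. One sets $H_0(\lambda)=\bigl(C_{opt}(\mathbb{R}^n)/C\bigr)^{n/a}G(\lambda)$, observes that $H_0$ satisfies the differential \emph{equality} corresponding to $F$'s differential inequality (with the same constant $\Gamma$), and proves by a monotonicity argument that if $F(\lambda_0)<H_0(\lambda_0)$ for some $\lambda_0$ then $F<H_0$ on all of $(0,\lambda_0]$. The density condition~\eqref{eq:33} is then used to show $\liminf_{\lambda\to 0}F(\lambda)/G(\lambda)\geq 1$, hence $\liminf_{\lambda\to 0}F(\lambda)/H_0(\lambda)>1$ strictly (here the assumption $C>C_{opt}(\mathbb{R}^n)$ is essential), which rules out $F<H_0$ anywhere. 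Your proposal never uses~\eqref{eq:33}; without it the conclusion can fail, since one could rescale the measure and keep CKN while shrinking $F$.
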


\begin{proof}Firstly observe that $F$ is well defined and of class $C^{1}$. Indeed, by Fubini's theorem
(See \cite{Schoen})

\begin{equation}
F(\lambda)=\frac{q-p}{r(p-1)-(q-p)}\int_{0}^{\infty}\textsf{m}\Bigg{\{}x:\frac{d(x)^{
\gamma
r}}{(\lambda+d(x)^{\frac{n-p-\mu}{n-p}\frac{p}{p-1}})^{\frac{q(p-1)}{q-p}}}>s\Bigg{\}}ds.
\end{equation}
By the process of change of variable of the form

\begin{equation}
s=\frac{h^{ \gamma
r}}{(\lambda+h^{\frac{n-p-\mu}{n-p}\frac{p}{p-1}})^{\frac{q(p-1)}{q-p}}}\nonumber
\end{equation}
we get

\begin{eqnarray}\label{eq:37}
F(\lambda)&=&\frac{q-p}{r(p-1)-(q-p)}\int_{0}^{\infty}\textsf{m}\Big{\{}x:d(x)<h\Big{\}}h^{\gamma
r-1}\frac{\Big{[}-\gamma r\lambda+(\frac{pq}{q-p}\frac{n-p-\mu}{n-p}-\gamma r)h^{\frac{n-p-\mu}{n-p}\frac{p}{p-1}}\Big{]}}{(\lambda+h^{\frac{n-p-\mu}{n-p}\frac{p}{p-1}})^{\frac{q(p-1)}{q-p}+1}}dh\nonumber\\
&=&\frac{q-p}{r(p-1)-(q-p)}\int_{0}^{\infty}\textsf{m}(B_{h}(x_{0}))h^{\gamma
r-1}\frac{\Big{[}-\gamma
r\lambda+(\frac{pq}{q-p}\frac{n-p-\mu}{n-p}-\gamma
r)h^{\frac{n-p-\mu}{n-p}\frac{p}{p-1}}\Big{]}}{(\lambda+h^{\frac{n-p-\mu}{n-p}\frac{p}{p-1}})^{\frac{q(p-1)}{q-p}+1}}dh.
\end{eqnarray}
The hypothesis \eqref{eq:32} and \eqref{eq:33} implies that
$\textsf{m}(B_{h}(x_{0}))\leq Ah^{n}$, $\forall h>0$, for some
positive constant $A\in \mathbb{R}$. Thus

\begin{equation}F(\lambda)\leq\frac{(q-p)A}{r(p-1)-(q-p)}\int_{0}^{\infty}h^{n+\gamma
r-1}\frac{\Big{[}-\gamma
r\lambda+(\frac{pq}{q-p}\frac{n-p-\mu}{n-p}-\gamma
r)h^{\frac{n-p-\mu}{n-p}\frac{p}{p-1}}\Big{]}}{(\lambda+h^{\frac{n-p-\mu}{n-p}\frac{p}{p-1}})^{\frac{q(p-1)}{q-p}+1}}dh.\nonumber
\end{equation}

From (\ref{eq1}) and (\ref{eq2}) we have that

$$
n+\gamma r-1>-1
$$
and
$$
n+\gamma r-1-\left(\frac{n-p-\mu}{n-p}\right)\frac{pq}{p-1}\frac{p-1}{q-p}<-1.
$$
Therefore, $0\leq F(\lambda)<\infty$, $\forall \lambda>0$ and $F$ is
differentiable. Also, we have

\begin{equation}\label{eq:35}F'(\lambda)=-\int_{X}\frac{d(x)^{\gamma
r}}{(\lambda+d(x)^{\frac{n-p-\mu}{n-p}\frac{p}{p-1}})^{\frac{r(p-1)}{q-p}}}d\textsf{m}(x).
\end{equation}

For every $\lambda >0$ consider the sequence of functions
$u_{\lambda,k}:X\rightarrow \mathbb{R}$, $k\in\mathbb{N}$ defined by

\begin{equation}u_{\lambda,k}(x):=\max\{0,\min\{0,k-d(x)\}+1\}\left(\lambda+\max\left\{d(x),\frac{1}{k}\right\}^{\frac{n-p-\mu}{n-p}\frac{p}{p-1}}\right)^{-\frac{(p-1)}{q-p}}.\nonumber
\end{equation}
Note that since $(X,d)$ is proper, the set
$supp(u_{\lambda,k})=\{x\in X:d(x)\leq k+1\}$ is compact. Therefore,
$u_{\lambda,k}\in Lip_{0}(X)$ for all $\lambda>0$ and
$k\in\mathbb{N}$. Consequently, consider the limit

\begin{equation}u_{\lambda}(x):=\lim_{k\rightarrow\infty}u_{\lambda,k}(x)=\left(\lambda+d(x)^{\frac{n-p-\mu}{n-p}\frac{p}{p-1}}\right)^{-\frac{(p-1)}{q-p}}.\nonumber
\end{equation}
Since the functions $u_{\lambda,k}$ satisfy the inequality
\eqref{eq:31}, we have by an approximation procedure that we can
apply $u_{\lambda}(x)$ for every $\lambda$ to \eqref{eq:31} to get

\begin{align}
\Bigg{(}\int_{X}&{\frac{d(x)^{\gamma r}}{(\lambda+d(x)^{\frac{n-p-\mu}{n-p}\frac{p}{p-1}})^{\frac{r(p-1)}{q-p}}}d\textsf{m}(x)}\Bigg{)}^{\frac{1}{r}} \nonumber \\
&\leq
C\left(\frac{p(n-p-\mu)}{(n-p)(q-p)}\right)^{a}\left(\int_{X}{d(x)^{\alpha
p}d(x)^{\frac{p(n-p(\mu+1))}{(n-p)(p-1)}}(\lambda+d(x)^{\frac{n-p-\mu}{n-p}\frac{p}{p-1}})^{\frac{p(1-q)}{q-p}}d\textsf{m}(x)}\right)^{\frac{a}{p}}\nonumber\times\\
&\times\left(\int_{X}\frac{d(x)^{\beta
q}}{(\lambda+d(x)^{\frac{n-p-\mu}{n-p}\frac{p}{p-1}})^{\frac{q(p-1)}{q-p}}}d\textsf{m}(x)\right)^{\frac{1-a}{q}}\nonumber
\end{align}
which, combining with \eqref{eq:35} gives

\begin{equation}(-F'(\lambda))^{\frac{1}{r}}\leq
C\Bigg{(}\frac{p(n-p-\mu)}{(n-p)(q-p)}\Bigg{)}^{a}\Bigg{(}\frac{r(p-1)-(q-p)}{q-p}F(\lambda)+\lambda
F'(\lambda)\Bigg{)}^{\frac{a}{p}}\Bigg{(}F(\lambda)\frac{r(p-1)-(q-p)}{q-p}\Bigg{)}^{\frac{1-a}{q}}.\nonumber
\end{equation}
Hence, $F$ satisfies the following differential inequality

\begin{equation}\label{eq:42}(-F'(\lambda))^{\frac{p}{ar}}\leq
\Gamma\Bigg{(}\frac{r(p-1)-(q-p)}{q-p}F(\lambda)+\lambda
F'(\lambda)\Bigg{)}F(\lambda)^{\frac{p(1-a)}{aq}}
\end{equation}
where

\begin{equation}
\Gamma:=C^{\frac{p}{a}}\Bigg{(}\frac{p(n-p-\mu)}{(n-p)(q-p)}\Bigg{)}^{p}\Bigg{(}\frac{r(p-1)-(q-p)}{q-p}\Bigg{)}^{\frac{p(1-a)}{aq}}.\nonumber
\end{equation}
By definition, we can easily get that

\begin{equation}\label{eq:38}G(\lambda)=\frac{\omega_{n}(q-p)}{r(p-1)-(q-p)}\int_{0}^{\infty}t^{n+\gamma
r-1}\frac{\Big{[}-\gamma
r\lambda+(\frac{pq}{q-p}\frac{n-p-\mu}{n-p}-\gamma
r)t^{\frac{n-p-\mu}{n-p}\frac{p}{p-1}}\Big{]}}{(\lambda+t^{\frac{n-p-\mu}{n-p}\frac{p}{p-1}})^{\frac{q(p-1)}{q-p}+1}}dt.
\end{equation}

Now, note that for each $\lambda>0$ the function
$z_{\lambda}:\mathbb{R}^{n}\rightarrow\mathbb{R}$ defined by
$z_{\lambda}(x)=(\lambda+|x|^{\frac{n-p-\mu}{n-p}\frac{p}{p-1}})^{-\frac{(p-1)}{q-p}}$
is an extremal function of Caffarelli-Kohn-Nirenberg on
$\mathbb{R}^{n}$, that is,

\begin{equation}\left(\int_{\mathbb{R}^{n}}|x|^{\gamma
r}|z_{\lambda}|^{r}d\textsf{m}_{E}(x)\right)^{\frac{1}{r}}=
C_{opt}(\mathbb{R}^{n})\left(\int_{\mathbb{R}^{n}}|x|^{\alpha
p}|\nabla
z_{\lambda}|^{p}d\textsf{m}_{E}(x)\right)^{\frac{a}{p}}\left(\int_{\mathbb{R}^{n}}|x|^{\beta
q}|z_{\lambda}|^{q}d\textsf{m}_{E}(x)\right)^{\frac{1-a}{q}}\nonumber
\end{equation}
and by the previously arguments, the above equality can be expressed
by

\begin{equation}\label{eq:39}(-G'(\lambda))^{\frac{p}{ar}}=
\widetilde{\Gamma}\Bigg{(}\frac{r(p-1)-(q-p)}{q-p}G(\lambda)+\lambda
G'(\lambda)\Bigg{)}G(\lambda)^{\frac{p(1-a)}{aq}}
\end{equation}
where

\begin{equation}
\widetilde{\Gamma}:=C_{opt}(\mathbb{R}^{n})^{\frac{p}{a}}\Bigg{(}\frac{p(n-p-\mu)}{(n-p)(q-p)}\Bigg{)}^{p}\Bigg{(}\frac{r(p-1)-(q-p)}{q-p}\Bigg{)}^{\frac{p(1-a)}{aq}}.\nonumber
\end{equation}
Substituting

\begin{equation}\label{eq:52}G(\lambda)=\lambda^{\frac{(q-p)(p-1)n-pq(p-1)}{p(q-p)}}G(1)
\end{equation}
into \eqref{eq:39}, we have


\begin{align}\label{eq:40}
\Bigg{(}&-\frac{(q-p)(p-1)n-pq(p-1)}{p(q-p)}\Bigg{)}^{\frac{p}{ar}}\\
\nonumber
&=C_{opt}(\mathbb{R}^{n})^{\frac{p}{a}}\Bigg{[}\left(\frac{p(n-p-\mu)}{(n-p)(q-p)}\right)^{p}\left(\frac{q(p-1)}{q-p}\right)^{\frac{p(1-a)}{aq}}\left(\frac{n(p-1)}{p}\right)G(1)^{\frac{p}{n}}\Bigg{]}.
\end{align}
Consider the constant $A\in\mathbb{R}$ given by
\begin{align}\label{eq:41}
\Bigg{(}&-\frac{(q-p)(p-1)n-pq(p-1)}{p(q-p)}\Bigg{)}^{\frac{p}{ar}}\\
\nonumber
&=C^{\frac{p}{a}}\Bigg{[}\left(\frac{p(n-p-\mu)}{(n-p)(q-p)}\right)^{p}\left(\frac{q(p-1)}{q-p}\right)^{\frac{p(1-a)}{aq}}\left(\frac{n(p-1)}{p}\right)A^{\frac{p}{n}}\Bigg{]}.
\end{align}
By a direct calculation you can easily verify that the function

\begin{equation}H_{0}(\lambda)=A\lambda^{\frac{(q-p)(p-1)n-pq(p-1)}{p(q-p)}},\hspace{0,3cm}\lambda\in(0,\infty)\nonumber
\end{equation}
satisfies the differential equation

\begin{equation}\label{eq:44}(-H_{0}'(\lambda))^{\frac{p}{ar}}=
\Gamma\Bigg{(}\frac{r(p-1)-(q-p)}{q-p}H_{0}(\lambda)+\lambda
H_{0}'(\lambda)\Bigg{)}H_{0}(\lambda)^{\frac{p(1-a)}{aq}}
\end{equation}
where
\begin{equation}
\Gamma=C^{\frac{p}{a}}\Bigg{(}\frac{p(n-p-\mu)}{(n-p)(q-p)}\Bigg{)}^{p}\Bigg{(}\frac{r(p-1)-(q-p)}{q-p}\Bigg{)}^{\frac{p(1-a)}{aq}}.\nonumber
\end{equation}
It follows from \eqref{eq:40} and \eqref{eq:41} that

\begin{equation}A=\left(\frac{C_{opt}(\mathbb{R}^{n})}{C}\right)^{\frac{n}{a}}G(1)\nonumber
\end{equation}
and so,

\begin{eqnarray}\label{eq:101}
H_{0}(\lambda)&=&A\lambda^{\frac{(q-p)(p-1)n-pq(p-1)}{p(q-p)}}\nonumber\\
&=&\left(\frac{C_{opt}(\mathbb{R}^{n})}{C}\right)^{\frac{n}{a}}G(1)\lambda^{\frac{(q-p)(p-1)n-pq(p-1)}{p(q-p)}}\nonumber\\
&=&\left(\frac{C_{opt}(\mathbb{R}^{n})}{C}\right)^{\frac{n}{a}}G(\lambda).
\end{eqnarray}
Now, we claim that if $F(\lambda_{0})<H_{0}(\lambda_{0})$ for some
$\lambda_{0}>0$ then $F(\lambda)<H_{0}(\lambda)$, $\forall
\lambda\in (0,\lambda_{0}]$. Indeed, suppose that there exists some
$\lambda_{1}\in(0,\lambda_{0})$ such that $F(\lambda_{1})\geq
H_{0}(\lambda_{1})$ and set

\begin{equation}\lambda_{2}:=\sup\{\lambda<\lambda_{0}; F(\lambda)\geq
H_{0}(\lambda)\}.\nonumber
\end{equation}
Then $F(\lambda)\leq H_{0}(\lambda)$ for all
$\lambda\in[\lambda_{2},\lambda_{0}]$, and so, we have from
\eqref{eq:42} that

\begin{eqnarray}\label{eq:43}(-F'(\lambda))^{\frac{p}{ar}}&\leq&
\Gamma\Bigg{(}\frac{r(p-1)-(q-p)}{q-p}F(\lambda)+\lambda
F'(\lambda)\Bigg{)}F(\lambda)^{\frac{p(1-a)}{aq}}\nonumber\\
&\leq&\Gamma\Bigg{(}\frac{r(p-1)-(q-p)}{q-p}H_{0}(\lambda)+\lambda
F'(\lambda)\Bigg{)}H_{0}(\lambda)^{\frac{p(1-a)}{aq}}.
\end{eqnarray}
For each $\lambda>0$, consider the function
$\varphi_{\lambda}:[0,\infty)\rightarrow\mathbb{R}$ defined by

\begin{equation}\varphi_{\lambda}(t)=t^{\frac{p}{ar}}+t\lambda\Gamma
H_{0}(\lambda)^{\frac{p(1-a)}{aq}}.\nonumber
\end{equation}
Thus, by \eqref{eq:44} and \eqref{eq:43}, we have

\begin{eqnarray}\varphi_{\lambda}(-F'(\lambda))&=&(-F'(\lambda))^{\frac{p}{ar}}-\Gamma\lambda
F'(\lambda)H_{0}(\lambda)^{\frac{p(1-a)}{aq}}\nonumber\\
&\leq&\Gamma\left(\frac{r(p-1)-(q-p)}{q-p}\right)H_{0}(\lambda)H_{0}(\lambda)^{\frac{p(1-a)}{aq}}\nonumber\\
&=&(-H_{0}'(\lambda))^{\frac{p}{ar}}-\Gamma\lambda
H_{0}'(\lambda)H_{0}(\lambda)^{\frac{p(1-a)}{aq}}\nonumber\\
&=&\varphi_{\lambda}(-H_{0}'(\lambda)).\nonumber
\end{eqnarray}
For each fixed $\lambda>0$ we can easily notice that
$\varphi_{\lambda}$ is a non-decreasing function, so we conclude by
the above inequality that

\begin{equation}-F'(\lambda)\leq-H_{0}'(\lambda), \hspace{0,2cm}
\forall \lambda\in[\lambda_{2},\lambda_{0}]
\end{equation}
consequently

\begin{equation}0\leq(F-H_{0})(\lambda_{2})\leq(F-H_{0})(\lambda_{0})<0\nonumber
\end{equation}
which is a contradiction.

By the condition \eqref{eq:33}, we know that given $\epsilon>0$,
there exist $\delta>0$ such that

\begin{equation}h\leq\delta\hspace{0,2cm}\Rightarrow\hspace{0,2cm}
(1-\epsilon)\textsf{m}_{E}(\mathbb{B}_{h}(0))\leq
\textsf{m}(B_{h}(x_{0}))\nonumber
\end{equation}
It then folows that

\begin{eqnarray}
F(\lambda)&=&\frac{q-p}{r(p-1)-(q-p)}\int_{0}^{\infty}\textsf{m}(B_{h}(x_{0}))h^{\gamma
r-1}\frac{\Big{[}-\gamma r\lambda+(\frac{pq}{q-p}\frac{n-p-\mu}{n-p}-\gamma r)h^{\frac{n-p-\mu}{n-p}\frac{p}{p-1}}\Big{]}}{(\lambda+h^{\frac{n-p-\mu}{n-p}\frac{p}{p-1}})^{\frac{q(p-1)}{q-p}+1}}dh\nonumber\\
&\geq&\frac{q-p}{r(p-1)-(q-p)}\int_{0}^{\delta}\textsf{m}(B_{h}(x_{0}))h^{\gamma
r-1}\frac{\Big{[}-\gamma r\lambda+(\frac{pq}{q-p}\frac{n-p-\mu}{n-p}-\gamma r)h^{\frac{n-p-\mu}{n-p}\frac{p}{p-1}}\Big{]}}{(\lambda+h^{\frac{n-p-\mu}{n-p}\frac{p}{p-1}})^{\frac{q(p-1)}{q-p}+1}}dh\nonumber\\
&\geq&\frac{(q-p)(1-\epsilon)}{r(p-1)-(q-p)}\int_{0}^{\delta}\textsf{m}_{E}(\mathbb{B}_{h}(0))h^{\gamma
r-1}\frac{\Big{[}-\gamma r\lambda+(\frac{pq}{q-p}\frac{n-p-\mu}{n-p}-\gamma r)h^{\frac{n-p-\mu}{n-p}\frac{p}{p-1}}\Big{]}}{(\lambda+h^{\frac{n-p-\mu}{n-p}\frac{p}{p-1}})^{\frac{q(p-1)}{q-p}+1}}dh\nonumber\\
&\geq&\Theta\int_{0}^{\Delta}\textsf{m}_{E}(\mathbb{B}_{s}(0))s^{\gamma
r-1}\frac{\Big{[}-\gamma r+(\frac{pq}{q-p}\frac{n-p-\mu}{n-p}-\gamma
r)s^{\frac{n-p-\mu}{n-p}\frac{p}{p-1}}\Big{]}}{(1+s^{\frac{n-p-\mu}{n-p}\frac{p}{p-1}})^{\frac{q(p-1)}{q-p}+1}}ds\nonumber
\end{eqnarray}
where
\begin{equation}\Theta=\frac{(q-p)(1-\epsilon)}{r(p-1)-(q-p)}\lambda^{\frac{(q-p)(p-1)n-pq(p-1)}{p(q-p)}}\hspace{1cm} \mbox{and} \hspace{1cm} \Delta=\frac{\delta}{\lambda^{\frac{(n-p)(p-1)}{p(n-p-\mu)}}}.\nonumber
\end{equation}
On the other hand, from \eqref{eq:38}, we have

\begin{eqnarray}
G(\lambda)&=&\frac{q-p}{r(p-1)-(q-p)}\int_{\mathbb{R}^{n}}\frac{|x|^{
\gamma
r}}{(\lambda+|x|^{\frac{n-p-\mu}{n-p}\frac{p}{p-1}})^{\frac{q(p-1)}{q-p}}}d\textsf{m}_{E}(x)\nonumber\\
&=&\frac{\Theta}{1-\epsilon}\int_{0}^{\infty}\textsf{m}_{E}(\mathbb{B}_{s}(0))s^{\gamma
r-1}\frac{\Big{[}-\gamma r+(\frac{pq}{q-p}\frac{n-p-\mu}{n-p}-\gamma
r)s^{\frac{n-p-\mu}{n-p}\frac{p}{p-1}}\Big{]}}{(1+s^{\frac{n-p-\mu}{n-p}\frac{p}{p-1}})^{\frac{q(p-1)}{q-p}+1}}ds.\nonumber
\end{eqnarray}
Thus

\begin{equation}\frac{F(\lambda)}{G(\lambda)}\geq(1-\epsilon)\frac{\int_{0}^{\Delta}\textsf{m}_{E}(\mathbb{B}_{s}(0))s^{\gamma r-1}\frac{\Big{[}-\gamma
r+(\frac{pq}{q-p}\frac{n-p-\mu}{n-p}-\gamma
r)s^{\frac{n-p-\mu}{n-p}\frac{p}{p-1}}\Big{]}}{(1+s^{\frac{n-p-\mu}{n-p}\frac{p}{p-1}})^{\frac{q(p-1)}{q-p}+1}}ds}{\int_{0}^{\infty}\textsf{m}_{E}(\mathbb{B}_{s}(0))s^{\gamma
r-1}\frac{\Big{[}-\gamma r+(\frac{pq}{q-p}\frac{n-p-\mu}{n-p}-\gamma
r)s^{\frac{n-p-\mu}{n-p}\frac{p}{p-1}}\Big{]}}{(1+s^{\frac{n-p-\mu}{n-p}\frac{p}{p-1}})^{\frac{q(p-1)}{q-p}+1}}ds}.
\end{equation}
Hence

\begin{equation}\liminf_{\lambda\rightarrow0}\frac{F(\lambda)}{G(\lambda)}\geq
1-\epsilon.\nonumber
\end{equation}
Letting $\epsilon\rightarrow 0$, we get
\begin{equation}\label{eq:102}\liminf_{\lambda\rightarrow0}\frac{F(\lambda)}{G(\lambda)}\geq 1.
\end{equation}

Now, since $C_{opt}(\mathbb{R}^{n})<C$, we have from \eqref{eq:101}
and \eqref{eq:102} that

\begin{eqnarray}\liminf_{\lambda\rightarrow0}\frac{F(\lambda)}{H_{0}(\lambda)}&=&\liminf_{\lambda\rightarrow0}\frac{F(\lambda)}{G(\lambda)}\left(\frac{C}{C_{opt}(\mathbb{R}^{n})}\right)^{\frac{n}{a}}\nonumber\\
&\geq&\left(\frac{C}{C_{opt}(\mathbb{R}^{n})}\right)^{\frac{n}{a}}\nonumber\\
&>&1.\nonumber
\end{eqnarray}
The above claim implies that
\begin{equation}F(\lambda)\geq H_{0}(\lambda), \hspace{0,3cm}\forall
\lambda>0\nonumber
\end{equation}
that is,

\begin{equation}F(\lambda)\geq \left(\frac{C_{opt}(\mathbb{R}^{n})}{C}\right)^{\frac{n}{a}}G(\lambda), \hspace{0,3cm}\forall
\lambda>0.\nonumber
\end{equation}
\end{proof}


\begin{myproof}[\textbf{Proof of Theorem 1.3}]Let us separate the proof into two
cases.

Case 1: $C>C_{opt}(\mathbb{R}^{n})$. In order to simplify the
calculations we will consider:

\begin{equation}\psi(h)=h^{\gamma
r-1}\frac{\Big{[}-\gamma
r\lambda+(\frac{pq}{q-p}\frac{n-p-\mu}{n-p}-\gamma
r)h^{\frac{n-p-\mu}{n-p}\frac{p}{p-1}}\Big{]}}{(\lambda+h^{\frac{n-p-\mu}{n-p}\frac{p}{p-1}})^{\frac{q(p-1)}{q-p}+1}}.
\end{equation}
From Lemma \ref{eq:46}, we know that

\begin{equation}\label{eq:48}\int_{0}^{\infty}[\textsf{m}(B_{h}(x_{0}))-d_{1}\textsf{m}_{E}(\mathbb{B}_{h}(0))]\psi(h)dh\geq0
\end{equation}
where

\begin{equation}d_{1}:=\left(\frac{C_{opt}(\mathbb{R}^{n})}{C}\right)^{\frac{n}{a}}.\nonumber
\end{equation}
From \eqref{eq:32}, for fixed $h>0$ , we have

\begin{equation}\frac{\textsf{m}(B_{R}(x_{0}))}{\textsf{m}_{E}(\mathbb{B}_{R}(0))}\leq
C_{0}\frac{\textsf{m}(B_{h}(x_{0}))}{\textsf{m}_{E}(\mathbb{B}_{h}(0))},\hspace{0,2cm}
\forall R>h\geq0.\nonumber
\end{equation}
Thus, consider
\begin{equation}d_{0}:=\limsup_{R\rightarrow\infty}\frac{\textsf{m}(B_{R}(x_{0}))}{\textsf{m}_{E}(\mathbb{B}_{R}(0))}.\nonumber
\end{equation}

Note that to prove \eqref{eq:54} in the case where
$C_{opt}(\mathbb{R}^{n})<C$, it is sufficient to prove that
$d_{1}\leq d_{0}$. We argue by contradiction, suppose that
$d_{0}<d_{1}$, then by definition of $d_{0}$, there exist
$\epsilon_{0}>0$ such that for some $h_{0}>0$,

\begin{equation}\label{eq:47}\frac{\textsf{m}(B_{h}(x_{0}))}{\textsf{m}_{E}(\mathbb{B}_{h}(0))}\leq
d_{1}-\epsilon_{0},\hspace{0,2cm} \forall h\geq h_{0}.
\end{equation}
It follows from \eqref{eq:32} and \eqref{eq:33} that

\begin{equation}\label{eq:49}\textsf{m}(B_{h}(x_{0}))\leq C_{0}\textsf{m}_{E}(\mathbb{B}_{h}(0)).
\end{equation}
Hence, substituting \eqref{eq:47} into \eqref{eq:48} and considering
inequality \eqref{eq:49}, we have

\begin{eqnarray}\label{eq:50}0&\leq&\int_{0}^{\infty}[\textsf{m}(B_{h}(x_{0}))-d_{1}\textsf{m}_{E}(\mathbb{B}_{h}(0))]\psi(h)dh\nonumber\\
&\leq&\int_{0}^{h_{0}}\textsf{m}(B_{h}(x_{0}))\psi(h)dh+(d_{1}-\epsilon_{0})\int_{h_{0}}^{\infty}\textsf{m}_{E}(\mathbb{B}_{h}(0))\psi(h)dh-d_{1}\int_{0}^{\infty}\textsf{m}_{E}(\mathbb{B}_{h}(0))\psi(h)dh\nonumber\\
&\leq&C_{0}\int_{0}^{h_{0}}\textsf{m}(\mathbb{B}_{h}(0))\psi(h)dh-d_{1}\int_{0}^{h_{0}}\textsf{m}_{E}(\mathbb{B}_{h}(0))\psi(h)dh-\epsilon_{0}\int_{h_{0}}^{\infty}\textsf{m}_{E}(\mathbb{B}_{h}(0))\psi(h)dh\nonumber\\
&=&(C_{0}-d_{1}+\epsilon_{0})\int_{0}^{h_{0}}\textsf{m}(\mathbb{B}_{h}(0))\psi(h)dh-\epsilon_{0}\int_{0}^{\infty}\textsf{m}(\mathbb{B}_{h}(0))\psi(h)dh\nonumber\\
&=&(C_{0}-d_{1}+\epsilon_{0})\int_{0}^{h_{0}}\textsf{m}(\mathbb{B}_{h}(0))\psi(h)dh-\epsilon_{0}\left(\frac{r(p-1)-(q-p)}{q-p}\right)G(\lambda).
\end{eqnarray}
Since $\lambda\leq(\lambda+h^{\frac{n-p-\mu}{n-p}\frac{p}{p-1}})$,
we have that
$\frac{1}{(\lambda+h^{\frac{n-p-\mu}{n-p}\frac{p}{p-1}})}\leq\frac{1}{\lambda}$,
and then

\begin{eqnarray}\label{eq:51}\int_{0}^{h_{0}}h^{n}\psi(h)dh&=&\int_{0}^{h_{0}}h^{n+\gamma
r-1}\frac{\Big{[}-\gamma r\lambda+(\frac{pq}{q-p}\frac{n-p-\mu}{n-p}-\gamma r)h^{\frac{n-p-\mu}{n-p}\frac{p}{p-1}}\Big{]}}{(\lambda+h^{\frac{n-p-\mu}{n-p}\frac{p}{p-1}})^{\frac{q(p-1)}{q-p}+1}}dh\nonumber\\
&\leq&\int_{0}^{h_{0}}h^{n+\gamma r-1}\frac{\Big{[}-\gamma
r\lambda+(\frac{pq}{q-p}\frac{n-p-\mu}{n-p}-\gamma r)h^{\frac{n-p-\mu}{n-p}\frac{p}{p-1}}\Big{]}}{\lambda^{\frac{q(p-1)}{q-p}+1}}dh\nonumber\\
&=&\lambda^{-\frac{q(p-1)}{q-p}-1}\int_{0}^{h_{0}}h^{n+\gamma
r-1}\Big{[}-\gamma r\lambda+(\frac{pq}{q-p}\frac{n-p-\mu}{n-p}-\gamma r)h^{\frac{n-p-\mu}{n-p}\frac{p}{p-1}}\Big{]}dh\nonumber\\
&=&\lambda^{-\frac{q(p-1)}{q-p}-1}\Bigg{[}-\frac{\gamma r\lambda
h_{0}^{n+\gamma r}}{n+\gamma
r}+\frac{(\frac{pq}{q-p}\frac{n-p-\mu}{n-p}-\gamma r)h_{0}^{n+\gamma
r+\frac{n-p-\mu}{n-p}\frac{p}{p-1}}}{n+\gamma
r+\frac{n-p-\mu}{n-p}\frac{p}{p-1}}\Bigg{]}.
\end{eqnarray}
Substituting \eqref{eq:51} and \eqref{eq:52} into \eqref{eq:50}, we
have

\begin{equation}\label{eq:53}\frac{\epsilon_{0}\left(\frac{r(p-1)-(q-p)}{q-p}\right)G(1)}{\omega_{n}\left(C_{0}-d_{1}+\epsilon_{0}\right)}\leq
\lambda^{\eta}\Bigg{[}-\frac{\gamma r\lambda h_{0}^{n+\gamma
r}}{n+\gamma r}+\frac{(\frac{pq}{q-p}\frac{n-p-\mu}{n-p}-\gamma
r)h_{0}^{n+\gamma r+\frac{n-p-\mu}{n-p}\frac{p}{p-1}}}{n+\gamma
r+\frac{n-p-\mu}{n-p}\frac{p}{p-1}}\Bigg{]}
\end{equation}
where
\begin{equation}\eta=-\frac{q(p-1)}{q-p}-1-\frac{(q-p)(p-1)n-pq(p-1)}{p(q-p)}.\nonumber
\end{equation}
But,

\begin{equation}\eta <0\hspace{0,2cm} \text{and}\hspace{0,2cm}\eta+1=-\frac{n(p-1)}{p}<0.\nonumber
\end{equation}
Then, letting $\lambda\rightarrow\infty$ one obtains a contradiction
by \eqref{eq:53}. This complete the proof of Theorem $1.3$ in the
case $C>C_{opt}(\mathbb{R}^{n})$.

Case 2: $C=C_{opt}(\mathbb{R}^{n})$. In this case we have for any
fixed $\delta>0$ that

\begin{equation}\left(\int_{X}d(x,x_{0})^{\gamma
r}|u|^{r}d\textsf{m}(x)\right)^{\frac{1}{r}}\leq(
C_{opt}(\mathbb{R}^{n})+\delta)\left(\int_{X}d(x,x_{0})^{\alpha
p}|D
u|^{p}d\textsf{m}(x)\right)^{\frac{a}{p}}\left(\int_{X}d(x,x_{0})^{\beta
q}|u|^{q}d\textsf{m}(x)\right)^{\frac{1-a}{q}}.\nonumber\\
\end{equation}
Thus, we have from \textit{Case} $1$ that

\begin{equation}\textsf{m}(B_{\rho}(x))\geq
C_{0}^{-1}\left(\frac{C_{opt}(\mathbb{R}^{n})}{C_{opt}(\mathbb{R}^{n})+\delta}\right)^{\frac{n}{a}}\textsf{m}_{E}(\mathbb{B}_{\rho}(0)),\hspace{0,2cm}
\forall \rho>0,\hspace{0,2cm} and\hspace{0,2cm} x\in X.\nonumber
\end{equation}
Letting $\delta\rightarrow0$, one obtains that

\begin{equation}\textsf{m}(B_{\rho}(x))\geq
C_{0}^{-1}\textsf{m}_{E}(\mathbb{B}_{\rho}(0)),\hspace{0,2cm}
\forall \rho>0,\hspace{0,2cm} and\hspace{0,2cm} x\in X.\nonumber
\end{equation}
This completes the proof of theorem $1.3$.
\end{myproof}

\end{section}

\begin{section}{Proof of Theorem \ref{eq:18}}

\begin{proof}We argue by contradiction, suppose that
$C<C_{opt}(\mathbb{R}^{n})$ and
\begin{equation}\left(\int_{M}d(x)^{\gamma r}|u|^{r}dv\right)^{\frac{1}{r}}\leq C\left(\int_{M}d(x)^{\alpha p}|\nabla
u|^{p}dv\right)^{\frac{a}{p}}\left(\int_{M}d(x)^{\beta
q}|u|^{q}dv\right)^{\frac{1-a}{q}} , \hspace{0,3cm}\forall u\in
\mathcal{C}_{0}^{\infty}(M)\label{eq:01}
\end{equation}

Given $\epsilon>0$ there exist a chart $(\Omega,\phi)$ of $M$ at
$x_{0}$ and a $\delta>0$ such that $\phi(\Omega)=B_{\delta}(0)$, the
Euclidean ball of radius $\delta$ centered at the origin in
$\mathbb{R}^{n}$, and that the components
 $g_{ij}$ of $g$ in this chart satisfy

\begin{equation}\frac{1}{(1+\epsilon)}\delta_{ij}\leq
g_{ij}\leq(1+\epsilon)\delta_{ij}\label{eq:02}
\end{equation}
in the sense of bilinear form (see \cite{Aubi}). We claim that by
choosing $\epsilon>0$ small enough we get by \eqref{eq:01} that
there exist $\delta_{0}>0$ and $C'<C_{opt}(\mathbb{R}^{n})$ such
that $\forall f\in \mathcal{C}_{0}^{\infty}(B_{\delta_{0}}(0))$,

\begin{equation}\label{eq:103}\left(\int_{B_{\delta_{0}}(0)}|x|^{\gamma r}|f|^{r}dx\right)^{\frac{1}{r}}\leq C'\left(\int_{B_{\delta_{0}}(0)}|x|^{\alpha p}|\nabla
f|^{p}dx\right)^{\frac{a}{p}}\left(\int_{B_{\delta_{0}}(0)}|x|^{\beta
q}|f|^{q}dx\right)^{\frac{1-a}{q}}.
\end{equation}

Indeed, if $f\in \mathcal{C}_{0}^{\infty}(B_{\delta_{0}}(0))$, then
$u:=f\circ exp_{p}^{-1}\in\mathcal{C}_{0}^{\infty}(\Omega)$.
Substituting $u$ into \eqref{eq:01} and using the metric estimates
\eqref{eq:02}, we obtain

\begin{eqnarray}
\left(\int_{B_{\delta_{0}}(0)}|x|^{\gamma
r}|f|^{r}dx\right)^{\frac{1}{r}}\leq
C'\left(\int_{B_{\delta_{0}}(0)}|x|^{\alpha p}|\nabla
f|^{p}dx\right)^{\frac{a}{p}}\left(\int_{B_{\delta_{0}}(0)}|x|^{\beta
q}|f|^{q}dx\right)^{\frac{1-a}{q}}\nonumber
\end{eqnarray}
where
$C'=(1+\epsilon)^{\frac{n}{2r}}(1+\epsilon)^{\frac{an}{2p}+\frac{n(1-a)}{2q}+\frac{a}{2}}C$.
Since $C<C_{opt}(\mathbb{R}^{n})$ we know that if $\epsilon$ is
small enough then $C'<C_{opt}(\mathbb{R}^{n})$. This proves our
claim.

Let $u\in\mathcal{C}_{0}^{\infty}(\mathbb{R}^{n})$. Set
$u_{\lambda}(x)=u(\lambda x)$, $\lambda>0$. For $\lambda$ large
enough $u_{\lambda}(x)\in
\mathcal{C}_{0}^{\infty}(B_{\delta_{0}}(0))$. Substituting
$u_{\lambda}$ into \eqref{eq:103}, we get

\begin{equation}\label{eq:03}\left(\int_{\mathbb{R}^{n}}|x|^{\gamma r}|u_{\lambda}|^{r}dx\right)^{\frac{1}{r}}\leq C'\left(\int_{\mathbb{R}^{n}}|x|^{\alpha p}|\nabla
u_{\lambda}|^{p}dx\right)^{\frac{a}{p}}\left(\int_{\mathbb{R}^{n}}|x|^{\beta
q}|u_{\lambda}|^{q}dx\right)^{\frac{1-a}{q}}.
\end{equation}
Using a change of variables, we have

\begin{equation}\left(\int_{\mathbb{R}^{n}}|x|^{\gamma r}|u_{\lambda}(x)|^{r}dx\right)^{\frac{1}{r}}=\lambda^{-\frac{n}{r}}\lambda^{-\gamma}\left(\int_{\mathbb{R}^{n}}|y|^{\gamma r}|u(y)|^{r}dy\right)^{\frac{1}{r}},\nonumber
\end{equation}

\begin{equation}\left(\int_{\mathbb{R}^{n}}|x|^{\alpha p}|\nabla
u_{\lambda}(x)|^{p}dx\right)^{\frac{a}{p}}=\lambda^{-\frac{na}{p}}\lambda^{-\alpha
a}\lambda^{a}\left(\int_{\mathbb{R}^{n}}|y|^{\alpha p}|\nabla
u(y)|^{p}dy\right)^{\frac{a}{p}}\nonumber
\end{equation}
and

\begin{equation}\left(\int_{\mathbb{R}^{n}}|x|^{\beta q}|u_{\lambda}(x)|^{q}dx\right)^{\frac{1-a}{q}}=\lambda^{-\frac{n(1-a)}{q}}\lambda^{-\beta(1-a)}\left(\int_{\mathbb{R}^{n}}|y|^{\beta q}|u(y)|^{q}dy\right)^{\frac{1-a}{q}}.\nonumber
\end{equation}
Combining the above equations with \eqref{eq:03}, we get

\begin{equation}\left(\int_{\mathbb{R}^{n}}|y|^{\gamma r}|u(y)|^{r}dy\right)^{\frac{1}{r}}\leq
C'\lambda^{-\alpha
a-\frac{na}{p}+a-\frac{n(1-a)}{q}-\beta(1-a)+\frac{n}{r}+\gamma}\left(\int_{\mathbb{R}^{n}}|y|^{\alpha
p}|\nabla
u(y)|^{p}dy\right)^{\frac{a}{p}}\left(\int_{\mathbb{R}^{n}}|y|^{\beta
q}|u(y)|^{q}dy\right)^{\frac{1-a}{q}}.\nonumber
\end{equation}
It follows from the conditions of Theorem \ref{eq:0} that

\begin{eqnarray}
-\alpha
a-\frac{na}{p}+a-\frac{n(1-a)}{q}-\beta(1-a)+\frac{n}{r}+\gamma=0.\nonumber
\end{eqnarray}
Hence, we have

\begin{equation}\left(\int_{\mathbb{R}^{n}}|y|^{\gamma r}|u(y)|^{r}dy\right)^{\frac{1}{r}}\leq
C'\left(\int_{\mathbb{R}^{n}}|y|^{\alpha p}|\nabla
u(y)|^{p}dy\right)^{\frac{a}{p}}\left(\int_{\mathbb{R}^{n}}|y|^{\beta
q}|u(y)|^{q}dy\right)^{\frac{1-a}{q}}.\nonumber
\end{equation}
This expression contradicts the fact that $C_{opt}(\mathbb{R}^{n})$
is the best constant for this inequality on $\mathbb{R}^{n}$.
\end{proof}

\end{section}

\begin{section}{Proof of Theorems \ref{eq:58}, \ref{114} and \ref{117}}
In this section, we will briefly mention some basic definitions and
notions in Finsler geometry. There are many good references in the
subject, we refer readers to \cite{Bao} and \cite{Shen}.

\begin{subsection}{Finsler Geometry}

\begin{definition}(Finslerian Structure)A Finslerian structure is a pair
$(M^{n},F)$ consisting of a connected $C^{\infty}$ manifold and a
continuous function
\begin{equation}F:TM\rightarrow [0,\infty)\nonumber
\end{equation}
satisfying the following properties
\begin{itemize}
  \item $F\in \mathcal{C}^{\infty}(TM-\{0\});$
  \item $F(x,ty)=tF(x,y),\hspace{0,2cm}\forall
  t\geq0\hspace{0,2cm} \text{and} \hspace{0,2cm}(x,y)\in TM;$
  \item The $n\times n$ matrix
  \begin{equation}\label{eq:55}(g_{ij}):=\Bigg{(}\Big{[}\frac{1}{2}F^{2}\Big{]}_{\partial y^{i}\partial y^{j}}\Bigg{)}, \hspace{0,2cm}y=\sum_{i=1}^{n}y^{i}\frac{\partial}{\partial x^{i}}
  \end{equation}
  is positive definite for all $(x,y)\in TM-\{0\}.$
\end{itemize}
\end{definition}
A Finsler manifold $(M,F)$ is called a \textit{locally Minkowski
space} if there exist certain privileged local coordinate system
$(x^{i})$ on $M$, such that in each coordinated neighborhood we have
that $F(x,y)$ depends only on $y$ and not on $x$. On the other hand,
a \textit{Minkowski space} consist of a finite dimensional vector
space $V$ and a Minkowski norm which induces a Finsler metric on $V$
by translation.

We consider on the pull-back bundle $\pi^{\ast} TM$ the Chern
connection [see Bao et al. \cite{Bao}, Theorem 2.4.1]. The
coefficients of the Chern connection are given by

\begin{equation}\label{eq3}\Gamma_{jk}^{i}(x,y)=\frac{1}{2}g^{il}\left({\frac{\partial
g_{lj}}{\partial x_{k}}}-{\frac{\partial g_{jk}}{\partial
x_{l}}}+{\frac{\partial g_{kl}}{\partial x_{j}}}-{\frac{\partial
g_{ij}}{\partial y_{r}}}G_{k}^{r}+{\frac{\partial g_{jk}}{\partial
y_{r}}}G_{l}^{r}-{\frac{\partial g_{kl}}{\partial
y_{r}}}G_{j}^{r}\right)
\end{equation}
where $G_{j}^{i}=\frac{\partial G^{i}}{\partial y^{j}}$ and

$$G^{i}(x,y)=\frac{1}{4}g^{ik}\left({2\frac{\partial g_{jk}}{\partial
x_{l}}}-{\frac{\partial g_{jl}}{\partial x_{k}}}\right)y_{i}y_{j}.$$

With this connection we consider the following space
\begin{definition}(Berwaldian Structure) A Finsler manifold is a Berwald space if the
coefficients of $\Gamma_{jk}^{i}(x,y)$ given by expression
\eqref{eq3} in natural coordinates are independent of $y$.
\end{definition}

A geodesic between two points $x,y\in M$ is a smooth curve
$\tau:[0,1]\rightarrow\mathbb{R}$ minimizing the following
functional

\begin{equation}\sigma\mapsto L_{F}(\tau)=\int_{0}^{l}F(\tau,\dot{\tau})dt\nonumber
\end{equation}
and the distance function is given by
$d_{F}(x_{1},x_{2}):=inf_{\tau}L_{F}(\tau)$, where $\tau$ varies
over all smooth curves connecting $x_{1}$ to $x_{2}$. A Finsler
manifold $(M,F)$ is said to be complete if any geodesic
$\tau:[0,l]\rightarrow M$ can be extended to a geodesic
$\tau:\mathbb{R}\rightarrow M$.

Let $\tau:[0,l]\rightarrow M$ be a geodesic with velocity field
$\dot{\tau}$. A vector field $J$ along $\tau$ is said to be a
\textit{Jacobi field} if it satisfies the equation

\begin{equation}\label{eq:57}D_{\dot{\tau}}^{\dot{\tau}}D_{\dot{\tau}}^{\dot{\tau}}J+R^{\dot{\tau}}(J,\dot{\tau})\dot{\tau}=0
\end{equation}
where $D^{\dot{\tau}}$ is the covariant derivative with reference
vector $\dot{\tau}$, and $R^{\dot{\tau}}$ is the curvature tensor
(see \cite{Bao} for details).

For a flag $\mathcal{P}:=span\{v,w\}\subset T_{x}M$, with flag pole
$v$, the \textit{flag curvature} is defined by

\begin{equation}K(\mathcal{P},v):=\frac{\langle R^{v}(w,v)v,w\rangle_{v}}{F(v)^{2}\langle w,w\rangle_{v}-\langle
v,w\rangle_{v}^{2}}\nonumber
\end{equation}
where $\langle,\rangle_{v}$ denotes the inner product induced by
\eqref{eq:55}. In the Riemannian case the flag curvature reduces to
the sectional curvature  which depends only on $\mathcal{P}$.

Consider $v\in T_{x}M$ with $F(x,v)=1$ and let $\{e_{i}\}_{i=1}^{n}$
with $e_{n}=v$ be an orthonormal basis of
$(T_{x}M,\langle,\rangle_{v})$. Put
$\mathcal{P}_{i}=span\{e_{i},v\}$ for $1\leq i\leq n-1$. Then the
Ricci curvature of $v$ is defined by
\begin{equation}Ric(v):=\sum_{i=1}^{n-1}K(\mathcal{P}_{i},v).\nonumber
\end{equation}
For $c\geq0$, we also set $Ric(cv):=c^{2}Ric(v)$.

Motivated by the work of Lott-Villani \cite{Lott} and Sturm
\cite{Sturm} on metric measure space, Ohta in \cite{Oh} introduce
the notion of weighted Ricci curvature on Finsler manifolds as
follow; consider $\textsf{m}$ be a positive measure on $(M,F)$,
given a unit vector $v\in T_{x}M$ extend it to a $C^{\infty}$ vector
field $V$ on a neighborhood $U_{x}$ of $x$ such that every integral
curve is a geodesic, and decompose $\textsf{m}$ as
$\textsf{m}=e^{-\psi}vol_{V}$ on $U_{x}$, where $vol_{V}$ denotes
the volume form of the Riemannian structure $g_{V}$. In what follows

\begin{definition}(\textrm{Weighted Ricci Curvature})For $N\in[n,\infty]$ and a unit vector $v\in
T_{p}M$ the $N$-Ricci curvature $Ric_{N}$ is defined by

\begin{enumerate}
  \item  $Ric_{n}(v) := \begin{cases}
                        Ric(v)+(\psi\circ\sigma)''(0)\hspace{0,2cm} \text{if}\hspace{0,2cm} (\psi\circ\sigma)'(0)=0\\
                        -\infty \hspace{0,2cm}\text{otherwise}\end{cases}$
  \item $Ric_{N}(v):=Ric(v)+(\psi\circ\sigma)''(0)-\frac{(\psi\circ\sigma)'(0)^{2}}{N-n}\hspace{0,2cm}\text{for} \hspace{0,2cm}N\in (n,\infty)$
  \item $Ric_{\infty}(v):=Ric(v)+(\psi\circ\sigma)''(0)$
\end{enumerate}
For $c\geq0$, we also define $Ric_{N}(cv):=c^{2}Ric_{N}(v)$.
\end{definition}

Inspired by the $Ric_{N}$ concept, Ohta in \cite{Oh} proved the
following Bishop-Gromov-type volume comparison theorem.

\begin{theorem}\label{eq:56}{\textrm{(\cite{Oh}, Theorem 7.3)}} Let $(M,F,\textsf{m})$
be a complete $n$-dimensional Finsler manifold with non-negative
$N$-Ricci curvature. Then we have
\begin{equation}\frac{\textsf{m}(B_{R}(x))}{\textsf{m}(B_{\rho}(x))}\leq\left(\frac{R}{\rho}\right)^{N},\hspace{0,2cm}\forall
x\in M, \hspace{0,2cm} e \hspace{0,2cm}0<\rho<R.
\end{equation}
Moreover, if equality holds with $N=n$ for all $x\in M$ and $0<r<R$,
then any Jacobi field $J$ along a geodesic $\tau$ has the form
$J(t)=tP(t)$, where $P$ is a parallel vector field along $\tau$.
\end{theorem}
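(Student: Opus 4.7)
The plan is to follow the standard Bishop--Gromov-type comparison argument, adapted to the Finsler setting with weighted $N$-Ricci curvature. First I would fix $x\in M$ and work in polar coordinates based at $x$ via the exponential map $\exp_x\colon T_xM\to M$. For a unit vector $v\in S_xM$, set $\gamma_v(t)=\exp_x(tv)$ and let $\sigma(t,v)$ be the Jacobian of $\exp_x$ at $tv$, i.e., the determinant of $n-1$ Jacobi fields along $\gamma_v$ vanishing at $0$, measured in the reference inner product $\langle\cdot,\cdot\rangle_{\dot\gamma_v}$. Writing the measure locally as $\textsf{m}=e^{-\psi}vol_{\dot\gamma_v}$ as in the definition of $Ric_N$, the weighted radial density along $\gamma_v$ becomes $h(t,v)=e^{-\psi(\gamma_v(t))}\sigma(t,v)$, and the goal reduces to a pointwise monotonicity statement for $h(t,v)/t^{N-1}$.

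The core of the argument is a Riccati-type inequality for $m(t):=(\log h(\cdot,v))'(t)$. Differentiating and applying the trace of the Jacobi equation \eqref{eq:57} yields the classical Riemannian estimate
\[
\left(\frac{\sigma'}{\sigma}\right)'+\frac{1}{n-1}\left(\frac{\sigma'}{\sigma}\right)^{2}\le -Ric(\dot\gamma_v).
\]
Adding $-(\psi\circ\gamma_v)''$ on both sides and redistributing $(\psi')^{2}$ via the Cauchy--Schwarz identity that is built into the very definition of $Ric_N$ consolidates the mixed terms into
\[
m'(t)+\frac{m(t)^{2}}{N-1}\le -Ric_N(\dot\gamma_v)\le 0.
\]
Combined with the initial asymptotics $h(t,v)\sim t^{n-1}$ as $t\downarrow 0$ and smoothness of $\psi$ at $x$, ODE comparison against the zero-curvature model function $t\mapsto t^{N-1}$ delivers the monotonicity $\partial_t\bigl[h(t,v)/t^{N-1}\bigr]\le 0$ on $(0,c(v))$, where $c(v)$ is the cut value along $\gamma_v$, with the usual convention $h\equiv 0$ past the cut locus.

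The volume comparison then follows from the classical Gromov lemma: if $\varphi(t)/t^{N-1}$ is non-increasing, then $\int_{0}^{R}\varphi\,dt\big/\int_{0}^{\rho}\varphi\,dt\le (R/\rho)^{N}$ for $0<\rho<R$. Applying this to $\varphi(t)=h(t,v)$ and integrating over $v\in S_xM$ yields
\[
\frac{\textsf{m}(B_R(x))}{\textsf{m}(B_\rho(x))}\le \left(\frac{R}{\rho}\right)^{N}.
\]
For the rigidity statement in the case $N=n$, global equality forces the Riccati inequality to be saturated along every geodesic; by the case distinction in the definition of $Ric_n$, this is possible only if $(\psi\circ\gamma_v)'\equiv 0$. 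The weight therefore drops out along every ray, and the residual Riemannian Bishop rigidity forces each Jacobi field vanishing at $x$ to have the stated form $J(t)=tP(t)$ with $P$ parallel along $\gamma_v$.

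The main obstacle is the passage from the Riemannian-trace Riccati estimate to one controlled by $Ric_N$, because the reference inner product $\langle\cdot,\cdot\rangle_{\dot\gamma_v}$ lives on the pull-back bundle and itself varies with $t$. One must verify that the Chern-connection curvature $R^{\dot\gamma_v}$ entering \eqref{eq:57} is compatible with this moving metric in precisely the way needed for the trace inequality, and that the Cauchy--Schwarz step distributing the Hessian of $\psi$ between the geometric $(n-1)$ and weighted $(N-n)$ directions is saturated only under the rigid condition recorded above. This is exactly the point where the Finsler weighted setting genuinely departs from the Riemannian one, and where the bookkeeping must be done with care.
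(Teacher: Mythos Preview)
The paper does not actually prove this theorem: it is quoted verbatim as Theorem~7.3 of Ohta~\cite{Oh} and used as a black box in the proofs of Theorems~\ref{eq:58}, \ref{114} and~\ref{117}. There is therefore no ``paper's own proof'' to compare your proposal against.

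That said, your outline is the standard route and is essentially what Ohta carries out in~\cite{Oh}: polar coordinates at $x$, the Riccati inequality for the weighted radial density $h(t,v)=e^{-\psi}\sigma$, the Cauchy--Schwarz redistribution that packages the $\psi$-terms into $Ric_N$, monotonicity of $h(t,v)/t^{N-1}$, and then Gromov's integral lemma. Your identification of the genuinely Finsler-specific difficulty---that the reference metric $\langle\cdot,\cdot\rangle_{\dot\gamma_v}$ depends on the direction and one must check compatibility of the Chern connection with this moving metric before the trace inequality goes through---is exactly the point Ohta handles via his $T$-curvature and Laplacian comparison machinery. So your sketch is sound as a roadmap, but since the present paper treats the result as a citation, no proof is expected here.
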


\end{subsection}

\begin{myproof}[\textbf{Proof of Theorem \ref{eq:58}}]Since $(M,F)$ is complete,
by the Hopf-Rinow theorem it yields that $(M,d_{F},\textsf{m})$ is a
proper metric measure space. On account of Theorem \ref{eq:56} we
have that the condition \eqref{eq:32} in Theorem \ref{eq:45} holds
with $C_{0}=1$. Note that the choice of constant $1$ on the right
side of \eqref{eq:33} was done for simplicity. In fact, by
\eqref{eq:32} we have that $\Lambda_{x_{0}}=\liminf_{\rho\rightarrow
0}\frac{\textsf{m}(B_{\rho}(x_{0}))}{\textsf{m}_{E}(\mathbb{B}_{\rho}(0))}$
is positive. Then we can normalize the measure $\textsf{m}$ in order
to satisfy \eqref{eq:33}.

The condition \eqref{eq:32}, implies that

\begin{equation}\frac{\textsf{m}(B_{R}(x))}{w_{n}R^{n}}\leq
\frac{\textsf{m}(B_{\rho}(x))}{w_{n}\rho^{n}}=\frac{\textsf{m}(B_{\rho}(x))}{\textsf{m}_{E}(\mathbb{B}_{\rho}(0))},\hspace{0,2cm}
0<\rho<R.\nonumber
\end{equation}
Taking $\rho\rightarrow0$, we have from 

$$\liminf_{\rho\rightarrow0}\frac{\textsf{m}(B_{\rho}(x))}{w_{n}\rho^{n}}=1$$
that $\textsf{m}(B_{R}(x))\leq w_{n}R^{n}$, $\forall x\in M$ and
$R>0$.

Since the sharp Caffarelli-Kohn-Nirenberg inequality holds, we have
from Theorem \ref{eq:45}, the reverse inequality
$\textsf{m}(B_{R}(x))\geq w_{n}R^{n}$, $\forall x\in M$ and $R>0$.
Thus, $\textsf{m}(B_{R}(x))=\textsf{m}_{E}(\mathbb{B}_{R}(0))$,
$\forall x\in M$ and $R>0$. By Theorem \ref{eq:56}, it results that
every Jacobi field $J$ along any geodesic $\tau$ has the form
$J(t)=tP(t)$, where $P$ is a parallel vector field along $\tau$.
Then it follows from the Jacobi equation \eqref{eq:57} that
$R^{\dot{\tau}}(J,\dot{\tau})\equiv0$, so that
$K(\mathcal{P},\dot{\tau})\equiv0$ with
$\mathcal{P}=span\{P,\dot{\tau}\}$. Due to the arbitrariness of
$\tau$ and $J$, it turns out that the flag curvature of $(M,F)$ is
identically zero.

\end{myproof}

\begin{myproof}[\textbf{Proof of Theorem \ref{114}}]Since $(M,F)$ is a
Berwald space, the non-negativity of the Ricci curvature on $(M,F)$
coincides with the non-negativity of the $n$-Ricci curvature on
$(M,d_{F},\textsf{m}_{BH})$, and the Busemann-Hausdorff measure
$\textsf{m}_{BH}$ satisfies the following $n$-density assumption
\begin{equation}\lim_{\rho\rightarrow0}\frac{\textsf{m}_{BH}(B_{\rho}(x))}{w_{n}\rho^{n}}=1,\nonumber
\end{equation}
see Shen \cite{Shen1} Lemma 5.2 and \cite{Oh} Theorem 1.2. Then,
applying Theorem \ref{eq:58}, we get that the flag curvature of
$(M,F)$ is identically zero. On the other hand, every Berwald space
with zero flag curvature is necessarily a locally Minkowski space,
see \cite{Bao} section 10.5. Due to the volume identity
$\textsf{m}_{BH}(B_{\rho}(x))=w_{n}\rho^{n}$, $\forall x\in M$ and
$\rho>0$, we have that $(M,F)$ must be isometric to a Minkowski
space.

\end{myproof}

\begin{myproof}[\textbf{Proof of Theorem \ref{117}}]Since $(M,F)$ is complete,
by the Hopf-Rinow theorem it yields that $(M,d_{F},\textsf{m})$ is a
proper metric measure space. The non-negativity of the flag
curvature implies that the Ricci curvature is non-negative, then in
the same way as in the proof of Theorem \ref{114}, we have that the
$n$-Ricci curvature is non-negative, and by Theorem \ref{eq:56} the
condition \eqref{eq:32} in Theorem \ref{eq:45} holds with $C_{0}=1$.
Now, since the Busemann-Hausdorff measure $\textsf{m}_{BH}$
satisfies
\begin{equation}\lim_{\rho\rightarrow0}\frac{\textsf{m}_{BH}(B_{\rho}(x))}{w_{n}\rho^{n}}=1,\nonumber
\end{equation}
we have by Theorem \ref{eq:45} that

\begin{equation}
0<\left(\frac{C_{opt}(\mathbb{R}^{n})}{C}\right)^{\frac{n}{a}}\leq
\frac{\textsf{m}_{BH}(B_{\rho}(x_{0}))}{w_{n}\rho^{n}}\leq
1,\hspace{0,2cm}\forall\rho>0.\nonumber
\end{equation}
This inequality implies that $(M,F)$ has large volume growth as
defined by Lakzian (see \cite{Lak} Definition 3.6), then by
Remark \ref{eq4}, $M$ is diffeomorphic to Euclidean space
$\mathbb{R}^{n}$.

\end{myproof}

\end{section}


\begin{section}{Proof of Theorems \ref{111} and \ref{112}}

\begin{subsection}{Geometry on Alexandrov Spaces}

\hspace{0,5cm}In this section, by completeness, we define the notion
of Alexandrov space. First, let us remember that a length space
$(X,d_{X})$ is a metric space where the distance function $d_{X}$
between two points is given by the infimum of the lengths of all the
curves connecting these two points. A triangle in $(X,d_{X})$
consists of three points $x,y,z$ and three minimal geodesics
$\overline{xy}, \overline{xz}, \overline{yz}$. Fix a real number
$\kappa\in\mathbb{R}$, a comparison triangle
$\tilde{x}\tilde{y}\tilde{z}$ is a triangle on the surface of
constant curvature $\kappa$, with the same side lengths. We denote
this comparison angles by $\widetilde{\angle}_{\kappa}xyz$,
$\widetilde{\angle}_{\kappa}yzx$ and
$\widetilde{\angle}_{\kappa}zxy$. A comparison triangle exists and
is unique whenever $\kappa\leq0$ or $\kappa>0$ and
$|xy|+|xz|+|zy|<\frac{2\pi}{\sqrt{\kappa}}$.

\begin{definition}\label{eq:63}An Length Space $X$ is called an
Alexandrov space of curvature $\geq \kappa$ if any $x_{0}\in X$ has
a neighborhood $U_{x_{0}}$, such that for any $x,y,z,w\in U_{x_{0}}$

\begin{equation}\widetilde{\angle}_{\kappa}yxz+\widetilde{\angle}_{\kappa}zxw+\widetilde{\angle}_{\kappa}wxy\leq2\pi.\nonumber
\end{equation}
\end{definition}

For locally compact spaces this is equivalent to the more familiar
Alexandrov-Toponogov distance comparison.

\begin{definition}\label{eq:62}(Toponogov-Alexandrov)A locally compact Length Space $X$
is called an Alexandrov space of curvature $\geq \kappa$ if any
$x_{0}\in X$ has a neighborhood $U_{x_{0}}$, such that for any
triangle $xyz$ in $U_{x_{0}}$ and any $y_{1}\in\overline{xy}$,
$z_{1}\in\overline{xz}$, we have
$|y_{1}z_{1}|\geq|\widetilde{y_{1}}\widetilde{z_{1}}|$, where
$\widetilde{y_{1}}$ and $\widetilde{z_{1}}$ are the corresponding
points on the sides $\widetilde{x}\widetilde{y}$ and
$\widetilde{x}\widetilde{z}$ of the comparison triangle
$\widetilde{x}\widetilde{y}\widetilde{z}$.
\end{definition}

\begin{observation}If $X$ is complete, the local condition in the above definitions implies a global condition.
\end{observation}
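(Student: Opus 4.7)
The plan is to establish the globalization assertion by a subdivision and hinge-comparison argument of Toponogov-Burago-Gromov-Perelman type, working with the Toponogov formulation of Definition \ref{eq:62} (the local $4$-point condition of Definition \ref{eq:63} is known to be equivalent to the local Toponogov condition in locally compact length spaces, so it suffices to treat one of them). First, I would fix an arbitrary geodesic triangle $\Delta = xyz$ in $X$ with sides realized by minimizing geodesics; these exist by completeness and local compactness through the Hopf-Rinow theorem for length spaces. Each side is compact, hence can be covered by finitely many comparison neighborhoods supplied by the local hypothesis.

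The core step is a finite subdivision combined with hinge comparison in the model plane. I would insert auxiliary vertices $x = x_0, x_1, \ldots, x_N = y$ along $\overline{xy}$ (and analogously on the other sides, if necessary) so that each sub-triangle $x_i x_{i+1} z$ lies entirely inside a single comparison neighborhood, which is possible because compactness makes the cover finite and allows a positive Lebesgue number. Applying the local Toponogov inequality to each sub-triangle yields comparison angle bounds at $z$ and at each intermediate vertex. These local bounds are then assembled in the model plane $M_\kappa^2$ of constant curvature $\kappa$ via the elementary hinge lemma: concatenating two model triangles sharing a side produces a new model triangle whose comparison angle at the shared vertex is at most the sum of the two original comparison angles. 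Iterating this concatenation along the entire subdivision produces a model triangle for $\Delta$ realizing the global Toponogov comparison.

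To close the argument, I would run a standard closed-open continuation: the set of pairs of geodesics (parameterized by their endpoints) for which the global comparison holds is open by the local hypothesis together with stability of the subdivision, closed by continuity of the distance function and of the model comparison angle $\widetilde{\angle}_{\kappa}$, and nonempty since short geodesics trivially belong to it. By connectedness, it equals the full space of geodesic configurations in $X$, which is exactly the global version of the definition.

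The main obstacle will be the careful control of the iterated hinge comparison: one must ensure that the cumulative comparison angles in $M_\kappa^2$ stay within the admissible range (for $\kappa>0$ this produces a perimeter restriction, but for the non-negative curvature case used later in the paper the restriction is vacuous), and one must verify a monotonicity lemma guaranteeing that each concatenation can only decrease model distances. These technical points constitute the nontrivial heart of the Burago-Gromov-Perelman globalization theorem and are where the completeness hypothesis is genuinely needed, since the subdivision procedure requires minimizers between arbitrary pairs of points.
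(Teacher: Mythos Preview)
The paper does not actually prove this observation; it is stated as a remark without any accompanying argument, simply recording the well-known globalization theorem for Alexandrov spaces (due to Burago--Gromov--Perelman) as background. There is therefore no ``paper's own proof'' to compare against.

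Your outline is a faithful sketch of the standard globalization argument and is correct in spirit: the subdivision of a large triangle into small ones lying inside comparison neighborhoods, the use of the hinge/Alexandrov lemma to concatenate model triangles in $M_\kappa^2$, and the open--closed continuation along the space of configurations are exactly the ingredients of the Burago--Gromov--Perelman proof (see, e.g., Burago--Burago--Ivanov, \emph{A Course in Metric Geometry}, Theorem~10.3.1 and the surrounding discussion, which the paper already cites as \cite{Bura}). Two small cautions if you intend to flesh this out into a full proof: first, the iterated concatenation step is genuinely delicate and requires the monotonicity of comparison angles along geodesics (the Alexandrov lemma) rather than a naive summation of angles; second, for $\kappa>0$ the perimeter bound $|xy|+|yz|+|zx|<2\pi/\sqrt{\kappa}$ must be tracked through the subdivision and is part of the conclusion, not merely a side condition. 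But as a high-level plan your proposal is sound, and it goes well beyond what the paper itself supplies, which is nothing.
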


Similar to the Bishop-Gromov comparison theorem in Riemannian
manifolds, there is an extension for Alexandrov spaces. The next
result can be found in \cite{Bura}(see theorem $10.6.6$).

\begin{theorem}\label{eq:59}{(Bishop-Gromov Inequality)} Let $X$ be an Locally compact $n$-dimensional Alexandrov space of curvature $\geq \kappa$. Then for any $x\in
X$ the function
\begin{equation}\rho\rightarrow\frac{\mathcal{H}^{n}(B_{\rho}(x))}{V_{\rho}^{\kappa}}\nonumber
\end{equation}
is not increasing, where $\mathcal{H}^{n}(B_{\rho}(x))$ is the
$n$-dimensional Hausdorff measure of the ball of radius $\rho$ and
center $x$ in space form $M_{\kappa}^{n}$. That is, if $R\geq
\rho>0$, then
\begin{equation}\frac{\mathcal{H}^{n}(B_{R}(x))}{V_{R}^{\kappa}}\leq\frac{\mathcal{H}^{n}(B_{\rho}(x))}{V_{r}^{\kappa}}. \nonumber
\end{equation}
\end{theorem}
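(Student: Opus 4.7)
The plan is to reduce the monotonicity of $\rho \mapsto \mathcal{H}^n(B_\rho(x))/V_\rho^\kappa$ to a shell-by-shell comparison. If $\mathcal{A}(t)$ denotes the $\mathcal{H}^{n-1}$-content of the metric sphere $\{y : d(x,y)=t\}$ and $A_t^\kappa$ the surface area of the corresponding model sphere in $M^n_\kappa$, then the coarea-type identities
$$\mathcal{H}^n(B_\rho(x)) = \int_0^\rho \mathcal{A}(t)\,dt, \qquad V_\rho^\kappa = \int_0^\rho A_t^\kappa\,dt$$
show that it is enough to prove that $\mathcal{A}(t)/A_t^\kappa$ is non-increasing in $t$, since an elementary computation then gives the monotonicity of the ratio of the integrals.

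The heart of the argument is a Toponogov-based distortion estimate for the radial projection. For $0 < s < t$, define $\pi_{t\to s} : \partial B_t(x) \to \partial B_s(x)$ by following a minimizing geodesic from $x$ back to distance $s$. I would take two points $y_1, y_2 \in \partial B_t(x)$ and apply the distance comparison of Definition \ref{eq:62} to the triangle $xy_1y_2$: placing a comparison triangle $\tilde x\tilde y_1 \tilde y_2$ in $M_\kappa^n$ and noting that $\pi_{t\to s}(y_i)$ corresponds to the point on $\tilde x\tilde y_i$ at model distance $s$ from $\tilde x$, one obtains
$$d(\pi_{t\to s}(y_1), \pi_{t\to s}(y_2)) \;\leq\; \frac{s_\kappa(s)}{s_\kappa(t)}\, d(y_1, y_2) \;+\; o\bigl(d(y_1,y_2)\bigr),$$
where $s_\kappa$ is the model sine function (so $A_t^\kappa$ is a constant multiple of $s_\kappa(t)^{n-1}$). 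A standard Hausdorff-measure covering argument then yields $\mathcal{A}(s) \geq (s_\kappa(s)/s_\kappa(t))^{n-1}\mathcal{A}(t)$, which is exactly the desired monotonicity of $\mathcal{A}(t)/A_t^\kappa$.

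The main obstacle is technical rather than conceptual: Alexandrov spaces admit singular points, so $\partial B_t(x)$ need not be an $(n-1)$-manifold, minimizing geodesics from $x$ need not be unique, and $\pi_{t\to s}$ is at best defined almost everywhere. Making the estimate above rigorous requires (i) the Toponogov globalization theorem, which upgrades the local quadruple-angle condition in Definition \ref{eq:63} to a global distance comparison of the form in Definition \ref{eq:62}; (ii) the Otsu--Shioya-type result that the singular set of $X$ has vanishing $\mathcal{H}^n$-measure, so that the radial parametrization is well-behaved; and (iii) care with the measure-theoretic meaning of $\mathcal{A}(t)$ on non-smooth spheres and with the justification of the coarea identity. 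These background results occupy a substantial part of Burago--Burago--Ivanov, which is why the authors simply quote the inequality from \cite{Bura}.
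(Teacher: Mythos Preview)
The paper does not prove this theorem; it simply quotes it from Burago--Burago--Ivanov (Theorem~10.6.6 there), as you yourself note in your last paragraph. So there is no in-paper argument to compare against, and the relevant question is whether your sketch is sound.

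The overall architecture (coarea reduction to spherical shells, radial projection, Toponogov) is the right one, but the key inequality is stated with the wrong sign. For a space of curvature $\geq\kappa$, Definition~\ref{eq:62} says that points on the \emph{sides} of a geodesic triangle are \emph{at least} as far apart as their counterparts in the model triangle. Applied to the triangle $xy_1y_2$ with $y_i\in\partial B_t(x)$ and the side points $\pi_{t\to s}(y_i)\in\partial B_s(x)$, this yields
\[
d\bigl(\pi_{t\to s}(y_1),\pi_{t\to s}(y_2)\bigr)\;\geq\;\frac{s_\kappa(s)}{s_\kappa(t)}\,d(y_1,y_2)\,-\,o\bigl(d(y_1,y_2)\bigr),
\]
not the $\leq$ you wrote (your inequality is the CAT$(\kappa)$ estimate, i.e.\ the one for \emph{upper} curvature bounds). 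With your stated upper-Lipschitz bound one would only get $\mathcal{H}^{n-1}\bigl(\pi_{t\to s}(\partial B_t)\bigr)\leq(s_\kappa(s)/s_\kappa(t))^{n-1}\mathcal{A}(t)$, which gives no lower bound on $\mathcal{A}(s)$; so the ``standard covering argument'' does not produce the inequality you claim.

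With the correct (co-Lipschitz) direction, however, the argument does go through: the lower bound makes $\pi_{t\to s}$ locally injective and its inverse $(s_\kappa(t)/s_\kappa(s))$-Lipschitz on the image, whence
\[
\mathcal{A}(t)=\mathcal{H}^{n-1}(\partial B_t)\;\leq\;\Bigl(\tfrac{s_\kappa(t)}{s_\kappa(s)}\Bigr)^{n-1}\mathcal{H}^{n-1}\bigl(\pi_{t\to s}(\partial B_t)\bigr)\;\leq\;\Bigl(\tfrac{s_\kappa(t)}{s_\kappa(s)}\Bigr)^{n-1}\mathcal{A}(s),
\]
which is exactly $\mathcal{A}(s)\geq(s_\kappa(s)/s_\kappa(t))^{n-1}\mathcal{A}(t)$. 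The technical caveats you list (globalization, a.e.\ uniqueness of geodesics, coarea on non-smooth spheres) are then the genuine work, and are indeed the content of the cited chapter in \cite{Bura}.
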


Next, Kuwae et al in \cite{Kuw} define the concept of
\textit{Infinitesimal Bishop-Gromov inequality} for Alexandrov
spaces as follows. For a real number $\kappa$, consider

$$\begin{displaystyle}
  s_{\kappa}(\rho) =
  \begin{cases}
    \frac{\sin(\sqrt{\kappa}\rho)}{\sqrt{\kappa}}, & \text{if } \kappa>0 \\
    \rho, & \text{if } \kappa=0 \\
    \frac{\sinh(\sqrt{|\kappa|}\rho)}{\sqrt{|\kappa|}}, & \text{if }
    \kappa<0
  \end{cases}
\end{displaystyle}$$
observe that the function $s_{\kappa}$ is a solution of Jacobi
equation $s_{\kappa}''(\rho)+\kappa s_{\kappa}'(\rho)=0$ with
initial conditions $s_{\kappa}(0)=0$ and $s_{\kappa}'(0)=1$. Let
$d_{x_{0}}(x):=d(x_{0},x)$, where $x_{0},x\in X$ and $d$ is the
distance function. For $x_{0}\in X$ and $0<t\leq1$, we define the
set $W_{x_{0},t}\subset X$ and the map
$\Phi_{x_{0},t}:W_{x_{0},t}\rightarrow X$ as following: First, put
$\Psi_{x_{0},t}(x_{0})=x_{0}\in W_{x_{0},t}$. A point $x(\neq
x_{0})$ belongs to $W_{x_{0},t}$ if, and only if, there exists $y\in
X$ such that $x\in x_{0}y$ and $d_{x_{0}}(x):d_{x_{0}}(y)=t:1$,
where $x_{0}y$ is a minimal geodesic connecting $x_{0}$ to $y$.
Since a geodesic does not branch on an Alexandrov space, for a given
point $x\in W_{x_{0},t}$ such a point $y$ is unique and we set
$\Psi_{x_{0},t}(x)=y$. Now we are in a position to define the notion
of infinitesimal Bishop-Gromov inequality.

\begin{definition}Given a real numbers $n\geq1$ and $\kappa$, we say that the $n$-dimensional Hausdorff measure $\mathcal{H}^{n}$ satisfies
the Bishop-Gromov infinitesimal inequality $BG(\kappa,n)$ if for any
$x_{0}\in X$ and $t\in(0,1]$ we have
\begin{equation}d(\Psi_{x_{0},t\ast}\mathcal{H}^{n})(x)\leq\frac{ts_{\kappa}(td_{x_{0}}(x))^{n-1}}{s_{\kappa}(d_{x_{0}}(x))^{n-1}}d\mathcal{H}^{n}(x)\nonumber
\end{equation}
for all $x\in X$ such that $d_{x_{0}}(x)<\frac{\pi}{\sqrt{\kappa}}$
if $\kappa>0$, where $\Psi_{x_{0},t\ast}\mathcal{H}^{n}$ is the
push-forward of $\mathcal{H}^{n}$ by $\Psi_{x_{0},t}$.
\end{definition}

$BG(\kappa,n)$ is sometimes called the measure contraction
property(see \cite{Oh00, Kuw, Sturm, Sturm1} )and is weaker than the
curvature dimension(or lower $n$-Ricci curvature)condition
$CD((n-1)\kappa,n)$ introduced by Sturm \cite{Sturm, Sturm1}.

In \cite{Kuw}, the authors show that

\begin{theorem}\label{eq:60}Let $X$ be an $n$-dimensional Alexandrov space of curvature $\geq \kappa$. Then, the $n$-dimensional
Hausdorff measure $\mathcal{H}^{n}$ on $X$ satisfies the
infinitesimal Bishop-Gromov condition $BG(\kappa,n)$.
\end{theorem}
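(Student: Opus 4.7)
The plan is to establish the infinitesimal Bishop-Gromov inequality $BG(\kappa,n)$ for the Hausdorff measure on $X$ by carrying out a Jacobian computation for the contraction $\Psi_{x_{0},t}$ along minimal geodesics issuing from $x_{0}$, with the Jacobian bound coming from Alexandrov (Toponogov) comparison. First I would observe that $\Psi_{x_{0},t}$ is well defined $\mathcal{H}^{n}$-almost everywhere: the non-branching property of geodesics in an Alexandrov space guarantees that, off a set of measure zero, every $x\in X\setminus\{x_{0}\}$ lies on a unique minimal geodesic through $x_{0}$, so the map is single-valued. The claimed inequality $d(\Psi_{x_{0},t\ast}\mathcal{H}^{n})(x) \leq \frac{t\,s_{\kappa}(td_{x_{0}}(x))^{n-1}}{s_{\kappa}(d_{x_{0}}(x))^{n-1}}\,d\mathcal{H}^{n}(x)$ is then a statement about the Jacobian of $\Psi_{x_{0},t}$ evaluated at the target point $x$, and the strategy is to produce this bound radially.

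Next I would reduce to the regular set $X^{\mathrm{reg}}$ of points at which the tangent cone is isometric to Euclidean $\mathbb{R}^{n}$; by the Burago-Gromov-Perelman structure theory, $X^{\mathrm{reg}}$ is dense and has full $\mathcal{H}^{n}$-measure. On $X^{\mathrm{reg}}$ one can introduce geodesic polar coordinates centered at $x_{0}$, and the Hausdorff measure disintegrates (Otsu-Shioya) as $d\mathcal{H}^{n} = \mathcal{A}(\rho,\xi)\,d\rho\,d\mu_{\Sigma_{x_{0}}}(\xi)$, where $\Sigma_{x_{0}}$ is the space of directions at $x_{0}$ and $\rho = d_{x_{0}}$. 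The function $\mathcal{A}(\rho,\xi)$ plays the role of the classical Jacobi determinant along the radial direction $\xi$.

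The core step is the infinitesimal area comparison: the curvature lower bound $\geq \kappa$ forces, along each radial direction $\xi$, the monotonicity statement that $\rho \mapsto \mathcal{A}(\rho,\xi)/s_{\kappa}(\rho)^{n-1}$ is non-increasing. This is the infinitesimal form of Theorem \ref{eq:59}, and indeed the integral statement of \ref{eq:59} follows from it by integration in $\rho$ and $\xi$. Once this monotonicity is in hand, evaluating the ratio at $\rho = d_{x_{0}}(x)$ versus $\rho = td_{x_{0}}(x)$ and applying the standard change-of-variables formula for the push-forward $\Psi_{x_{0},t\ast}\mathcal{H}^{n}$ yields precisely the Jacobian bound that defines $BG(\kappa,n)$, with the extra factor $t$ coming from the radial rescaling $d\rho \mapsto t\,d\rho$.

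The hard part is the infinitesimal comparison itself: without a smooth connection or curvature tensor there is no direct Rauch/Jacobi-field argument. The resolution is to invoke Perelman's DC-calculus on $X^{\mathrm{reg}}$, which shows that $\rho \mapsto \mathcal{A}(\rho,\xi)$ is locally Lipschitz in $\rho$ and that the monotonicity of $\mathcal{A}(\rho,\xi)/s_{\kappa}(\rho)^{n-1}$ holds almost everywhere, after which a density argument upgrades the statement to hold $\mathcal{H}^{n}$-a.e. as required by the $BG(\kappa,n)$ definition. An alternative, more elementary but combinatorially heavier route would avoid the DC-calculus by estimating the volumes of thin conical sectors of opening $\xi$ between radii $t\rho$ and $\rho$, passing the Toponogov comparison through these sectors and then letting the opening shrink; the technical burden in either case lies in handling the non-smoothness uniformly along each geodesic, which is the genuine difficulty of the theorem.
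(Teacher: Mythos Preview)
The paper does not prove this theorem at all: it is quoted as a result of Kuwae--Shioya, introduced with ``In \cite{Kuw}, the authors show that'' and then used as a black box in the proof of Theorem~\ref{112}. So there is no proof in the paper to compare against; your proposal supplies content where the paper simply cites the literature.

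As a sketch of the Kuwae--Shioya argument, your outline is broadly in the right spirit (polar disintegration of $\mathcal{H}^{n}$ via Otsu--Shioya, a radial area-density $\mathcal{A}(\rho,\xi)$, and monotonicity of $\mathcal{A}(\rho,\xi)/s_{\kappa}(\rho)^{n-1}$ derived from Toponogov comparison). Two comments are worth making. First, be careful with the direction of the map: in the paper's convention $\Psi_{x_{0},t}$ sends $x$ at radius $r$ to the point $y$ at radius $r/t$ along the same geodesic (an \emph{expansion}, not a contraction), and the domain $W_{x_{0},t}$ is precisely the set of points whose geodesic from $x_{0}$ extends that far; your opening sentence speaks of ``the contraction $\Psi_{x_{0},t}$'' and frames the a.e.\ issue as uniqueness of minimal geodesics, whereas the actual measure-theoretic issue is that $W_{x_{0},t}$ has full measure (extendability past the cut locus). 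Second, the ``hard part'' you identify---getting the infinitesimal monotonicity without smooth Jacobi fields---is exactly what the cited paper handles, but invoking Perelman's DC-calculus is heavier machinery than Kuwae--Shioya actually use; their argument is closer to your ``alternative'' route via thin-sector volume estimates and Toponogov. None of this is wrong, but since the present paper treats the result as a citation, the appropriate response here is simply to cite \cite{Kuw} rather than reprove it.
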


Let us denote by $Alex^{n}[\kappa]$ the class of $n$-dimensional
Alexandrov space of curvature $\geq \kappa$. In \cite{Munn} see
Theorem $3.2$, the authors prove the following

\begin{theorem}\label{eq:61}For an integer $n\geq2$, let $(X,d)\in
Alex^{n}[-\kappa^{2}]$, $\kappa\in\mathbb{R}$ be a complete
non-compact Alexandrov space whose Hausdorff measure
$\mathcal{H}^{n}$ satisfies the $BG(0,n)$ condition. There exists an
$\epsilon(n,k)=\epsilon>0$ such that, if $x\in X$
\begin{equation}\mathcal{H}^{n}(B_{\rho}(x))\geq(1-\epsilon)w_{n}\rho^{n},\hspace{0,2cm}
\forall \rho>0.\nonumber
\end{equation}
Then (X,d) has finite topological type.
\end{theorem}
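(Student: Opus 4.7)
The plan is to establish finite topological type via critical point theory for the distance function $d_{x_0}(\cdot):=d(x_0,\cdot)$ from a fixed base point $x_0\in X$, in the spirit of Grove--Shiohama as adapted to Alexandrov spaces by Perelman and Petrunin. A point $y\neq x_0$ is \emph{critical} for $d_{x_0}$ if every direction $\xi\in\Sigma_y$ in the space of directions lies within angle $\pi/2$ of the direction of some minimizing segment from $y$ back to $x_0$. The central claim is that there exists $\epsilon=\epsilon(n,\kappa)>0$ such that, under the large volume growth hypothesis with this $\epsilon$, all critical points of $d_{x_0}$ are contained in a ball $B_R(x_0)$ of finite radius $R=R(n,\kappa,\epsilon)$. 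Granting this, the Alexandrov-space isotopy/fibration lemma of Perelman asserts that $d_{x_0}$ induces a topological fiber bundle over $(R,\infty)$, so that $X$ is homotopy equivalent to the compact sublevel set $\overline{B_R(x_0)}$; finite topological type of the latter follows from the finite stratification of compact Alexandrov regions.

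To prove the critical-point confinement, I would argue by contradiction. Suppose there is a sequence $y_k\in X$ of critical points with $r_k:=d(x_0,y_k)\to\infty$. Criticality at $y_k$, together with Toponogov's comparison in $Alex^n[-\kappa^2]$, produces a family of minimizing segments from $y_k$ to $x_0$ whose directions form a $\tfrac{\pi}{2}$-net in $\Sigma_{y_k}$ with mutually large comparison angles. On the other hand, the hypothesis $\mathcal{H}^n(B_\rho(x))\ge(1-\epsilon)w_n\rho^n$, combined with the $BG(0,n)$ infinitesimal inequality of Theorem \ref{eq:60} (using $s_0(\rho)=\rho$, which yields the Euclidean upper bound $\mathcal{H}^n(B_\rho(x))\le w_n\rho^n$), forces the volume density of $\mathcal{H}^n$ to lie within $\epsilon$ of its maximal Euclidean value at every point of $X$ and every scale. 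A rigidity argument of Abresch--Gromoll/Cheeger--Colding type, carried out in the Alexandrov category using the measure-contraction inequality $BG(0,n)$ and Toponogov, then shows that for $\epsilon<\epsilon_0(n,\kappa)$ small enough the existence of a $\tfrac{\pi}{2}$-net of minimizing directions at $y_k$ forces a strictly positive deficit in the volume density of a suitably chosen ball near $y_k$, contradicting near-maximality for $r_k$ large.

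The main obstacle is making this rigidity step quantitatively effective: isolating an explicit threshold $\epsilon_0(n,\kappa)$ for which near-maximal volume density excludes a critical configuration at large distance. A pure Gromov-precompactness and limiting argument in $Alex^n[-\kappa^2]$ would produce a non-effective $\epsilon_0$ by contradiction (any limit space would, by the measure-contraction rigidity, split off a line and flatten to $\mathbb{R}^n$, precluding the packing of $\tfrac{\pi}{2}$-separated directions at the limit point), whereas a constructive bound requires integrating the comparison function $s_\kappa$ in $BG(0,n)$ against the Toponogov packing of minimizing directions at $y_k$ and combining them to produce the volume deficit. Once this threshold is isolated, the deformation-retraction onto $\overline{B_R(x_0)}$ proceeds by a standard Perelman gradient-exponential flow of $-d_{x_0}$ on the complement of the critical set, completing the proof.
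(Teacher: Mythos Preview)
The paper does not contain a proof of this statement at all: Theorem~\ref{eq:61} is quoted verbatim from \cite{Munn} (Theorem~3.2 there) and is used as a black box in the proof of Theorem~\ref{112}. So there is no ``paper's own proof'' to compare your proposal against.

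That said, your outline is in the right spirit and matches the strategy of \cite{Munn}: one shows that under almost-maximal volume growth the distance function $d_{x_0}$ has no critical points (in the Grove--Shiohama/Perelman sense) outside a compact set, and then invokes Perelman's fibration/isotopy lemma to conclude finite topological type. Two remarks on your sketch. First, the role of the lower curvature bound $-\kappa^2$ is not merely to enable Toponogov comparison but to control how comparison angles at scale $r_k\to\infty$ degrade; in Munn's argument the volume hypothesis together with $BG(0,n)$ is what produces the effective packing contradiction, and the dependence of $\epsilon$ on $\kappa$ enters through the hinge comparison at large radius. Second, your ``main obstacle'' paragraph is honest but also the entire content of the theorem: the non-effective limiting argument you describe (Gromov compactness, splitting, contradiction) is essentially how \cite{Munn} proceeds, so you should not expect to extract an explicit $\epsilon_0(n,\kappa)$, and you need not try. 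If you want a self-contained account rather than a citation, the cleanest route is to follow \cite{Munn} directly rather than to redo the Abresch--Gromoll style estimate from scratch.
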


\begin{myproof}[\textbf{Proof of Theorem \ref{111}}]Since $X$ is Locally compact and complete, we have that closed and bounded subset of $X$ are compact,
Thus $X$ is a proper space. Now, since $X$ has curvature $\geq0$, we
can apply Theorem \ref{eq:59} to get
\begin{equation}\frac{\lambda\mathcal{H}^{n}(B_{R}(x))}{\lambda\mathcal{H}^{n}(B_{\rho}(x))}=\frac{\mathcal{H}^{n}(B_{R}(x))}{\mathcal{H}^{n}(B_{\rho}(x))}\leq\frac{R^{n}}{\rho^{n}},\hspace{0,2cm} x\in X,\hspace{0,2cm}0<\rho<R. \nonumber
\end{equation}
Thus, the condition \eqref{eq:32} of Theorem \ref{eq:45} is
satisfied with $C_{0}=1$.

By Lemma $3.2$ of \cite{Shi}, we get
\begin{equation}\lim_{\rho\rightarrow0}\frac{\mathcal{H}^{n}(B_{\rho}(x_{0}))}{\rho^{n}}=\mathcal{H}^{n}(B_{1}(o_{x_{0}})).\nonumber
\end{equation}
Thus,
\begin{equation}\liminf_{\rho\rightarrow0}\frac{\lambda\mathcal{H}^{n}(B_{\rho}(x_{0}))}{w_{n}\rho^{n}}=\lambda\frac{\mathcal{H}^{n}(B_{1}(0_{x_{0}}))}{w_{n}}=1.\nonumber
\end{equation}
Applying Theorem \ref{eq:45}, we obtain

\begin{equation}
w_{n}\rho^{n}\leq \lambda\mathcal{H}^{n}(B_{\rho}(x))\leq
w_{n}\rho^{n},\hspace{0,2cm}\forall \rho>0.\nonumber
\end{equation}
Hence $\lambda\mathcal{H}^{n}(B_{\rho}(x))=w_{n}\rho^{n}$, $\forall
\rho>0$. This implies that $X$ is isometric to Euclidean space.
\end{myproof}

\begin{myproof}[\textbf{Proof of Theorem \ref{112}}]Consider $\epsilon>0$ given by Theorem
\ref{eq:61}. Since the function $\phi:[0,1]\rightarrow\mathbb{R}$
defined by
\begin{equation}
\phi(x):=\left(\frac{C_{opt}(\mathbb{R}^{n})}{C_{opt}(\mathbb{R}^{n})+x}\right)^{\frac{n}{a}}
\nonumber
\end{equation}
converge to $1$ when $x\rightarrow0$, we have that there exist a
$\delta>0$ such that
\begin{equation}
0<x\leq\delta\Longrightarrow1-\epsilon\leq\left(\frac{C_{opt}(\mathbb{R}^{n})}{C_{opt}(\mathbb{R}^{n})+x}\right)^{\frac{n}{a}}.
\nonumber
\end{equation}
Then, applying Theorems \ref{eq:45}, \ref{eq:60} and
\ref{eq:61} we get the result.

\end{myproof}

\end{subsection}

\end{section}

\vskip0.8cm
\noindent
{Willian Isao Tokura } (e-mail: williamisaotokura@hotmail.com)\\[2pt]
Instituto de Matem\'atica e Estat\'istica\\
Universidade Federal de Goi\'as\\
74001-900-Goi\^ania-GO\\
Brazil\\

\noindent{Levi Adriano } (e-mail: levi@ufg.br)\\[2pt]
Instituto de Matem\'atica e Estat\'istica\\
Universidade Federal de Goi\'as\\
74001-900-Goi\^ania-GO\\
Brazil\\

\noindent{Changyu Xia} (e-mail: xia@mat.unb.br)\\[2pt]
Departamento de Matem\'atica\\
Universidade de Brasilia\\
70910-900-Brasilia-DF\\
Brazil


\begin{thebibliography}{X}
\bibitem{Adr}ADRIANO, Levi; XIA, Changyu. Sobolev type inequalities on Riemannian manifolds. \emph{Journal of Mathematical Analysis and Applications}. \textbf{371} (2010), 372--383.

\bibitem{Adr2}ADRIANO, Levi; XIA, Changyu. Hardy type inequalities on complete Riemannian manifolds. \emph{Monatshefte für Mathematik}. \textbf{163} (2011), 115--129.

\bibitem{Aubi} AUBIN, Thierry. Some nonlinear problems in Riemannian geometry. \emph{Springer Science, Business and Media}. 2013.


manifolds. \emph{Journal of Mathematical Analysis and Applications}.
\textbf{349} (2009), 493--502.

\bibitem{Bao}BAO, D.; CHERN, S. S.; SHEN, Z. An Introduction to Riemannian Finsler Geom. \emph{Graduate Texts in Math.} \textbf{200}, 2000.

\bibitem{Bura}BURAGO, Dmitri; BURAGO, Yuri; IVANOV, Sergei. A course in metric geometry. \emph{Providence: American Mathematical Society}. 2001.

\bibitem{Caff} CAFFARELLI, Luis; KOHN, Robert; NIRENBERG, Louis. First order interpolation inequalities with weights. \emph{Compositio Mathematica}. \textbf{53} (1984), 259--275.

\bibitem{Chavel}CHAVEL, Isaac. Riemannian geometry: a modern introduction. \emph{Cambridge university press}. 2006.

\bibitem{Chee}CHEEGER, Jeff et al. On the structure of spaces with Ricci curvature bounded below. I. \emph{Journal of Differential Geometry}. \textbf{46} (1997), 406--480.

\bibitem{doCarmo}DO CARMO, Manfredo Perdigão; XIA, Changyu. Complete manifolds with non-negative Ricci curvature and the Caffarelli-Kohn-Nirenberg inequalities. \emph{Compositio Mathematica}. \textbf{140} (2004), 818--826.



\bibitem{Heb}HEBEY, Emmanuel. Nonlinear analysis on manifolds: Sobolev spaces and inequalities. \emph{American Mathematical Soc.} 2000.

\bibitem{Kell}KELL, Martin. A note on non-negatively curved Berwald spaces. arXiv preprint arXiv:1502.03764, 2015.

\bibitem{Kris}KRISTÁLY, Alexandru; OHTA, Shin-ichi. Caffarelli-Kohn-Nirenberg inequality on metric measure spaces with applications. \emph{Mathematische Annalen}. \textbf{357} (2013), 711--726.

\bibitem{Kris1}KRISTÁLY, Alexandru. Metric measure spaces supporting Gagliardo-Nirenberg inequalities: volume non-collapsing and rigidities. \emph{Calculus of Variations and Partial Differential Equations}. \textbf{55} paper n.o 112 (2016), 27pp.

\bibitem{Kuw}KUWAE, Kazuhiro; SHIOYA, Takashi. Infinitesimal Bishop-Gromov condition for Alexandrov spaces. \emph{Adv. Stud. Pure Math.} \textbf{57} (2010), 293--302.

\bibitem{lam}LAM, Nguyen; LU, Guozhen. Sharp constants and optimizers for a class of the Caffarelli-Kohn-Nirenberg inequalities.\emph{ Advanced Nonlinear Studies, 0(0), pp. -. Retrieved 14 Jun. 2017, from doi:10.1515/ans-2017-0012}, 2017.

\bibitem{Led}LEDOUX, M. On manifolds with non-negative Ricci curvature and Sobolev inequalities. \emph{Comm. Anal. Geom.} \textbf{7} (1999), 347--353.

\bibitem{Lott}LOTT, John; VILLANI, Cédric. Ricci curvature for metric-measure spaces via optimal transport. Annals of Mathematics, p. 903-991, 2009.

\bibitem{Munn}MUNN, Michael. Alexandrov spaces with large volume growth. \emph{Journal of Mathematical Analysis and Applications}. \textbf{419} (2014), 525--540.



\bibitem{Oh}OHTA, Shin-ichi. Finsler interpolation inequalities. \emph{Calculus of Variations and Partial Differential Equations} \textbf{36} (2009), 211--249.

\bibitem{Oh00}OHTA, Shin-ichi. On the measure contraction property of metric measure spaces. Commentarii Mathematici Helvetici, v. 82, n. 4, p. 805-828, 2007.



\bibitem{Rig} PIGOLA, Stefano; RIGOLI, Marco; SETTI, Alberto G. Vanishing and finiteness results in geometric analysis: a generalization of the Bochner technique. \emph{Springer Science and Business Media}. 2008.

\bibitem{Lak}LAKZIAN, Sajjad. On closed geodesics in non-negatively curved Finsler structures with large volume growth. arXiv preprint arXiv:1408.0214, 2014.

\bibitem{Schoen} SCHOEN, Richard; YAU, S. Lectures on differential geometry vol. 1 of Conference Proceedings and Lecture Notes in Geometry and Topology. International Press, 1994.

\bibitem{Shen1}SHEN, Zhongmin. Volume comparison and its applications in Riemann-Finsler geometry. \emph{Advances in Mathematics}. \textbf{128} (1997), 306--328.

\bibitem{Shen}SHEN, Zhongmin. Lectures on Finsler geometry. World Scientific, 2001.

\bibitem{Shi}SHIOYA, Takashi. Mass of rays in Alexandrov spaces of nonnegative curvature. \emph{Commentarii Mathematici Helvetici}. \textbf{69} (1994), 208--228.

\bibitem{Sturm}STURM, Karl-Theodor. On the geometry of metric measure spaces I. Acta mathematica, v. 196, n. 1, p. 65-131, 2006.

\bibitem{Sturm1}STURM, Karl-Theodor. On the geometry of metric measure spaces II. Acta mathematica, v. 196, n. 1, p. 133-177, 2006.

\bibitem{Xia4}XIA, Changyu. The Caffarelli-Kohn-Nirenberg inequalities on complete manifolds. \emph{Mathematical Research Letters}. \textbf{14} (2007), 875--885.

\bibitem{Xia1}XIA, Changyu et al. Complete manifolds with non-negative Ricci curvature and almost best Sobolev constant. \emph{Illinois Journal of Mathematics}. \textbf{45} (2001), 1253--1259.

\bibitem{Xia}XIA, Changyu. The Gagliardo-Nirenberg inequalities and manifolds of non-negative Ricci curvature. \emph{Journal of Functional Analysis} \textbf{224} (2005), 230--241.

\bibitem{Zhu}ZHU, Shun-Hui. A volume comparison theorem for manifolds with asymptotically nonnegative curvature and its applications. \emph{American Journal of Mathematics}. \textbf{116} (1994), 669--682.

\end{thebibliography}
\end{document}